\documentclass[11pt]{article}
\usepackage{amsmath,amsfonts,amssymb,amsthm,enumerate,graphicx,xcolor,url}
\usepackage{mathtools}
\usepackage{tikz,authblk}
\usepackage{subfig}
\DeclarePairedDelimiter{\ceil}{\lceil}{\rceil}
\DeclarePairedDelimiter{\floor}{\lfloor}{\rfloor}
\newtheorem{theorem} {{\textsf{Theorem}}}[section]
\newtheorem{proposition}[theorem]{{\textsf{Proposition}}}
\newtheorem{corollary}[theorem]{{\textsf{Corollary}}}

\newtheorem{definition}[theorem]{{\textsf{Definition}}}
\newtheorem{remark}[theorem]{{\textsf{Remark}}}

\newtheorem{lemma}[theorem]{{\textsf{Lemma}}}
\newcommand{\Star}{\mbox{\upshape st}\,}
\newcommand{\lk}{\mbox{\upshape lk}\,}

\textheight9.15in
\textwidth6.2in
\hoffset-0.6in
\voffset-0.6in

\begin{document}

\title{A characterization of normal 3-pseudomanifolds with at most two singularities}
\author{Biplab Basak, Raju Kumar Gupta and Sourav Sarkar}

\date{}

\maketitle

\vspace{-15mm}
\begin{center}

\noindent {\small Department of Mathematics, Indian Institute of Technology Delhi, New Delhi 110016, India$^{1}$.}

\end{center}

\footnotetext[1]{{\em E-mail addresses:} \url{biplab@iitd.ac.in} (B.
Basak), \url{Raju.Kumar.Gupta@maths.iitd.ac.in} (R. K. Gupta), \url{Sourav.Sarkar@maths.iitd.ac.in} (S. Sarkar).}

\medskip

\begin{center}
\date{June 22, 2023}
\end{center}

\hrule

\begin{abstract}
Characterizing face-number-related invariants of a given class of simplicial complexes has been a central topic in combinatorial topology. In this regard, one of the well-known invariants is $g_2$. Let $K$ be a normal $3$-pseudomanifold  such that  $g_2(K) \leq g_2(\lk (v)) + 9$ for some vertex $v$ in $K$. Suppose either $K$ has only one singularity or $K$ has two singularities (at least) one of which is an $\mathbb{RP}^2$-singularity. We prove that $K$ is obtained from some boundary complexes of $4$-simplices by a sequence of operations of types connected sums,  bistellar $1$-moves, edge contractions, edge expansions, vertex foldings, and edge foldings. In  case $K$ has one singularity, $|K|$ is a  handlebody with its boundary coned off. Further, we prove that the above upper bound is sharp for such normal $3$-pseudomanifolds.
\end{abstract}

\noindent {\small {\em MSC 2020\,:} Primary 05E45; Secondary 52B05, 57Q05, 57Q25, 57Q15.

\noindent {\em Keywords:} Normal pseudomanifolds, $f$-vector, vertex folding, edge folding, edge contraction.}

\medskip

\section{Introduction}
For a $d$-dimensional finite simplicial complex $K$, $g_2(K)$ is defined by  $g_2:=f_1-(d+1)f_0 + \binom{d+2}{2}$, where $f_0$ and $f_1$ denote the number of vertices and edges in $K$. The study on $g_2$ has been illuminated in a different prospect due to the lower bound conjecture for 3- and 4-manifolds by Walkup \cite{Walkup} in 1970. He proved that for any closed and connected triangulated $3$-manifold $K$, $g_2(K)\geq 0$, and the equality occurs if and only if $K$ is a triangulation of a stacked sphere. Barnette \cite{Barnette1, Barnette2, Barnette3} proved that if $K$ is the boundary complex of a simplicial $(d+1)$-polytope or, more generally, a triangulation of a connected $d$-manifold, then $g_2(K)\geq 0$.
In \cite{Kalai}, Kalai proved that if $K$ is a normal pseudomanifold of dimension at least 3 with the 2-dimensional links as triangulated spheres, then $g_2(K)\geq g_2(\lk (\sigma))$  for every face $\sigma$ of co-dimension at least 3. Fogelsanger's  results in \cite[Chapter 8]{Fogelsanger} made it possible to remove the restriction on 2-dimensional links. Therefore,  $g_2(K) \geq 0$  for every normal $d$-pseudomanifold $K$.  In \cite{Gromov}, Gromov has similar work on the non-negativity of $g_2$.

Based on values of $g_{2}$, several classifications of combinatorial manifolds and normal pseudomanifolds are studied in the literature.  In \cite{Swartz2008}, Swartz proved that the number of combinatorial manifolds, up to PL-homeomorphism, of a given dimension $d$ with an upper bound on $g_2$, is finite. The combinatorial characterizations of  normal $d$-pseudomanifolds are known due to Kalai \cite{Kalai} (for $g_2 = 0$), Nevo and Novinsky  \cite{NevoNovinsky} (for $g_2 = 1$), and Zheng \cite{Zheng} (for $g_2 = 2$). In all three cases, the normal pseudomanifold is the boundary complex of a simplicial polytope. The classification of all triangulated pseudomanifolds of dimension $d$ with at most $d + 4$ vertices can be found in \cite{BagchiDatta98}. For further developments in this direction, one may refer to \cite{NovikSwartz, Swartz2009, TayWhiteWhiteley}.

 In  \cite{BasakSwartz}, Basak and Swartz introduced two new concepts, viz., vertex folding and edge folding. For a normal 3-pseudomanifold $K$ with exactly one singularity, they proved that if $g_2(K) = g_2(\lk (v))$ for some vertex $v$ in $K$, then $|K|$ is a  handlebody with its boundary coned off. This leads to a natural question: what will be the maximum value of $n\in\mathbb{N}$, for which $g_2{(K)} \leq g_2(\lk (t)) + n$ implies that $|K|$ is a  handlebody with its boundary coned off, where $t$ is a vertex of $K$ with $g_2(\lk (t)) \geq g_2(\lk (v))$ for any other vertex $v$? In this article, we answer this question. We prove that if  $g_2{(K)} \leq g_2(\lk (t)) + 9$, then $|K|$ is a  handlebody with its boundary coned off. Moreover, the above upper bound is sharp for such normal $3$-pseudomanifolds (cf. Corollary \ref{corollary:handlebody}). We give a combinatorial characterization of normal 3-pseudomanifolds with at most two singularities, where, in the case of two singularities, one is assumed to be an $\mathbb{RP}^2$-singularity. If $K$ has no singular vertices, then from \cite{BasakGupta, Walkup}, we know that  $g_2{(K)} \leq g_2(\lk (t)) + 9$ implies $K$ is a triangulated $3$-sphere and is obtained from some boundary complexes  of $4$-simplices by a sequence of operations of types connected sums,  bistellar $1$-moves, edge contractions, and edge expansions. In this article, we extend this characterization to normal 3-pseudomanifolds with at most two singularities, that reads as follows:

\begin{theorem}\label{main theorem}
Suppose $g_2(K) \leq g_2(\lk (v)) + 9$ for some vertex $v$ of a normal $3$-pseudomanifold $K$. If either $K$ has only one singularity or $K$ has two singularities (at least) one of which is an $\mathbb{RP}^2$-singularity, then $K$ can be obtained from some boundary complexes  of $4$-simplices by a sequence of operations of types connected sums,  bistellar $1$-moves, edge contractions, edge expansions, vertex foldings, and edge foldings. Further, the above upper bound is sharp for such normal $3$-pseudomanifolds.
\end{theorem}

\section{Preliminaries}
A {\em simplicial complex} $K$ is a finite collection of simplices in $\mathbb{R}^m$ for some $m\in\mathbb{N}$,  such that for any simplex $\sigma\in K$, all of its faces are in $K$, and for any two simplices $\sigma , \tau\in K$, $\sigma\cap \tau$ is either empty or a face of both.
We assume that the empty set $\emptyset$ (is a simplex of dimension $-1$) is a member of every simplicial complex. We define the dimension of a simplicial complex $K$ to be the maximum of the dimension of simplices in $K$. For a $d$-dimensional simplicial complex $K$, the $f$-vector is defined  as  a $(d+2)$-tuple $(f_{-1},f_0,\dots,f_d)$, where $f_{-1}=1$ and for $0\leq i\leq d$, $f_i$ denotes the number of $i$-dimensional faces in $K$. A maximal face in a simplicial complex $K$ is called a {\em facet}, and if all the facets are of the same dimension, then  $K$ is said to be a {\em pure} simplicial complex. A {\em subcomplex} of $K$ is a simplicial complex $T\subseteq K$.  Let $S\subset V(K)$, where $V(K)$ is the vertex set of $K$. Then the subcomplex of $K$ induced on the vertex set $S$ is denoted by $K[S]$.  By $|K|$ we mean the union of all simplices in $K$ together with the subspace topology induced from $\mathbb{R}^m$.  A {\em triangulation} of a polyhedra $X$ is a simplicial complex $K$ together with a PL-homeomorphism between $|K|$ and $X$.

 Two simplices $\sigma = u_0u_1\cdots u_k$ and $\tau = v_0v_1\cdots v_l$ in $\mathbb{R}^n$ for some $n\in \mathbb{N}$ are called skew if $u_0,\dots ,u_k,v_0,\dots,v_l$ are affinely independent. In that case $u_0\cdots u_kv_0\cdots v_l$ is a $(k + l + 1)$-simplex and is denoted by  $\sigma\star\tau$ or $\sigma\tau$. Two simplicial complexes $K$ and $L$ in some $\mathbb{R}^n$ are called skew if $\sigma$ and $\tau$ are skew for all $\sigma\in K$ and $\tau\in L$. If $K$ and $L$ are skew then we define $K\star L = K \cup L \cup\{\sigma\tau : \sigma\in K, \tau\in L\}$. The simplicial complex $K\star L$ is called the join of $K$ and $L$. If $K$ and $L$ are two simplicial complexes in $\mathbb{R}^p$ and $\mathbb{R}^q$ respectively, then we can define their join in a bigger space. More explicitly, let $i_1 : \mathbb{R}^p\to \mathbb{R}^{p+q+1}$, $i_2 : \mathbb{R}^q\to \mathbb{R}^{p+q+1}$ be the inclusion maps given by $i_1(x_1,\dots, x_p) = (x_1,\dots, x_p,0,\dots, 0)$ and $i_2(x_1,\dots, x_q) = (0,\dots,0,x_1,\dots, x_q,1)$. Let $K' := \{i_1(\sigma) : \sigma\in K\}$ and $L' := \{i_2(\tau : \tau\in L\}$. Then $K\cong K', L\cong L'$ and $K'$ and $L'$ are skew in $\mathbb{R}^{p+q+1}$. We define $K\star L=K'\star L'$.
%
%
For a simplex $\sigma$ in $\mathbb{R}^p$  and a simplicial complex $K$ in $\mathbb{R}^q$, by $\sigma \star K$ we mean $\{\alpha\,:\,\alpha \leq \sigma \}\star K$ in $\mathbb{R}^{p+q+1}$ for some $p, q\in \mathbb{N}$. The {\em link} of a face $\sigma$ in $K$ is defined as $\{ \gamma\in K : \gamma\cap\sigma=\emptyset$ and $ \gamma\sigma\in K\}$, and it is denoted by $\lk (\sigma,K)$. The {\em star} of a face $\sigma$ in $K$  is defined as $\{\alpha : \alpha\leq\sigma \beta; \beta\in \lk (\sigma,K)\}$, and it is denoted by $\Star (\sigma,K)$. If the underlying simplicial complex is specified, then we use the notations  $\lk (\sigma)$ and $\Star (\sigma)$ to refer to the link and star of the face $\sigma$, respectively. For every face $\sigma$ in $K$, by $d(\sigma,K)$ (or, $d(\sigma)$ if $K$ is specified) we mean the number of vertices in $\lk (\sigma)$. For two vertices $x$ and $y$, $(x,y]$ denotes the semi-open and semi-closed edge $xy$, where $y\in (x,y]$ but $x \not \in (x,y]$. By $(x,y)$, we denote the open edge $xy$, where $x, y\not \in (x,y)$.  By $B_{x_1,\dots,x_m}(p;q)$, we denote the bi-pyramid with $m$ base vertices $x_1,\dots,x_m$ and apexes $p$ and $q$.

\par A {\em normal $d$-pseudomanifold} without boundary (respectively with boundary) is a connected pure simplicial complex in which every face of dimension $(d-1)$ is contained in exactly two (respectively at most two) facets and the links of all the simplices of dimension $\leq (d-2)$ are connected. For a normal  $d$-pseudomanifold $K$ with a connected boundary, its boundary $\partial K$ is a normal $(d-1)$-pseudomanifold whose facets are $(d-1)$-dimensional faces of $K$, each of which is contained in exactly one facet of $K$. Throughout the article, by a normal $d$-pseudomanifold, we mean a normal $d$-pseudomanifold without boundary. For a simplex $\sigma$, its boundary complex $\partial (\sigma)$ is the collection of all of its proper faces.
Let $K$ be a normal $d$-pseudomanifold with boundary $\partial K$. Let $K'=K\cup (t\star \partial K)$, where $t$ is a new vertex. If $\partial K$ is connected, then $K'$ is a normal $d$-pseudomanifold without boundary. We say $K'$ is obtained from $K$
by coning off the boundary at $t$, and the topological space $|K'|$ is the  topological space $|K|$ with its boundary coned off. In a normal $d$-pseudomanifold $K$, the vertices whose links are triangulated spheres are called {\em non-singular} vertices, and the remaining are called {\em singular} vertices. Let $v$ be a singular vertex in a normal $d$-pseudomanifold $K$ with $|\lk (v,K)|\cong S$. In this case, we say $K$ has an $S$-singularity at $v$.

\begin{definition}\label{Edge contraction}{\rm Let $K$ be a normal $d$-pseudomanifold and $u,v$ be two vertices in $K$ such that $uv \in K$, and $\lk (u,K)\cap \lk (v,K)=\lk (uv,K)$. Consider $K'= K \setminus (\{\alpha\in K : u\leq \alpha\} \cup \{\beta\in K : v\leq \beta\})$ and  $K_1=K'\cup (w \star \partial K')$, where $w$ is a new vertex. Then, we say $K_1$ is obtained from $K$ by an {\em edge contraction} at $uv$.

Let $L$ be a normal $d$-pseudomanifold and $w$  be a vertex in $L$. Let $S$ be an induced normal $(d-2)$-pseudomanifold in $\lk (w)$ such that $S$ separates $\lk (w)$ into two portions, say $L_1$ and $L_2$, where $L_1$ and $L_2$ are normal $(d-1)$-pseudomanifolds with the same boundary complex $S$. Consider $L'= (L \setminus \{\alpha\in L : w\leq\alpha\}) \cup (u \star L_1)\cup (v\star L_2) \cup (uv \star S)$, where $u$ and $v$ are two new vertices. We say $L'$ is obtained from $L$ by an {\em edge expansion}. Note that, $L$ can be obtained from $L'$ by contracting the edge $uv$.}
\end{definition}

\begin{definition}\label{definition:operation}{\rm
Let $K$ be a normal $3$-pseudomanifold. 
\begin{enumerate}
%


\item[$(A)$] Let $uv$ be an edge in $K$ such that $\lk (uv)=\partial(abc)$ and $abc \not \in K$. Consider $K'=(K\setminus\{\alpha\in K: uv \leq \alpha\})\cup\{abc, uabc, vabc\}$. Since $abc\not \in K$, we have $|K'|\cong |K|$. Further, $g_2(K')=g_2(K)-1$. We say $K'$ is obtained from $K$ by a {\em bistellar $2$-move}. The reverse operation is called a {\em bistellar $1$-move}.

\item[$(B)$]  Let $w$  be a non-singular vertex in $K$ such that $\partial (abc) \subset \lk (w)$ , where $abc\not \in K$. Then $\partial (abc)$ separates $\lk (w)$ into two portions, say $D_1$ and $D_2$, where $D_1$ and $D_2$ are triangulated discs with the same boundary complex $\partial (abc)$. Consider $K'= (K \setminus \{\alpha\in K : w\leq\alpha\}) \cup (u\star D_1)\cup (v\star D_2) \cup (abc \star \partial(uv))$, where $u$ and $v$ are two new vertices. Then $g_2(K')=g_2(K)-1$ and $|K|'\cong |K|$.  Note that this combinatorial operation  is a combination of an edge expansion and a bistellar $2$-move.

\end{enumerate}
}
\end{definition}

\begin{remark} \label{remark:operation}{\rm
Let $K$ be a normal $3$-pseudomanifold. Let $K'$ be obtained from $K$ by one of the following combinatorial operations: $(i)$ an edge contraction, $(ii)$ a bistellar $2$-move, $(iii)$ the operation as in Definition \ref{definition:operation} $(B)$, or $(iv)$ a combination of an edge expansion and an edge contraction. Then, $K$ is obtained from $K'$ by one of the following combinatorial operations: $(i)$ an edge expansion, $(ii)$ a bistellar $1$-move, $(iii)$ a combination of  a bistellar $1$-move and an edge contraction, or $(iv)$ a combination of an edge expansion and an edge contraction.}
\end{remark}

\begin{lemma}\label{lemma:homeomorphic}
For $d\geq 3$, let $K$ be a normal $d$-pseudomanifold. Let $uv$ be an edge in $K$ such that $\lk (u,K)\cap \lk (v,K)= \lk (uv,K)$ and $|\lk (v,K)| \cong \mathbb{S}^{d-1}$. If $K_1$ is the normal pseudomanifold obtained from $K$ by contracting the edge $uv$, then  $|K|\cong |K_1|$.
\end{lemma}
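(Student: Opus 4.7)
The plan is to construct an explicit PL-homeomorphism $|K| \to |K_1|$ that modifies only a neighborhood of the edge $uv$. Set $X := \Star (u,K) \cup \Star (v,K)$ and $K^c := K \setminus \{\alpha \in K : u \in \alpha \text{ or } v \in \alpha\}$. Then $|K| = |K^c| \cup |X|$ and, by definition of $K_1$, $|K_1| = |K^c| \cup |w * \partial K^c|$, both decompositions meeting along the common subspace $|\partial K^c|$. A routine check using the link condition shows that $\partial K^c = L_u^- \cup_{\lk (uv,K)} L_v^-$, where $L_u^- := \{\sigma \in \lk (u,K) : v \notin \sigma\}$ and $L_v^-$ is defined analogously. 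It therefore suffices to construct a PL-homeomorphism $|X| \to |w * \partial K^c|$ restricting to the identity on $|\partial K^c|$; since $|w * \partial K^c|$ is by definition the PL cone on $|\partial K^c|$, this reduces to realizing $|X|$ itself as a PL cone on $|\partial K^c|$.

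The crucial step will be the decomposition
\[
|X| = |u * L_u^-| \cup |\Star (v,K)|,
\]
which holds because $\Star (u,K) = (u * L_u^-) \cup \Star (uv,K)$ and $\Star (uv,K) \subseteq \Star (v,K)$. A short simplicial computation using the link condition gives $(u * L_u^-) \cap \Star (v,K) = u * \lk (uv,K) = \Star (u, \lk (v,K))$, a $(d-1)$-ball sitting both in $\partial (u * L_u^-)$ and in $\lk (v,K) = \partial\Star (v,K)$.

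The hypothesis $|\lk (v,K)| \cong \mathbb{S}^{d-1}$ now enters. Since $\lk (v,K)$ is a PL $(d-1)$-sphere, $|\Star (v,K)|$ is a PL $d$-ball, $|\lk (uv,K)|$ is a PL $(d-2)$-sphere, and $|\Star (u,\lk (v,K))|$ together with $|L_v^-|$ form two PL $(d-1)$-balls with common boundary $|\lk (uv,K)|$ whose union is $|\lk (v,K)|$. Consequently $|u * L_v^-|$ is itself a PL $d$-ball, sharing its boundary subcomplex with $|\Star (v,K)|$. By the PL uniqueness of balls (Alexander's trick), the identity map on the common boundary $|\lk (v,K)|$ extends to a PL-homeomorphism $\phi : |\Star (v,K)| \to |u * L_v^-|$.

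Defining $\Phi : |X| \to |u * \partial K^c|$ as the identity on $|u * L_u^-|$ and as $\phi$ on $|\Star (v,K)|$ produces a well-defined PL-homeomorphism (the two formulas agree on the shared face $|\Star (u,\lk (v,K))|$, where $\phi$ is the identity) restricting to the identity on $|\partial K^c|$. Composing with the canonical cone-swap $u * |\partial K^c| \to w * |\partial K^c|$, which sends the apex $u$ to $w$ and fixes the base pointwise, produces the desired PL-homeomorphism $|X| \to |w * \partial K^c|$ relative to $|\partial K^c|$; extending by the identity on $|K^c|$ gives $|K| \cong |K_1|$. The main obstacle is the bookkeeping in the key decomposition: one must carefully verify, using the link condition, that $(u * L_u^-) \cap \Star (v,K)$ is exactly $u * \lk (uv,K)$, after which the PL uniqueness of balls handles the rest.
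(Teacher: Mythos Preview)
Your proof is correct and takes essentially the same approach as the paper: both use that $|\Star(v,K)|$ is a PL $d$-ball to replace the region $\Star(u,K)\cup\Star(v,K)$ by a cone on its boundary, obtaining the desired homeomorphism relative to the complement $K^c$. The paper phrases the ball-absorption step as attaching $|\Star(v,K)|$ to $K^c$ along the $(d-1)$-ball $L_v^-$ (which does not change the PL type) and then coning off, whereas you work inside $X$ and invoke Alexander's trick explicitly to swap $|\Star(v,K)|$ for $|u\ast L_v^-|$; the underlying geometry is identical, and your version is somewhat more explicit about the boundary-matching needed for the gluing.
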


\begin{proof}
Since $\lk (u,K)\cap \lk (v,K)=\lk (uv,K)$, the edge contraction is possible. Let $w$ be the new vertex in $K_1$ obtained by identifying the vertices $u$ and $v$ in $K$. Let $K'=K \setminus (\{\alpha\in K : u \leq \alpha\} \cup \{\beta\in K : v \leq \beta\})$. Then $K'$ is a normal $d$-pseudomanifold with boundary and $\partial K'=\partial(\Star (u,K)\cup \Star (v,K))$. Since $\lk (u,K)\cap \lk (v,K)=\lk (uv,K)$ and $|\lk (v,K)| \cong \mathbb{S}^{d-1}$, $\lk (v,K)\setminus \{\alpha\in \lk (v,K) : u \leq \alpha\}$ is a triangulated $(d-1)$-ball, say $D$, with boundary $\lk (uv,K)$. Further, $K'\cap \Star (v,K)=D$. Since $|\lk (v,K)| \cong \mathbb{S}^{d-1}$,
  $|\Star (v,K)| \cong \mathbb{D}^d$. Therefore, $|K'|$ is PL-homeomorphic to $|K'\cup \Star (v,K)|$.  Let $K'' := K'\cup \Star (v,K)$. Then $K = K'' \cup (u\star \partial K'')$ and $K_1 = K' \cup (w\star \partial K')$. Since $|K''|$ and $|K'|$ are PL-homeomorphic, $|K|$ and $|K_1|$ are also PL-homeomorphic.
\end{proof}

From \cite[Lemma 10.8]{Walkup}, we have the following result (see \cite[Section 4]{BasakGupta} for more details):

\begin{proposition}[\cite{BasakGupta, Walkup}] \label{proposition:BG}
	If $K$ is a triangulated $3$-manifold with $g_2(K)\leq 9$, then $K$ is a triangulated $3$-sphere, and is obtained from some boundary complexes  of $4$-simplices by a sequence of operations of types connected sums,  bistellar $1$-moves, edge contractions, and edge expansions. 
\end{proposition}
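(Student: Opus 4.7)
The plan is to induct on $g_2(K)$. The base case $g_2(K) = 0$ is Walkup's classical theorem: a normal $3$-pseudomanifold with $g_2 = 0$ is a stacked $3$-sphere, i.e., an iterated connected sum of copies of $\partial(\Delta^4)$, which is already in the required form.

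For the inductive step with $g_2(K) \geq 1$, I would first deduce that $|K| \cong \mathbb{S}^3$ from the known lower bounds relating $g_2$ to topological invariants of 3-manifolds (Heegaard genus or first Betti number): any closed triangulated $3$-manifold that is not a sphere has $g_2 \geq 10$, so the hypothesis $g_2(K) \leq 9$ forces $|K| \cong \mathbb{S}^3$.

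With $K$ a $3$-sphere, the core task is to exhibit a reducing move whose reverse is in the allowed list. The three candidates are:
\begin{enumerate}
\item[(i)] a \emph{connected sum decomposition}, when $K$ contains the boundary of a $3$-simplex (i.e.\ four vertices $a,b,c,d$ with all six edges and all four triangles present, but $abcd \notin K$) as a separating embedded $2$-sphere;
\item[(ii)] a \emph{bistellar $2$-move} (operation $(A)$), when some edge $uv$ satisfies $d(uv) = 3$, $\lk (uv) = \partial(abc)$ and $abc \notin K$; this decreases $g_2$ by $1$;
\item[(iii)] an \emph{edge contraction} (operation $(B)$), when some edge $uv$ with $d(uv) \geq 4$ satisfies $\lk (u) \cap \lk (v) = \lk (uv)$; this decreases $g_2$ by $d(uv) - 3 \geq 1$.
\end{enumerate}

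The main obstacle is to show that when $g_2(K) \geq 1$ at least one of (i)--(iii) is available, and when all three are directly blocked, to apply the auxiliary operation $(D)$ (an edge expansion followed by an edge contraction) as a preparatory move that reshuffles the triangulation without changing the PL type, so that one of (i)--(iii) becomes applicable afterwards. This case analysis -- carried out on the vertex links, which are triangulated $2$-spheres controlled by Kalai's relative bound $g_2(K) \geq g_2(\lk (v))$ -- is the heart of the Walkup / Basak--Gupta argument. Iterating these reductions drives $g_2$ down to $0$, where the base case completes the induction.
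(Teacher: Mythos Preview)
The paper does not supply a proof of this proposition; it is quoted verbatim from the references \cite{BasakGupta, Walkup} and used only as a black box (the $n=0$ base case in Theorems~\ref{theorem:1-singularity} and~\ref{theorem:2-singularity}). So there is no in-paper proof to compare your sketch against.

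Your outline does capture the skeleton of the Walkup/Basak--Gupta argument: induct on $g_2$, with the stacked-sphere theorem as base case, and at each step locate a $g_2$-reducing move (bistellar $2$-move, edge contraction, or a connected-sum splitting along a missing tetrahedron), possibly after a preparatory move of type $(D)$. That is indeed how those papers proceed, and you are right that the hard content is the case analysis showing a reducing move always exists.

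One point to flag: your intermediate step of first deducing $|K|\cong\mathbb{S}^3$ from ``known lower bounds relating $g_2$ to topological invariants'' is misplaced. The statement that every triangulation of a closed $3$-manifold other than $\mathbb{S}^3$ has $g_2\ge 10$ is not an independent input; it is precisely (the topological half of) what Walkup's theorem establishes, and it is proved \emph{via} the combinatorial reduction you describe, not prior to it. In the actual argument one never needs to know in advance that $|K|$ is a sphere: one shows directly that if none of the reducing moves (i)--(iii) is available then $g_2\ge 10$, contradicting the hypothesis, and the PL-type $\mathbb{S}^3$ then falls out because each move preserves the homeomorphism type and the base case is a sphere. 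So drop that step; it is either circular or redundant.
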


 Let $K$ be a normal $d$-pseudomanifold and $f:V(K) \to \mathbb{R}^{d+1}$ be a function.  A {\em  stress} of $f$ is a function $\omega:E(K) \to \mathbb{R}$ such that for every vertex $v \in V,$
$\sum_{vu \in E(K)} \omega(vu) (f(v) -f(u)) = 0$, where $E(K)$ denotes the set of edges in $K$.
The set of all stresses of $f$ is an $\mathbb{R}$-vector space, which we denote by $\mathcal{S}(K_f).$ From  \cite{Kalai}, we know that if $K$ is a  normal $d$-pseudomanifold (or a cone over a normal $(d-1)$-pseudomanifold) and $f$ is a generic map from $V$ to $\mathbb{R}^{d+1},$  then $\dim \mathcal{S}(K_f) = g_2(K).$ Using the fact that a stress $\omega$ on a subcomplex of $K$ can be extended to $K$ by setting $\omega(uv)=0$ for any $uv$ not in the subcomplex, we have the following result (due to Kalai \cite{Kalai}):

\begin{lemma}[\cite{BasakSwartz, Kalai}] \label{g2>face}
Let $K$ be a normal $3$-pseudomanifold and $v$ be a vertex in $K$. Then $g_2(K) \geq g_2(\Star (v,K ))=g_2(\lk (v,K)).$ Moreover, if  $u$ and $v$ are two vertices  in $K$ such that $\Star (u,K) \cap \Star (v,K) =\emptyset$, then $g_2(K) \geq g_2(\Star (u,K ))+ g_2(\Star (v,K ))=g_2(\lk (u,K ))+ g_2(\lk (v,K )).$ 
\end{lemma}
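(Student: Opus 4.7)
The plan is to deduce the lemma directly from the two facts recalled immediately before its statement: (i) for a generic affine map $f\colon V(\Delta)\to\mathbb{R}^{d+1}$ on a normal $d$-pseudomanifold (or a cone over a normal $(d-1)$-pseudomanifold), one has $\dim\mathcal{S}(\Delta_f)=g_2(\Delta)$; and (ii) any stress on a subcomplex extends to a stress on $\Delta$ by setting $\omega(uv)=0$ on edges outside the subcomplex.

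First I would settle the easy numerical identity $g_2(\Star(t,\Delta))=g_2(\lk(t,\Delta))$. Writing $f_0',f_1'$ for the face numbers of $\lk(t,\Delta)$, one has $f_0(\Star(t))=f_0'+1$ and $f_1(\Star(t))=f_1'+f_0'$, so
\begin{equation*}
g_2(\Star(t))=(f_1'+f_0')-4(f_0'+1)+10=f_1'-3f_0'+6=g_2(\lk(t)).
\end{equation*}
Here I am using that $\lk(t,\Delta)$ is a triangulated closed surface (a normal $2$-pseudomanifold), so its $g_2$ is computed with the $d=2$ formula, while $\Star(t,\Delta)$ is a cone and is handled by fact (i) with $d+1=4$.

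For the inequality $g_2(\Delta)\ge g_2(\Star(t,\Delta))$, fix a generic $f\colon V(\Delta)\to\mathbb{R}^{4}$; its restriction to $V(\Star(t,\Delta))$ is again generic. Fact (ii) gives a linear extension-by-zero map $\iota\colon\mathcal{S}(\Star(t,\Delta)_f)\to\mathcal{S}(\Delta_f)$. Since an extended stress still records its original values on the edges of $\Star(t,\Delta)$, $\iota$ is injective, and therefore
\begin{equation*}
g_2(\Delta)=\dim\mathcal{S}(\Delta_f)\ \ge\ \dim\mathcal{S}(\Star(t,\Delta)_f)=g_2(\Star(t,\Delta))=g_2(\lk(t,\Delta)).
\end{equation*}

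For the two-vertex statement, apply the same extension-by-zero procedure to $\Star(u,\Delta)$ and $\Star(v,\Delta)$, producing subspaces $\iota_u(\mathcal{S}(\Star(u)_f))$ and $\iota_v(\mathcal{S}(\Star(v)_f))$ of $\mathcal{S}(\Delta_f)$. Because $\Star(u,\Delta)\cap\Star(v,\Delta)=\emptyset$, these two subcomplexes share no edges, so the extended stresses have disjoint edge-supports and consequently the two subspaces intersect trivially. Their direct sum sits inside $\mathcal{S}(\Delta_f)$, yielding
\begin{equation*}
g_2(\Delta)\ \ge\ g_2(\Star(u,\Delta))+g_2(\Star(v,\Delta))=g_2(\lk(u,\Delta))+g_2(\lk(v,\Delta)).
\end{equation*}
The only substantive point to verify is that extension-by-zero really produces a stress on $\Delta$, i.e., that the equilibrium equation $\sum_{uv\in E}\omega(uv)(f(u)-f(v))=0$ continues to hold at every vertex after the extension; this is immediate because the equation is trivial at vertices outside the subcomplex and is unchanged at vertices inside it (the new zero summands contribute nothing). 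No genuine obstacle arises beyond this bookkeeping, since the hard content—the dimension formula $\dim\mathcal{S}(\Delta_f)=g_2(\Delta)$—is imported from Kalai's work as stated in the excerpt.
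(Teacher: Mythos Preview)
Your proof is correct and follows exactly the approach the paper sketches: the paper does not give a formal proof of this lemma but only cites \cite{BasakSwartz, Kalai} and remarks, just before the statement, that extension-by-zero of stresses combined with Kalai's dimension formula $\dim\mathcal{S}(\Delta_f)=g_2(\Delta)$ yields the result. Your write-up simply fills in the details of that sketch, including the routine computation $g_2(\Star(t))=g_2(\lk(t))$ and the observation that disjoint stars give stress subspaces with disjoint edge-supports and hence trivial intersection.
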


Let $K$ be a pure simplicial complex and $\sigma_1, \sigma_2$ be two facets of $K$. A bijection $\psi: \sigma_1\to \sigma_2$ is said to be {\em admissible} (cf. \cite{BagchiDatta}) if for any vertex $x\leq \sigma_1$, length of every path between $x$ and $\psi(x)$ is at least $3$. In this contest, any bijective map between two facets from different connected components of $K$ is admissible. Now if $\psi$ is an admissible bijection between $\sigma_1$ and $\sigma_2$, by identifying all the faces $\rho_1\leq \sigma_1$ with $\psi(\rho_1)$ and removing the identified facets, we get a new complex, say $K^{\psi}$. If $\sigma_1$ and $\sigma_2$ are from the same component of $K$, then we say $K^{\psi}$ is formed via a {\em handle addition} (cf. \cite{BasakSwartz}) to $K$. If $\sigma_1$ and $\sigma_2$ are from different components of $K$,  then we say $K^{\psi}$ is formed via a {\em connected sum} (cf. \cite{BasakSwartz}), and we write it as $K^{\psi}= K_1 ~\#_\psi~ K_2$, where $\sigma_1\in K_1$ and $\sigma_2\in K_2$.
 If $\bar{x}$ represents the identified vertices $x$ and $\psi(x)$ in the connected sum or handle addition, then $\lk (\bar x,K^{\psi})=\lk (x,K_1) ~\#_{\psi} ~ \lk (\psi(x),K_2)$, and for all other vertices, the links will remain the same.
 
 A {\em missing triangle} of $K$ is a triangle $\sigma$ such that $\sigma \notin K$ but $\partial(\sigma)\subset K$. Similarly, a {\em missing tetrahedron} of $K$ is a tetrahedron $\tau$ such that $\tau\notin K$ but $\partial(\tau)\subset K$.

\begin{lemma}[\cite{BasakSwartz}] \label{connected sum and handle addition}
Let $K$ be a normal $3$-dimensional pseudomanifold, and suppose $\tau$ is a missing tetrahedron in $K.$  If for every vertex $x \leq \tau$,  the missing triangle formed by the other three vertices separates the link of $x,$  then $K$ is formed using either a handle addition or a connected sum.  
\end{lemma}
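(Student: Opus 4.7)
The plan is to cut $\Delta$ along the $2$-sphere $\partial \tau$ and show that the result is either two normal $3$-pseudomanifolds, giving a connected sum decomposition, or a single connected normal $3$-pseudomanifold with two distinguished tetrahedra to be identified, giving a handle addition. Write $\tau = \{a,b,c,d\}$; since $\tau$ is missing, all four triangles of $\partial \tau$ lie in $\Delta$, so $\partial \tau$ is a subcomplex of $\Delta$ homeomorphic to $\mathbb{S}^{2}$. For each $x \in \tau$, the $3$-cycle $C_{x} := \lk(x,\partial \tau)$ lies in $\lk(x,\Delta)$, and by hypothesis $C_{x}$ splits $\lk(x,\Delta)$ into two surfaces-with-boundary $L_{x}^{+}$ and $L_{x}^{-}$. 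Coning with $x$ partitions the tetrahedra of $\Star(x,\Delta)$ into two local sides. I first verify that these local sides can be coherently aligned across the vertices of $\tau$: given any edge $xy \subset \partial \tau$, the cycle $\lk(xy,\Delta)$ contains the other two vertices of $\tau$ but, because $\tau$ is absent, the edge between them is missing from this cycle, splitting it into two arcs; the side-labellings inherited from $\lk(x)$ and from $\lk(y)$ agree on these arcs. The result is a consistent $2$-coloring of every tetrahedron of $\Delta$ containing a face of $\partial \tau$.

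Next propagate the $2$-coloring across triangles not contained in $\partial \tau$ by dual-graph adjacency. Either the propagation is globally consistent, producing a partition of all tetrahedra of $\Delta$ into $\Delta^{+}$ and $\Delta^{-}$ meeting exactly along $\partial \tau$, or some dual cycle of $\Delta$ crosses $\partial \tau$ an odd number of times. In the consistent case, filling in a copy of $\tau$ on each side yields normal $3$-pseudomanifolds $\Delta_{1}, \Delta_{2}$, and $\Delta = \Delta_{1} \#_{\psi} \Delta_{2}$ for the natural bijection $\psi$ between the filled tetrahedra; admissibility is automatic because $\psi$ is a bijection between facets of different connected components of $\Delta_{1} \sqcup \Delta_{2}$. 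In the inconsistent case, build $\tilde{\Delta}$ by duplicating every vertex, edge and triangle of $\partial \tau$ into labelled copies indexed by $1$ and $2$, distributing every tetrahedron of $\Delta$ adjacent to $\partial \tau$ to the side dictated by its colour, and adding in two tetrahedra $\tau_{1} = a_{1}b_{1}c_{1}d_{1}$ and $\tau_{2} = a_{2}b_{2}c_{2}d_{2}$. Then $\tilde{\Delta}$ is a connected normal $3$-pseudomanifold and $\Delta = \tilde{\Delta}^{\psi}$ with $\psi(x_{1}) = x_{2}$, exhibiting $\Delta$ as a handle addition.

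The main obstacle is checking that $\psi$ is admissible in the handle case, i.e., that for each $x \in \tau$ every path in $\tilde{\Delta}$ from $x_{1}$ to $x_{2}$ has length at least $3$. By construction $x_{1}$ and $x_{2}$ share no tetrahedron of $\tilde{\Delta}$, so they are not adjacent; suppose for contradiction a common neighbour $v$ exists. If $v$ is a duplicated vertex, say $v = y_{j}$ for some $y \in \tau \setminus \{x\}$, then $y_{j}$ is adjacent to $x_{i}$ only inside the filled tetrahedron $\tau_{i}$, forcing $i = j$, so $y_{j}$ cannot be adjacent to both $x_{1}$ and $x_{2}$. If $v \notin \tau$, then $v$ is a single vertex of $\tilde{\Delta}$ with $xv \in \Delta$, so $v \in \lk(x,\Delta) \setminus C_{x}$, and the separation hypothesis places $v$ in exactly one of $L_{x}^{+}, L_{x}^{-}$. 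Every tetrahedron of $\Delta$ containing the edge $xv$ corresponds to an edge of $\lk(v,\lk(x,\Delta))$, which lies entirely in the region containing $v$, so all such tetrahedra sit on a single side of $\partial \tau$; consequently $v$ is adjacent in $\tilde{\Delta}$ to only one of $x_{1}, x_{2}$, contradiction. This admissibility verification is the step where the link-separation hypothesis is used decisively, and the rest is a direct assembly of the cut-and-fill construction.
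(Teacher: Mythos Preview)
This lemma is quoted from \cite{BasakSwartz} and is stated in the present paper without proof, so there is no argument here to compare against directly. Your cut-along-$\partial\tau$ strategy is the standard one and is essentially correct: the separation hypothesis makes $|\partial\tau|$ a two-sided $2$-sphere in $|\Delta|$, and the combinatorial cut yields either two pieces (connected sum) or one (handle addition), exactly as you describe.

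Two small points deserve tightening. First, in the ``coherent alignment'' step you check compatibility along each edge $xy\subset\tau$ via the two arcs of $\lk(xy,\Delta)$, but you do not explicitly verify that these pairwise alignments close up consistently around the $2$-faces of $\partial\tau$; this is needed, and it follows by tracking the two tetrahedra incident to a face such as $abc$ (each of them lies in $\Star a\cap\Star b\cap\Star c$ and receives the same sign from all three links once one alignment is fixed). Second, in the admissibility check for the handle case, the sentence ``$y_{j}$ is adjacent to $x_{i}$ only inside the filled tetrahedron $\tau_{i}$'' is not literally correct, since any facet $\tilde\sigma$ with $\{x,y\}\subset\sigma$ also contains the edge $x_iy_i$; the conclusion $i=j$ nonetheless holds, simply because every facet of $\tilde\Delta$ carries a single subscript on its $\tau$-vertices. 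With these adjustments (and the routine verification that the cut pieces are again normal pseudomanifolds, which follows since for $x\in\tau$ the new link is $L_x^{\pm}$ capped by the missing triangle and for $y\notin\tau$ the link is unchanged up to relabeling), your argument goes through.
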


A straightforward computation shows that for a $d$-dimensional complex $K$, a handle addition and a connected sum satisfy the following:
\begin{equation} \label{g_2: handles}
g_2(K^\psi) = g_2(K) + \binom{d+2}{2},
\end{equation}

\begin{equation} \label{g_2:connected sum}
g_2(K_1 ~\#_\psi~ K_2) = g_2(K_1) + g_2(K_2).
\end{equation}

\begin{lemma} \label{connected sum}
Let $K$ be a normal  $3$-pseudomanifold such that $g_2{(K)} \leq g_2(\lk (v,K)) + 9$ for some vertex $v$. Let $\sigma$ be a missing tetrahedron in $K$ such that for every vertex $x \leq \sigma$,  the missing triangle formed by the other three vertices separates the link of $x$.  Then $K$ is formed using a connected sum.  
\end{lemma}

\begin{proof}
By Lemma \ref{connected sum and handle addition}, $K$ is formed using either a handle addition or a connected sum.  If possible, let $K$ be formed using a handle addition from $K'$ through the admissible bijection $\psi: \sigma_1\to \sigma_2$. Then the identified simplex $\sigma$ (obtained by identifying $\psi(\sigma_1)$ with $\sigma_2$) is a missing tetrahedron. If $v\not \leq \sigma$, then $g_2(\lk (v,K))=g_2(\lk (v,K')) \leq g_2(K') = g_2(K)-10.$ This is a contradiction. If possible, let $v \leq \sigma$ be obtained by identifying  $v_1 \leq \sigma_1$ and  $v_2 \leq \sigma_2$ in $K'$. Then  $g_2(\lk (v,K))=g_2(\lk (v_1,K'))  + g_2(\lk (v_2,K')).$
Since $\psi$ is admissible, $\Star (v_1,K')$ and $\Star (v_2,K')$ are disjoint. Then it follows from Lemma \ref{g2>face} that $g_2(K')\geq g_2(\lk (v_1,K'))  + g_2(\lk (v_2,K'))$. Therefore, $g_2(K')\geq g_2(\lk (v,K))$ and $g_2(K)=g_2(K')+10$. This implies, $g_2(K)\geq g_2(\lk (v,K))+10$, which is a contradiction. Therefore, $K$ is formed using a connected sum.  
\end{proof}

Handle addition and connected sum are standard parts of combinatorial topology, but the operation of {\em  folding} was recently introduced in \cite{BasakSwartz}.

\begin{definition}[Vertex folding \cite{BasakSwartz}] 
{\rm 
Let $\sigma_1$ and $\sigma_2$ be two facets of a simplicial complex $K$, whose intersection is a single vertex $x.$  A bijection $\psi:\sigma_1 \to \sigma_2$ is {\em  vertex folding admissible} if $\psi(x) = x$ and for all other vertices $y$ of $\sigma_1$, the only path of length two from $y$ to $\psi(y)$ is $P(y, x, \psi(y)).$ For a vertex folding admissible map $\psi$, we can form the complex $K^\psi_x$ by identifying all faces $\rho_1 \leq \sigma_1 $ and $\rho_2 \leq \sigma_2 $, such that $\psi(\rho_1) = \rho_2,$ and then removing the facet formed by identifying $\sigma_1$ and $\sigma_2.$  In this case, we say that $K^\psi_x$ is a {\em  vertex folding} of $K$  at $x.$   In a similar spirit, $K$ is a {\em  vertex unfolding} of $K^\psi_x.$ }
\end{definition}

\noindent A straightforward computation shows that if $K^\psi_x$ is obtained from a $d$-dimensional  simplicial complex  $K$ by a vertex folding at $x$, then

 \begin{equation} \label{folding g2}
g_2(K^\psi_x) = g_2(K)+\binom{d+1}{2}.
\end{equation}
The definition of {\em edge folding} follows the same pattern as vertex folding.
\begin{definition}[Edge folding \cite{BasakSwartz}]
{\rm 
Let $\sigma_1$ and $\sigma_2$ be two facets of a simplicial complex $K$, whose intersection is an edge $uv$. A bijection $\psi : \sigma_1 \to \sigma_2$ is {\em edge folding admissible} if $\psi(u)= u, \psi(v) = v$, and for all other vertices $y$ of $\sigma_1$, all paths of length two or less from $y$ to $\psi(y)$ pass through either $u$ or $v$. Identify all faces $\rho_1\leq \sigma_1$ and $\rho_2 \leq \sigma_2$, such that $\psi : \rho_1\to \rho_2$ is a bijection. The complex obtained by removing the facet resulting from identifying $\sigma_1$ and $\sigma_2$ is denoted by $K^\psi_{uv}$ and is called an {\em edge folding} of $K$ at $uv$. In a similar spirit,  $K$ is an {\em edge unfolding} of $K^\psi_{uv}$.}
\end{definition}
\noindent If $K$ is a normal $d$-pseudomanifold and $K^\psi_{uv}$ is obtained from $K$ by an edge folding at $uv$, then 
\begin{equation} \label{edge folding g2}
g_2(K^\psi_{uv}) = g_2(K)+\binom{d}{2}.
\end{equation}
Let $vabc$ be a missing tetrahedron in $K$.
 If  $|\lk (v,K)|$ is an orientable surface, then a small neighborhood of $|\partial(abc)|$ in $|\lk (v,K)|$ is an annulus, and if $|\lk (v,K)|$ is a non-orientable surface, then a small neighborhood of $|\partial(abc)|$ in $|\lk (v,K)|$ is either an annulus or a M\"{o}bius strip. 

\begin{lemma}[\cite{BasakSwartz}] \label{lemma:missingtetra2}
Let $K$ be a  normal $3$-pseudomanifold. Let $abcd$ be a missing tetrahedron in $K$ such that $(i)$ for $x\in\{b,c,d\}$, $\partial(K[\{a,b,c,d\}\setminus\{x\}])$ separates  $\lk (x,K),$ and $(ii)$  $\partial(bcd)$  does not separate  $\lk (a,K)$. Then there exists $K'$, a  normal $3$-pseudomanifold such that  $K = (K')^\psi_a$, i.e., $K $ is obtained from a vertex folding at $a \in K'$,  and $abcd$ is the image of the removed facet.  
\end{lemma}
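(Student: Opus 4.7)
The plan is to construct $\Delta'$ by ``unfolding'' $\Delta$ at $a$: the vertex $a$ stays as a single vertex, while each of $b, c, d$ splits into two copies based on which side of a separating cycle they lie on, and the two facets $\sigma_1 = abcd$ and $\sigma_2 = ab'c'd'$ (whose common image under the folding is the missing tetrahedron $\tau$) are added back. Condition $(i)$ supplies, for each $x \in \{b,c,d\}$, a partition $\lk (x,\Delta) = N_x^1 \cup N_x^2$ with $N_x^1 \cap N_x^2 = \partial(\tau - x)$, hence a label $\ell_x(T) \in \{1,2\}$ for every facet $T$ of $\Delta$ containing $x$, according to which component of $\lk (x,\Delta)$ contains $T-x$.

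The central step is to coordinate $\ell_b, \ell_c, \ell_d$ into a single label $\ell(T)$ defined on every facet of $\Delta$ meeting $\{b,c,d\}$. The key observation is that if two facets $T_1, T_2$ of $\Delta$ both contain $x \in \{b,c,d\}$ and share a triangle $\rho$ (which then contains $x$), then $\ell_x(T_1) \neq \ell_x(T_2)$ precisely when the edge $\rho - x$ lies on the cycle $\partial(\tau - x)$, i.e., precisely when $\rho$ is one of the four marked triangles $abc, abd, acd, bcd$. Thus the ``flip'' behaviour is uniform across the three $\ell_x$'s: for any distinct $x, y \in \{b,c,d\}$ and any pair $T_1, T_2$ that share a triangle containing both $x$ and $y$, either both $\ell_x$ and $\ell_y$ flip (when the shared triangle is marked) or neither does. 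Since the tetrahedra of $\Delta$ containing such an edge $xy$ form a cycle under their triangle adjacencies at $xy$ (namely $\lk (xy,\Delta)$), the quantity $\ell_x(T) - \ell_y(T) \pmod 2$ is constant along this cycle; hence after fixing a base facet (say, one of the two containing the triangle $bcd$) and relabelling the $N_x^i$'s if necessary, the three labels agree on every facet where they are jointly defined. I expect this $\mathbb{Z}/2$-monodromy style check to be the main obstacle.

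With a unified label $\ell$ in hand, I construct $\Delta'$ by $(a)$ retaining every face of $\Delta$ disjoint from $\{b,c,d\}$, $(b)$ including, for each facet $T$ meeting $\{b,c,d\}$, either $T$ itself if $\ell(T) = 1$ or its primed version (with $b \mapsto b'$, $c \mapsto c'$, $d \mapsto d'$) if $\ell(T) = 2$, and $(c)$ adjoining the facets $\sigma_1 = abcd$ and $\sigma_2 = ab'c'd'$. Verifying that $\Delta'$ is a normal $3$-pseudomanifold is then local: each marked triangle of $\Delta$ splits into an unprimed and a primed triangle in $\Delta'$, each completed by $\sigma_1$ or $\sigma_2$ respectively, while non-marked triangles retain their two adjacent facets; the link $\lk (b,\Delta')$ is $N_b^1$ with the triangle $acd$ filled in, a closed normal $2$-pseudomanifold, and similarly for the other five split vertices; and $\lk (a,\Delta')$ is obtained from $\lk (a,\Delta)$ by splitting $b, c, d$ along the non-separating cycle $\partial(bcd)$ and inserting the new triangles $bcd$ and $b'c'd'$, exactly inverting the surface identification performed by the folding. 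Finally, the bijection $\psi:\sigma_1 \to \sigma_2$ defined by $a \mapsto a$ and $x \mapsto x'$ for $x \in \{b,c,d\}$ is vertex folding admissible, because by construction no facet of $\Delta'$ contains both a primed and an unprimed vertex from $\{b,c,d\}$, so $b$ and $b'$ (and likewise the other two pairs) share no common neighbour other than $a$, forcing the only length-two path from $y$ to $\psi(y)$ to be $P(y, a, \psi(y))$; a direct comparison then gives $(\Delta')^\psi_a = \Delta$ with $abcd$ as the image of the removed facet.
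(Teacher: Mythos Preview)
The paper does not prove this lemma itself; it is quoted from \cite{BasakSwartz}, so there is no in-paper argument to compare against. Your unfolding construction is the natural approach and is essentially correct: the coordination of $\ell_b,\ell_c,\ell_d$ via the $\mathbb{Z}/2$-monodromy check along each circle $\lk(xy,\Delta)$, normalised at a facet through $bcd$, is exactly the right mechanism for producing a single label $\ell$, and from there the construction of $\Delta'$ and the admissibility of $\psi$ go through as you describe.

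One point worth making explicit concerns $\lk(a,\Delta')$. Your description (``split $b,c,d$ along the non-separating cycle and cap with $bcd$ and $b'c'd'$'') tacitly uses that $\partial(bcd)$ has an \emph{annular} (two-sided) neighbourhood in $\lk(a,\Delta)$; a one-sided curve cannot arise from a vertex folding, since in $(\Delta')^\psi_a$ the neighbourhood of the image cycle is obtained by gluing two disjoint disc-neighbourhoods along their boundaries and is therefore an annulus. Fortunately this two-sidedness is a consequence of hypothesis $(i)$ via your own labels: restricted to triangles of $\lk(a)$ incident to $\partial(bcd)$, the unified $\ell$ gives a consistent $2$-colouring that flips across each of $bc,cd,db$ and is constant across every other shared edge, which is precisely the combinatorial witness that the normal bundle of the cycle is trivial. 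With that observation recorded, hypothesis $(ii)$ then guarantees the cut surface is connected, and the rest of your local checks (links of the six split vertices, the pseudomanifold condition on $2$-faces, admissibility of $\psi$) suffice.
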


\begin{lemma}[\cite{BasakSwartz}]\label{lemma:missingtetra3}
Let $K$ be a  normal $3$-pseudomanifold. Let $abuv$ be a missing tetrahedron in $K$ such that $(i)$ for $x\in\{a,b\}$, $\partial(K[\{a,b,u,v\}\setminus\{x\}])$ separates  $\lk (x,K),$ and $(ii)$ a small neighborhood of $|\partial(abv)|$ in $|\lk (u,K)|$ is a M\"{o}bius strip. Then a small neighborhood of $|\partial(abu)|$ in $|\lk (v,K)|$ is also a M\"{o}bius strip. Further, there exists $K'$ a  normal $3$-pseudomanifold such that  $K = (K')^\psi_{uv}$, i.e., $K $ is obtained from an edge folding at $uv \in K'$,  and $abuv$ is the removed facet.
\end{lemma}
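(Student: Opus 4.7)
The plan is to adapt the strategy of Lemma \ref{lemma:missingtetra2}, now handling two ``twisted'' vertices $u,v$ (both fixed by $\psi$) rather than a single unfolded vertex. The proof splits into two parts: first deducing the M\"obius condition at $v$, then constructing $\Delta'$ by splitting the ``separating'' vertices $a,b$.

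For the M\"obius conclusion at $v$, I would use a $\mathbb{Z}/2$ parity argument around the 2-sphere $|\partial\tau|$. Assign $\epsilon_x\in\{0,1\}$ to each vertex $x\in\tau$ according as $\partial(\tau-x)$ has an annular or M\"obius neighborhood in $\lk(x,\Delta)$. Since a separating cycle on a surface is necessarily 2-sided, hypothesis (i) gives $\epsilon_a=\epsilon_b=0$, while hypothesis (ii) gives $\epsilon_u=1$. My plan is to establish $\epsilon_a+\epsilon_b+\epsilon_u+\epsilon_v\equiv 0\pmod 2$ by tracking the ``side'' of $\partial\tau$ as one traverses a closed 4-cycle $a\to b\to u\to v\to a$ along $|\partial\tau|$. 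At each vertex $x$, the local twist contributed to the side-transport is governed by the $\mathbb{Z}/2$-first-Stiefel-Whitney class of $\lk(x,\Delta)$ evaluated on the corresponding arc of $\partial(\tau-x)$; summing over the four vertices, the total twist along the 4-cycle must vanish (by the triviality of the orientation bundle of $|\partial\tau|\cong\mathbb{S}^2$), forcing $\epsilon_v=1$.

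For the construction of $\Delta'$, the idea is to split $a\mapsto\{a,a'\}$ and $b\mapsto\{b,b'\}$ using the separating cycles, keeping $u$ and $v$ fixed. The cycle $\partial(buv)$ partitions $\lk(a,\Delta)$ into two surfaces $D_1^a,D_2^a$ with common boundary $\partial(buv)$, and similarly $\partial(auv)$ partitions $\lk(b,\Delta)$ into $D_1^b,D_2^b$. These partitions induce a 2-coloring on the facets of $\Delta$ containing $a$ or $b$. I define $\Delta'$ by introducing new vertices $a',b'$, relabeling $a\to a'$ and $b\to b'$ in every ``class-2'' facet, and adjoining the two new facets $abuv$ and $a'b'uv$. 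One then verifies that $\lk(a,\Delta')=D_1^a\cup\{buv\}$ and $\lk(a',\Delta')=D_2^a\cup\{b'uv\}$ (and analogously for $b,b'$) are closed surfaces; moreover, $\lk(u,\Delta')$ and $\lk(v,\Delta')$ arise from $\lk(u,\Delta)$ and $\lk(v,\Delta)$ by a ``M\"obius cut and fill'' along the one-sided 3-cycles, refilling with the two triangles $abv,a'b'v$ meeting at $v$ (resp.\ $abu,a'b'u$ meeting at $u$). Defining $\psi:abuv\to a'b'uv$ by $a\mapsto a',b\mapsto b',u\mapsto u,v\mapsto v$, one checks $\psi$ is edge folding admissible and $(\Delta')^\psi_{uv}=\Delta$ with $abuv$ the removed facet.

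The main obstacle will be verifying the consistency of the 2-coloring on facets containing both $a$ and $b$, and then checking the edge folding admissibility condition: that every length-$\leq 2$ path from $a$ to $a'$ or from $b$ to $b'$ in $\Delta'$ passes through $u$ or $v$. Both reduce to showing that the cut in $\Delta$ defined by the 3-cycles $\partial(buv),\partial(auv)$, together with the M\"obius-threaded cuts at $u,v$, genuinely separates $\Star(a,\Delta')$ from $\Star(a',\Delta')$ except through $\Star(uv,\Delta')$. The M\"obius hypothesis at both $u$ and $v$ is essential here: a separating cycle at either would either disconnect $\Delta'$ or violate the admissibility; one-sidedness encodes exactly that the cut weaves through $\Star(uv,\Delta')$ without globally separating the complex.
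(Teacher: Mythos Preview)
The paper does not prove this lemma: it is stated with attribution \cite{BasakSwartz} and no argument is given in the present manuscript. Consequently there is no in-paper proof against which to compare your proposal.

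That said, your outline follows the correct architecture for this type of result and mirrors how such statements are established in the source \cite{BasakSwartz}: one first pins down the M\"obius condition at the remaining vertex, then performs the explicit ``unfolding'' by duplicating the separating vertices $a,b$ and reinstating the two facets $abuv$, $a'b'uv$. Your parity/Stiefel--Whitney framing for the first claim is heavier machinery than necessary (the original argument is a direct combinatorial tracking of how the two tetrahedra adjacent to each face of $\partial\tau$ pair up around the six edges of $\tau$), but the underlying $\mathbb{Z}/2$ obstruction is the same. For the construction of $\Delta'$, your identified obstacles are the genuine ones: the compatibility of the $2$-coloring on facets through the edge $ab$ (which forces a consistent choice of which side of $\partial(buv)$ in $\lk a$ matches which side of $\partial(auv)$ in $\lk b$), and the verification that the resulting links of $u$ and $v$ in $\Delta'$ are connected surfaces. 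You correctly note that the one-sidedness at both $u$ and $v$ is exactly what makes the cut-and-fill produce a \emph{single} connected link rather than two components; this is the crux, and your sketch stops just short of carrying it out. A complete argument would make explicit the bijection between facets of $\Delta'$ through $u$ (resp.\ $v$) and the corresponding facets of $\Delta$, checking that the new link is obtained by cutting along the one-sided $3$-cycle and capping the resulting $6$-cycle boundary with the two new triangles.
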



\section{Some lower bounds of $g_2$ for normal 3-pseudomanifolds with one or two singularities}

In this section, we establish a few lower bounds of $g_2$ for a class of  normal 3-pseudomanifolds with one or two singularities. Our general approaches are motivated by the idea used in \cite{Walkup}.

\medskip

\noindent {\em Definition of} $\mathcal{R}$: Let $\mathcal{R}$ be the class of all normal $3$-pseudomanifolds $K$ such that $K$ has one or two singularities and $K$ satisfies the following two properties:

\begin{enumerate}[$(i)$]
\item If $K$ contains the boundary complex of a $3$-simplex as a subcomplex, then $K$ contains the $3$-simplex as well.
	
\item There is no normal $3$-pseudomanifold $K'$ such that $K'$ is obtained from $K$ by a combinatorial operation mentioned in Remark \ref{remark:operation} and $g_2(K')< g_2(K)$.
\end{enumerate}

Now we state a few lemmas, the proofs of which follow from \cite{Walkup} using  Lemma \ref{lemma:homeomorphic}.

\begin{lemma}[Lemma 10.1, \cite{Walkup}]\label{lemma:d>4}
Let $K\in \mathcal{R}$, and $uv$ be an edge in $K$. Then $d(uv)\geq 4$, i.e., $\lk (v,\lk (u))$ has at least four vertices.
\end{lemma}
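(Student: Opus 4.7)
My plan is to argue by contradiction, assuming $d(uv) \leq 3$ and ruling out each value. Since $\lk(uv,K)$ is a normal $1$-pseudomanifold, it is a cycle and hence has at least three vertices, so the only value to rule out is $d(uv) = 3$.

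Assume $d(uv) = 3$, so that $\lk(uv,K) = \partial(abc)$ for some vertices $a,b,c$. I would split into two cases according to whether the triangle $abc$ itself lies in $K$.

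If $abc \notin K$, then we are precisely in the situation of operation $(A)$ in Definition \ref{definition:operation}: a bistellar $2$-move on the edge $uv$ produces a normal $3$-pseudomanifold $K'$ with $|K'| \cong |K|$ and $g_2(K') = g_2(K) - 1$. Because $|K'| \cong |K|$, the singular vertices of $K'$ coincide with those of $K$, so $K'$ still has one or two singularities of the allowed type. This contradicts condition $(ii)$ in the definition of $\mathcal{R}$.

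If $abc \in K$, I plan to show that $K$ must equal $\partial(\Delta^4)$, which has no singular vertex and therefore cannot belong to $\mathcal{R}$. The three tetrahedra in $\Star(uv,K)$ are $uvab, uvac, uvbc$, so the six triangles $uab, uac, ubc, vab, vac, vbc$ are all in $K$. Combined with $abc \in K$, this exhibits $\partial(uabc) \subset K$, and condition $(i)$ of the definition of $\mathcal{R}$ then promotes this to $uabc \in K$; symmetrically, $vabc \in K$. Now I would examine $\lk(u,K)$: it contains the four triangles $vab, vac, vbc, abc$, which together form the $2$-sphere $\partial(vabc)$. Since $\lk(u,K)$ is a connected closed surface and a connected closed surface that sits as a subcomplex inside another connected closed surface of the same dimension must coincide with it, I get $\lk(u,K) = \partial(vabc)$. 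Running the same argument at each vertex of $\{u,v,a,b,c\}$ and invoking strong connectivity of the normal pseudomanifold $K$, we conclude $K = \partial(\Delta^4)$, contradicting $K \in \mathcal{R}$.

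The main obstacle is the second case. The bistellar $2$-move case is essentially a bookkeeping application of the definitions, but the $abc \in K$ case requires carefully leveraging condition $(i)$ to lift boundaries of $3$-simplices to actual tetrahedra and then using the surface-in-surface rigidity plus strong connectivity to pass from a five-vertex local picture to the entire complex.
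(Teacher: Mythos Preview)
Your proof is correct and follows essentially the same route as the paper: both split on whether $abc\in K$, use the bistellar $2$-move (operation $(A)$) to contradict condition $(ii)$ of $\mathcal{R}$ when $abc\notin K$, and use condition $(i)$ to promote $\partial(uabc)$ and $\partial(vabc)$ to actual tetrahedra when $abc\in K$, concluding that $K=\partial(uvabc)$, which has no singular vertex. Your argument in the second case is simply a fleshed-out version of the paper's one-line claim ``this is possible only if $\partial(uvabc)=K$'': the surface-in-surface observation about $\lk(u,K)\supseteq\partial(vabc)$ is exactly the mechanism behind that claim.
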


\begin{lemma}[Lemma 10.2, \cite{Walkup}]\label{lemma:nonempty}
Let $K\in \mathcal{R}$, and $uv$ be an edge in $K$, where $v$ is a non-singular vertex. Then $\lk (u)\cap \lk (v)\setminus\lk (uv)\neq \emptyset$.
\end{lemma}

\begin{lemma} [Lemma 10.4, \cite{Walkup}]\label{lemma:missing-triangle}
Let $K\in \mathcal{R}$, and $u$ be a non-singular vertex in $K$. If $\lk (u)$ contains the boundary complex of a $2$-simplex $\sigma$ as a subcomplex, then $\lk (u)$ contains the $2$-simplex $\sigma$ as well. Thus, for every vertex $v\in \lk (u)$, $ \lk (u)\setminus \{\alpha\in \lk (u)\,:\, v\leq \alpha\}$ does not contain a diagonal edge. 
\end{lemma}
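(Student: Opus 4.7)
The plan is to prove the first assertion by contradiction and then derive the second statement from it. Suppose $\partial(abc) \subseteq \lk (u, K)$ but $abc \notin \lk (u, K)$. The goal is to apply operation $(C)$ of Definition \ref{definition:operation} with $w = u$ to produce a normal $3$-pseudomanifold $K'$ with $g_2(K') = g_2(K) - 1$, violating property $(ii)$ of $\mathcal{R}$. Operation $(C)$ requires that $u$ is non-singular (given), that $\partial(abc) \in \lk (u)$ (given), and crucially that $abc \notin K$ --- it is this last condition that must be verified.

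Showing $abc \notin K$ is the heart of the proof, and it is exactly where property $(i)$ of $\mathcal{R}$ enters. From $\partial(abc) \subseteq \lk (u)$ one reads off the three edges $ua, ub, uc \in K$ and the three triangles $uab, ubc, uca \in K$. If $abc$ were also in $K$, then $\partial(uabc) \subseteq K$; property $(i)$ would then force $uabc \in K$, hence $abc \in \lk (u)$, contradicting the standing assumption. So $abc \notin K$, and now operation $(C)$ is legal and yields $K'$ with $|K'| \cong |K|$ and $g_2(K') = g_2(K) - 1$, contradicting property $(ii)$ and completing the first part.

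The second statement then falls out cleanly. Since $u$ is non-singular, $\lk (u)$ is a triangulated $2$-sphere and $\Star (v, \lk (u))$ is a closed disk with boundary cycle $\lk (v, \lk (u))$; a diagonal would be an edge $ab$ whose endpoints lie on this boundary cycle but which itself is interior to $\lk (u) - \Star (v, \lk (u))$. From $a, b \in \lk (v, \lk (u))$ one gets $va, vb \in \lk (u)$, and together with $ab \in \lk (u)$ this gives $\partial(vab) \subseteq \lk (u)$. The first part of the lemma then forces $vab \in \lk (u)$, placing $ab$ into $\Star (v, \lk (u))$ and contradicting the choice of $ab$ as an interior edge.

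The one thing worth monitoring is that operation $(C)$ indeed produces a valid normal $3$-pseudomanifold with $g_2$ dropped by one and with the same PL type; but since $(C)$ is described in Definition \ref{definition:operation} as the composition of an edge expansion $(B')$ and a bistellar $2$-move $(A)$, both of which already carry these guarantees, no additional bookkeeping is required once the missing-triangle hypothesis $abc \notin K$ has been established.
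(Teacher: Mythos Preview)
Your proof is correct and follows essentially the same approach as the paper's own proof: both establish $abc\notin K$ via property~$(i)$ of $\mathcal{R}$, then invoke operation~$(C)$ to violate property~$(ii)$, and both deduce the diagonal statement by applying the first part to the triangle $vab$ (the paper writes $xyv$) to force the edge back into $\Star(v,\lk(u))$.
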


\begin{lemma}[Lemma 10.6, \cite{Walkup}]\label{lemma:interior 2}
Let $K\in \mathcal{R}$, and $t$ be a singular vertex in $K$. Let $uv$ be an edge in $K$ such that $uv\not \in \lk (t)$. If $z \in \lk (u)\cap \lk (v)\setminus\lk (uv)$, then $zw \not \in \lk (u)\cap \lk (v)$ for  every non-singular vertex $w\in \lk (uv)$.
\end{lemma}

\begin{lemma}\label{lemma:open edge}
Let $K\in \mathcal{R}$, and $uv$ be an edge in $K$, where $v$ is a non-singular vertex.  Then $\lk (u)\cap \lk (v)\setminus\lk (uv)$ contains some vertices.
\end{lemma}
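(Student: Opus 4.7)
The plan is to start from Lemma~\ref{lemma:nonempty}, which already tells us the subcomplex $L := \lk (u)\cap\lk (v)-\lk (uv)$ is non-empty; the work left is to promote this to the existence of a $0$-simplex in $L$. I would pick a face $\tau\in L$ of smallest possible dimension. Since $\lk (u)$ and $\lk (v)$ are simplicial complexes, every proper face of $\tau$ already sits in $\lk (u)\cap\lk (v)$; minimality then forces $\partial\tau\subseteq\lk (uv)$.

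The strategy is then to rule out $\dim\tau=2$ and $\dim\tau=1$, leaving only $\dim\tau=0$. Because $v$ is non-singular, $|\lk (v)|\cong\mathbb{S}^{2}$ and $\lk (uv)=\lk (u,\lk (v))$ is a single cycle. If $\dim\tau=2$, writing $\tau=abc$, we have $\partial(abc)\subseteq\lk (uv)$; but a single cycle containing the three edges of a triangle must equal that triangle, so $\lk (uv)=\partial(abc)$ and $d(uv)=3$, contradicting Lemma~\ref{lemma:d>4}.

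For $\dim\tau=1$, write $\tau=ab$; then $a,b\in\lk (uv)$ while $ab\in\lk (v)\setminus\lk (uv)$. I would first observe that $ab\notin\Star (u,\lk (v))$: otherwise $uab\in\lk (v)$, hence $uvab\in K$, forcing $ab\in\lk (uv)$, a contradiction. Thus $ab$ lies in the closed disc $\lk (v)-\Star (u,\lk (v))$ whose boundary cycle is $\lk (uv)$, with its two endpoints on that boundary but the edge itself in the interior. In other words, $ab$ is a diagonal of this disc, which is ruled out by Lemma~\ref{lemma:missing-triangle} applied to the non-singular vertex $v$. Consequently $\dim\tau=0$ and $L$ contains a vertex, as required.

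The only subtlety worth flagging as an obstacle — and it is a mild one — is the role-swap of $u$ and $v$ when invoking Lemma~\ref{lemma:missing-triangle}: the statement there concerns a non-singular vertex whose link is being examined, and here that vertex is $v$ rather than $u$. One must verify $u\in\lk (v)$ to apply the lemma, which is immediate from $uv\in K$. Apart from this small bookkeeping point, the argument is the same dimension-chase on a minimal simplex used in Lemma~\ref{lemma:interior}, except that the dimension-one case is now eliminated via Lemma~\ref{lemma:missing-triangle} (using the non-singularity of $v$) instead of via the hypothesis $uv\notin\lk (t)$.
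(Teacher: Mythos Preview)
Your argument is correct. The overall plan---pick a minimal-dimension simplex $\tau\in L$, force $\partial\tau\subseteq\lk(uv)$, then eliminate $\dim\tau=2$ via Lemma~\ref{lemma:d>4} and $\dim\tau=1$ separately---matches the paper's implicit structure (in fact the paper only writes out the edge case, leaving the triangle case to the same degree consideration you spell out explicitly).

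The one genuine difference is how you dispatch $\dim\tau=1$. The paper does not go through Lemma~\ref{lemma:missing-triangle}; instead it observes directly that if $z,w\in\lk(uv)$ and $zw\in\lk(u)\cap\lk(v)-\lk(uv)$, then the four triangles $uvz,\,uvw,\,uzw,\,vzw$ all lie in $K$ while the tetrahedron $uvzw$ does not, contradicting property~$(i)$ in the definition of $\mathcal{R}$. This is a one-line appeal to the ``no missing tetrahedron'' axiom and does not use the non-singularity of $v$ at that step at all. Your route through Lemma~\ref{lemma:missing-triangle} (applied to the non-singular vertex $v$, with the roles of $u$ and $v$ swapped) is equally valid---it is essentially the $\dim\tau=1$ argument from Lemma~\ref{lemma:interior} transplanted here---but it invokes a derived lemma (which rests on property~$(ii)$ of $\mathcal{R}$) where the paper uses property~$(i)$ directly. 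Both work; the paper's is shorter, yours is a natural reuse of machinery already in place.
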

\begin{proof}
It follows from Lemma \ref{lemma:nonempty} that $\lk (u)\cap \lk (v)\setminus\lk (uv)\neq \emptyset$. If possible, let $\lk (u)\cap \lk (v)\setminus\lk (uv)$ contain an open edge $(z, w)$, where  $z, w\in \lk (uv)$. Then $uvzw$ is a missing tetrahedron in $K$. This contradicts the fact that $K\in \mathcal{R}$. Thus the result follows. 
\end{proof} 

By using Lemma \ref{lemma:homeomorphic} in the proof of Lemma 11.1 of \cite{Walkup} we have the following result:

\begin{lemma} \label{lemma:atleast}
	Let $K\in \mathcal{R}$, and  $t$ be a singular vertex in $K$. Let $u$ be a non-singular vertex   in $\lk (t,K)$ such that $\lk (t)\cap \lk (u)\setminus\lk (ut)=(t_1,w]$ or $\{w\}$, where  $w\in \lk (u)\cap \lk (t)\setminus\lk (ut)$ and $t_1\in \lk (ut)$ is a singular vertex. Then $d(tw), d(uw) \geq d(tu)$. 
\end{lemma}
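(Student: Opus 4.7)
I plan to argue by contradiction using the operation $(D)$ of Definition \ref{definition:operation}: if $d(uw) < d(tu) =: n$ (or $d(tw) < n$), I will produce a normal $3$-pseudomanifold obtained from $K$ by an edge expansion followed by an edge contraction whose $g_2$ is strictly less than $g_2(K)$, contradicting property (ii) in the definition of $\mathcal{R}$. Throughout, $u$ is non-singular, so $|\lk (u, K)| \cong \mathbb{S}^2$; and in the case $(t_1, w]$ the two singularities of $K$ are exactly $t$ and $t_1$, forcing $w$ to be non-singular with $|\lk (w, K)| \cong \mathbb{S}^2$.

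To prove $d(uw) \geq n$, suppose $m := d(uw) < n$. The cycle $\lk (uw, K) = C_m$ separates $\lk (u, K) \cong \mathbb{S}^2$ into the star $\{w\} * C_m$ and a complementary disc $D_t$; since $tuw \notin K$, the vertex $t$ lies in the interior of $D_t$. Apply operation $(B')$ of Definition \ref{definition:operation} to split $u$ along $C_m$: the vertex $u$ is replaced by two vertices $u, u'$ joined by a new edge $uu'$, where $u$ cones $D_t$ and $u'$ cones $\{w\} * C_m$. The resulting $K'$ is a normal $3$-pseudomanifold with $g_2(K') = g_2(K) + m - 3$, and $u$ remains non-singular. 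The crucial point is $\lk (t, K') \cap \lk (u, K') = \lk (tu, K')$: the lone extra vertex $w$ of $\lk (t) \cap \lk (u) - \lk (ut)$ in $K$ now lies in the star of $u'$ rather than of $u$, so $w \notin \lk (u, K')$; and in the case $(t_1, w]$, the triangle $u t_1 w$ of $K$ is reassigned to the $u'$-side of the split because its opposite edge $t_1 w$ sits in the disc $\{w\} * C_m$ of $\lk (u, K)$ (with $w$ interior and $t_1$ on the boundary cycle), so $u t_1 w \notin K'$ and hence $t_1 w \notin \lk (u, K')$. Consequently Lemma \ref{lemma:homeomorphic} permits contracting $tu$ in $K'$, producing $K''$ with $g_2(K'') = g_2(K') - (n-3) = g_2(K) + m - n < g_2(K)$, the required contradiction.

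The bound $d(tw) \geq n$ follows by the symmetric argument, splitting $w$ instead of $u$ via the cycle $\lk (tw, K)$. This cycle bounds the disc $\{t\} * \lk (tw, K)$ inside $\lk (w, K)$, so operation $(B')$ applies whether or not $w$ is non-singular; the split replaces $w$ by vertices $w, w'$ with $w$ coning the complementary region (containing $u$ in its interior, again because $tuw \notin K$) and $w'$ coning the disc $\{t\} * \lk (tw, K)$, and then we contract $tu$. The main technical obstacle — common to both halves of the argument — is confirming that $\lk (t, K') \cap \lk (u, K') = \lk (tu, K')$ after the split. In the $(t_1, w]$ case, the vertex $t_1$ lies on the splitting cycle and so is adjacent to both new copies of the split vertex, and one must verify simplex-by-simplex that each triangle of $K$ through the edge $t_1 w$ is reassigned consistently: in the present splitting of $w$, the triangle $tt_1 w$ becomes $tt_1 w'$ (its opposite face $tt_1$ lies in the $w'$-side disc) while $ut_1 w$ stays as $ut_1 w$ (its opposite face $ut_1$ lies in the $w$-side region), so neither $t_1 w$ nor $t_1 w'$ belongs to both links simultaneously.
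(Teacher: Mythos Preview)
Your proposal is correct and takes essentially the same approach as the paper: the paper performs a central retriangulation of $\Star(uw)$ (respectively $\Star(tw)$) with a new center vertex and then contracts the edge $ut$, which is precisely the edge-expansion-plus-contraction (operation $(D)$) you describe, just phrased in the retriangulation language rather than as a vertex split. Your write-up is in fact more careful than the paper's on the key point that $\lk(t,K')\cap\lk(u,K')=\lk(tu,K')$ after the operation, particularly in tracking the edge $t_1w$ in the $(t_1,w]$ case.
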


%
%

\begin{lemma}\label{lemma:degree-singular}
If $K\in \mathcal{R}$, and $K$ contains exactly one singular vertex $t$, then $d(t)\geq 8$.

\end{lemma}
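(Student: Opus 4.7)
The plan is to argue by contradiction: assume $d(t)\le 7$ and derive a contradiction. Since $t$ is the only singular vertex, $\lk(t,K)$ is a connected triangulated closed surface that is not a sphere, so $\chi:=\chi(\lk(t))\le 1$; and the standard identities for a closed $2$-pseudomanifold (exactly as in the proof of Lemma~\ref{lemma:f1(st(t))}) give $f_1(\lk(t))=3d(t)-3\chi$.

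Two structural constraints on $\lk(t)$ will power the argument. Lemma~\ref{lemma:d>4} says that for every vertex $v$ of $\lk(t)$ the quantity $d(tv)$, which equals the degree of $v$ inside the surface $\lk(t)$, is at least $4$. Lemma~\ref{lemma:open edge}, applied to the edge $tv$ (legal because in the one-singularity setting every $v\neq t$ is non-singular), supplies a vertex $w\in \lk(t)\cap \lk(v)-\lk(tv)$; that is, a vertex of $\lk(t)$ which is \emph{not} adjacent to $v$ in $\lk(t)$ but which is nevertheless joined to $v$ by an edge of $K$. So every vertex of $\lk(t)$ is incident, in $K$, to some ``missing edge'' of $\lk(t)$.

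The small cases $d(t)\le 6$ follow from the elementary inequality $f_1(\lk(t))\le \binom{d(t)}{2}$ combined with $f_1(\lk(t))=3d(t)-3\chi$. For $d(t)\le 5$ this forces $\chi\ge 2$, so $\lk(t)\cong \mathbb{S}^2$, a contradiction. For $d(t)=6$ it forces $\chi=1$ and $f_1(\lk(t))=15=\binom{6}{2}$, so $\lk(t)$ is $2$-neighborly; but then no vertex of $\lk(t)$ has a non-neighbor at all, contradicting the consequence of Lemma~\ref{lemma:open edge} just extracted.

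The main, and essentially only substantive, case is $d(t)=7$. Here $f_1(\lk(t))\le 21$ forces $\chi\ge 0$. If $\chi=0$ (torus or Klein bottle), then $f_1(\lk(t))=21=\binom{7}{2}$, so $\lk(t)$ is again $2$-neighborly and the same contradiction appears. The hard subcase is $\chi=1$, where $\lk(t)$ triangulates $\mathbb{RP}^2$ with $f_1(\lk(t))=18$, leaving exactly $\binom{7}{2}-18=3$ non-edges. The crux is a short covering argument: by Lemma~\ref{lemma:open edge}, each of the $7$ vertices of $\lk(t)$ must be an endpoint of some non-edge of $\lk(t)$ that is present as an edge of $K$; but $3$ non-edges involve at most $6$ distinct vertices, so some vertex of $\lk(t)$ has no such partner -- a contradiction. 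Therefore $d(t)\ge 8$.
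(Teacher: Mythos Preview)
Your proof is correct and follows essentially the same approach as the paper's: both arguments use the edge count $f_1(\lk(t))\ge 3d(t)-3$ together with Lemma~\ref{lemma:nonempty}/\ref{lemma:open edge} to locate a vertex $u\in\lk(t)$ adjacent to every other vertex of $\lk(t)$, whence $\lk(t)\cap\lk(u)-\lk(tu)$ has no vertex, a contradiction. The only cosmetic difference is that you split the case $d(t)=7$ by the value of $\chi$ and phrase the pigeonhole as ``$3$ non-edges cover at most $6$ vertices,'' whereas the paper runs a single degree-sum pigeonhole ($f_1\ge 18$ forces some $d(ut)=6$) uniformly over $\chi\in\{0,1\}$.
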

\begin{proof}
The proof follows from the hypothesis of Lemma \ref{lemma:open edge} and the possible triangulations of $\lk (t)$ for $d(t)\leq 7$.
\end{proof}

\begin{lemma}\label{lemma:degree-singular 2}
	Let $K\in \mathcal{R}$, and $t,t_1$ be two singular vertices in $K$. If $d(t)=7$, then $d(tt_1)=6$ and $d(t_1)\geq 8$; otherwise $d(t),d(t_1)\geq 8$.
\end{lemma}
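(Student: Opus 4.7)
The argument extends Lemma \ref{lemma:degree-singular} to accommodate a second singular vertex. The key ingredients, for any singular $s$, are: the surface inequality $f_1(\lk(s)) \geq 3 f_0(\lk(s)) - 3$ (from $\chi(\lk(s)) \leq 1$), Lemma \ref{lemma:open edge} (for any edge $uv$ with $v$ non-singular, $\lk(u) \cap \lk(v) \setminus \lk(uv)$ contains a vertex), and Lemma \ref{lemma:missing-triangle}.

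First I would run the pigeonhole of Lemma \ref{lemma:degree-singular} at $t$, making the only adjustment that the graph-degree-maximizing vertex in $\lk(t)$ may now be the other singular vertex $t_1$. If $d(t) = 6$, the degree sum in the graph $\lk(t)$ is at least $30$, so some $u \in \lk(t)$ has $d(ut) \geq 5$, i.e., $u$ is adjacent in graph $\lk(t)$ to all other vertices; a non-singular $u$ contradicts Lemma \ref{lemma:open edge} (every candidate vertex in $\lk(u) \cap \lk(t) \setminus \lk(ut)$ lies in $\lk(ut)$), so $u = t_1$. But then each of the five non-singular vertices $v \in \lk(t)\setminus\{t_1\}$ satisfies $d(vt) \leq 4$ by the same Lemma, giving degree sum at most $5 + 5 \cdot 4 = 25 < 30$, a contradiction. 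The identical pigeonhole for $d(t) = 7$ produces some $u$ with $d(ut) \geq 6$; ruling out non-singular $u$ as before, $u = t_1$ and hence $d(tt_1) = 6$. Symmetrically $d(t_1) \geq 7$, and $d(t_1)=7$ forces $d(tt_1)=6$.

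It remains to rule out $d(t) = d(t_1) = 7$; combined with the steps above, this gives $d(t_1) \geq 8$ whenever $d(t) = 7$ (and the ``otherwise'' conclusion follows symmetrically). Assuming $d(t) = d(t_1) = 7$, we have $d(tt_1) = 6$ and $V(\lk(t)) = \{t_1\} \cup \{v_1,\dots,v_6\}$, $V(\lk(t_1)) = \{t\} \cup \{v_1,\dots,v_6\}$, where $\{v_1,\dots,v_6\} = V(\lk(tt_1))$ and each $v_i$ is non-singular. The $1$-dimensional normal pseudomanifold $\lk(tt_1)$ is a $6$-cycle; after relabeling we may assume this cycle is $v_1 v_2 \cdots v_6$, so that the tetrahedra of $K$ containing $tt_1$ are precisely $\{tt_1 v_i v_{i+1}\}_{i \bmod 6}$ and $\lk(tt_1 v_1) = \{v_2, v_6\}$. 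Applying Lemma \ref{lemma:open edge} to each edge $tv_i$ and noting $t_1 \in V(\lk(tv_i))$ always, I would deduce $d(tv_i) \leq 5$; combined with $6 + \sum_i d(tv_i) = 2f_1(\lk(t)) \geq 36$ this forces $d(tv_i) = 5$ for every $i$, and symmetrically $d(t_1 v_i) = 5$.

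The contradiction appears inside the $2$-sphere $\lk(v_1)$ (valid since $v_1$ is non-singular). Set $N_t := V(\lk(tv_1))\setminus\{t_1\}$ and $N_{t_1} := V(\lk(t_1 v_1))\setminus\{t\}$; both are four-element subsets of the five-element set $\{v_2,v_3,v_4,v_5,v_6\}$, so by inclusion-exclusion $|N_t \cap N_{t_1}| \geq 3$. For every $v_j \in N_t \cap N_{t_1}$ the edges $tt_1, tv_j, t_1 v_j$ all lie in $\lk(v_1)$, so $\{t,t_1,v_j\}$ forms a $3$-clique in the $1$-skeleton of $\lk(v_1)$; Lemma \ref{lemma:missing-triangle} then forces this clique to bound a $2$-face of $\lk(v_1)$, equivalently $v_1 t t_1 v_j \in K$, whence $v_j \in \lk(tt_1 v_1) = \{v_2, v_6\}$. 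Thus $|N_t \cap N_{t_1}| \leq 2$, contradicting $|N_t \cap N_{t_1}| \geq 3$. I expect the main obstacle to be locating this common-neighbours count in $\lk(v_1)$ as the right obstruction; the remaining steps are just pigeonhole on degree sums.
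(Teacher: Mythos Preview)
Your argument is correct. The opening steps (ruling out $d(t)=6$, extracting $d(tt_1)=6$ when $d(t)=7$) follow the paper's Lemma~\ref{lemma:degree-singular} pigeonhole, with the natural adjustment that the degree-maximizing vertex may be $t_1$. The divergence is in how you dispose of $d(t)=d(t_1)=7$. The paper first shows $\lk(t)\cap\lk(t_1)-\lk(tt_1)=\emptyset$ (any open edge there would yield a missing tetrahedron, contradicting $K\in\mathcal{R}$), and then argues topologically: the boundary of $\Star(t)\cup\Star(t_1)$ is a closed surface on the six vertices $v_1,\dots,v_6$ with Euler characteristic~$0$, hence first Betti number at least~$2$, which violates the Heawood-type bound (such a surface needs at least seven vertices). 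Your route instead stays purely combinatorial: the degree-sum squeeze $6+\sum_i d(tv_i)\geq 36$ with each $d(tv_i)\leq 5$ forces all $d(tv_i)=5$ (and symmetrically for $t_1$), and then the common-neighbour count inside the $2$-sphere $\lk(v_1)$ together with Lemma~\ref{lemma:missing-triangle} pins $N_t\cap N_{t_1}\subseteq\{v_2,v_6\}$, contradicting $|N_t\cap N_{t_1}|\geq 3$. Your approach has the advantage of being entirely self-contained within the lemmas already established and avoids checking that $\partial(\Star(t)\cup\Star(t_1))$ is genuinely a manifold; the paper's approach is shorter to state but leans on an external fact about vertex-minimal surface triangulations. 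One cosmetic point: your remark ``noting $t_1\in V(\lk(tv_i))$ always'' is not actually needed to deduce $d(tv_i)\leq 5$ (the existence of the extra vertex $w\in\lk(t)\cap\lk(v_i)-\lk(tv_i)$ already gives $|V(\lk(tv_i))|\leq 5$), though it is used later when you define $N_t$.
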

\begin{proof}
If either $d(t)$ or $d(t_1)$ is 6, then it contradicts the hypothesis of  Lemma \ref{lemma:open edge}.
Therefore, $d(t),d(t_1)\geq 7$. If $d(t)=7$, then  $f_1(\lk (t))\geq 18$. Thus $d(ut) = 6$ for some vertex $u\in \lk (t)$. If $u$ is a non-singular vertex, then it contradicts the hypothesis of Lemma \ref{lemma:open edge}. If $d(tt_1)=6$, then $\lk (t)\cap \lk (t_1)\setminus\lk (tt_1)$ is either empty or an open edge. If it is an open edge, then $K$ contains a missing tetrahedron, which contradicts the fact that $K\in \mathcal{R}$. Therefore, $\lk (t)\cap \lk (t_1)\setminus\lk (tt_1)=\emptyset$. If $d(t_1)=7$, then the boundary complex of $\Star (t)\cup \Star (t_1)$ is a triangulated surface with $6$ vertices with the first Betti number more than 1. This is not possible.
Therefore, $d(t_1)\geq 8$.
\end{proof}
	
Let $uv$ be an edge in $K$, where $u$ is a non-singular vertex. Define $D_{v}u := \lk (u)\setminus \{\alpha\in \lk (u)\,:\, v \leq \alpha\}$. We say $D_{v}u$ is of type $m(n)$ if $d(u)=m$ and $d(uv)=n$.

\begin{figure}[ht]
\tikzstyle{ver}=[]
\tikzstyle{vertex}=[circle, draw, fill=black!100, inner sep=0pt, minimum width=4pt]
\tikzstyle{edge} = [draw,thick,-]
\centering

\begin{tikzpicture}[scale=0.7]
\begin{scope}[shift={(0,0)}]
\foreach \x/\y/\z in {1/0/a,2.2/1.2/b,-0.2/1.2/c,1/1.2/e,1/2.4/d}{
\node[vertex] (\z) at (\x,\y){};}

\foreach \x/\y in {a/c,a/b,b/d,d/e,d/c,c/e,a/e,b/e}{\path[edge] (\x) -- (\y);}

\node[ver] () at (1,-0.7){6(4)};
\end{scope}

\begin{scope}[shift={(4,0)}]
\foreach \x/\y/\z in {0.6/0/a,1.8/0/b,2.5/1.3/c,1.2/2.4/d,-.2/1.3/e,1.2/1.15/f}{\node[vertex] (\z) at (\x,\y){};}

\foreach \x/\y in {a/e,a/b,b/c,c/d,d/e,a/f,b/f,c/f,d/f,e/f}{\path[edge] (\x) -- (\y);}

\node[ver] () at (1.5,-0.7){7(5)};

\end{scope}

\begin{scope}[shift={(7.5,0)}]
\foreach \x/\y/\z in {2/0/a,0.5/1.5/b,1.5/1.5/c,2.5/1.5/d,3.5/1.5/e,2/3/f}{
\node[vertex] (\z) at (\x,\y){};}

\foreach \x/\y in {a/b,b/f,f/e,e/a,a/c,c/f,f/d,d/a,b/c,c/d,d/e}{\path[edge] (\x) -- (\y);}

\node[ver] () at (2,-0.7){7(4)};
\end{scope}

\end{tikzpicture}
\caption{All possible types of $D_v{u}$, where $u$ is a non-singular vertex in $K$ and $d(u)\leq 7$.}\label{fig:1}
\end{figure}

\begin{lemma}\label{lemma:geq 4}
Let $K\in \mathcal {R}$, and $uv$ be an edge in $K$,  where $u$ is a non-singular vertex. Suppose $\lk (uv)$ contains at most one singular vertex.

\begin{enumerate}[$(i)$]
\item If $d(u)=6$, then $d(v)\geq 9$.

\item If $d(u)=7$ and $D_{v}u$ is of type $7(5)$, then $d(v)\geq 11$.

\item If $d(u)=7$ and $D_{v}u$ is of type $7(4)$, then $d(v)\geq 8$.  
\end{enumerate} 
\end{lemma}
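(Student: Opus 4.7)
Plan: The proof is a direct vertex-count in $\lk (v,K)$, based on the following framework.

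Because $u$ is non-singular, $|\lk (u,K)|\cong\mathbb{S}^{2}$ and $D_vu=\lk (u)-\Star (v,\lk (u))$ is a triangulated $2$-disk whose boundary is the cycle $\lk (uv)$; by Figure \ref{fig:1}, $D_vu$ has $1$, $1$, $2$ interior vertices in cases $(i)$, $(ii)$, $(iii)$ respectively.

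First I invoke Lemma \ref{lemma:nonempty}, which applies symmetrically because $u$ is non-singular, to conclude that in each case at least one interior vertex of $D_vu$ lies in $\lk (v)$; in cases $(i)$ and $(ii)$ the unique interior vertex (call it $e$ or $f$) is automatically in $\lk (v)$.

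Next, for any interior vertex $z\in\lk (v)$ I show that $z$ is not adjacent in $\lk (v)$ to $u$ (otherwise $z\in\lk (uv)$), and not adjacent to any non-singular $w\in\lk (uv)$. The latter uses that $zw\in\lk (u)$ by the structure of $D_vu$; if $zw\in\lk (v)$ as well, then $uvw,uzw,vzw\in K$ and so $\partial (uvz)\subset\lk (w)$. Since $|\lk (w)|\cong\mathbb{S}^{2}$ for non-singular $w$, Lemma \ref{lemma:missing-triangle} forces $uvz\in K$, contradicting $z\notin\lk (uv)$. Combined with Lemma \ref{lemma:d>4}, which gives $d(vz)\geq 4$, the cycle $\lk (vz)\subset\lk (v)$ therefore contributes at least $4-k$ new vertices, where $k\in\{0,1,2\}$ accounts for the at most one possibly singular vertex $w_0$ of $\lk (uv)$ and, in case $(iii)$ only, the other interior vertex of $D_vu$.

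Case $(i)$: the count $1+4+1+3=9$ yields $d(v)\geq 9$. Case $(iii)$: whether both or only one of the interior vertices lies in $\lk (v)$, the count $1+4+2+2$ or $1+4+1+3$ gives $d(v)\geq 9\geq 8$.

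Case $(ii)$ is the delicate one. If $\lk (uv)$ has no singular vertex, the count $1+5+1+4=11$ is immediate. If $\lk (uv)$ contains a singular $w_0$, the sub-complex $\lk (u)\cap\lk (v)-\lk (uv)$ is either $\{f\}$ or the semi-open edge $(w_0,f]$, matching the hypothesis of Lemma \ref{lemma:atleast} with $w=f$ and $t_1=w_0$. Applied with $t$ chosen to be whichever of $v$ and $w_0$ is the singular vertex required by that lemma in the given global singularity configuration of $K$ (a short case analysis on the singularities of $K$, together with the singular-degree bounds of Lemmas \ref{lemma:degree-singular} and \ref{lemma:degree-singular 2}, guarantees that a valid $t$ exists), it yields $d(vf)\geq d(uv)=5$. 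Removing the possibly old $w_0$ from $\lk (vf)$ leaves at least $4$ new vertices, so $d(v)\geq 1+5+1+4=11$.

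The main obstacle is case $(ii)$ in the sub-case where a singular vertex of $\lk (uv)$ is a neighbour of $f$ in $\lk (v)$: without Lemma \ref{lemma:atleast} the na\"\i ve count stops at $10$, and the structural sharpening $d(vf)\geq 5$ coming from that lemma is what delivers the final vertex needed to reach $11$.
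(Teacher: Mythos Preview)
Your overall strategy matches the paper's: in each case you locate an interior vertex $z$ of $D_vu$ that lies in $\lk (v)$, argue that $z$ cannot be adjacent in $\lk (v)$ to $u$ or to non-singular members of $\lk (uv)$, and then count the vertices of $\lk (vz)$ to produce new neighbours of $v$. Two points, however, need correction.

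In case $(iii)$ your assertion that ``$zw\in\lk (u)$ by the structure of $D_vu$'' is false: in the $7(4)$ disk each interior vertex is adjacent to only \emph{three} of the four boundary vertices (inspect Figure~\ref{fig:1}). Consequently the fourth boundary vertex is not excluded from $\lk (vz)$ by the missing-triangle argument, and your parameter $k$ can reach $3$, not $2$. The paper acknowledges this: it names the three boundary neighbours $a,b,c$ of its interior vertex $w$, excludes only those, and obtains $d(v)\geq 8$ (not $9$). Your overclaim of $9$ is therefore unjustified, though the bound $8$ required by the lemma still follows once the bookkeeping is repaired.

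In case $(ii)$ your plan to invoke Lemma~\ref{lemma:atleast} by taking $t$ to be ``whichever of $v$ and $w_0$ is singular'' breaks down when $v$ is non-singular: setting $t=w_0$ in that lemma yields information about $d(w_0 f)$ or $d(u f)$ relative to $d(uw_0)$, never about $d(vf)$, so no case analysis on the global singularities delivers what you need. The paper simply applies Lemma~\ref{lemma:atleast} with $t=v$ and $u=u$, irrespective of whether $v$ is singular. This is legitimate because the proof of Lemma~\ref{lemma:atleast} uses only the non-singularity of its vertex $u$ (required for the edge contraction via Lemma~\ref{lemma:homeomorphic}); the singularity of $t$ is never invoked. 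Once you make this observation, your argument for $(ii)$ coincides with the paper's and gives $d(vf)\geq 5$, hence $d(v)\geq 11$.
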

\begin{proof}
$(i)$ Let $d(u)=6$. Then $D_{v}u$ is of type $6(4)$. Suppose $V(\lk (uv))$ = $\{p_1,p_2,p_3,p_4\}$. It follows from Lemma \ref{lemma:open edge} that $\lk (u)\cap \lk (v)\setminus\lk (uv)$  contains exactly one vertex, say $w$. Then $up_i{w}\in K$ for $1\leq i \leq 4$. If some non-singular vertex $p_i\in \lk (uv)\cap \lk (vw)$, then $uvp_i,up_iw,vp_iw\in K$. This implies $\partial(uvw)\subset \lk (p_i)$. 
It follows from Lemma $\ref{lemma:missing-triangle}$ that $uvw\in K$. This is a contradiction.  Therefore, $\lk (uv)\cap \lk (vw)$ does not contain any non-singular vertex. Since $\lk (uv)$ contains at most one singular vertex, $\lk (vw)$ contains at least three more vertices other than the vertices of $\lk (uv)$. Thus, $d(v)\geq 9$.

$(ii)$ Let $d(u)=7$ and $D_{v}u$ be of type $7(5)$. Suppose $V(\lk (uv))=\{p_1,p_2,p_3,p_4,p_5\}$. Since $\lk (u)\cap \lk (v)\setminus\lk (uv)\neq\emptyset$, it  contains exactly one vertex, say $w$. Therefore $up_i{w}\in K$ for $1\leq i\leq 5$. Since $w\in \lk (v)$, it follows from Lemma \ref{lemma:atleast} that $d(vw)\geq 5$. If possible, let  there be a non-singular vertex $p_i\in \lk (uv)\cap \lk (vw)$. Then $\partial(uvw)\subset \lk (p_i)$. It follows from Lemma \ref{lemma:missing-triangle} that $uvw\in K$, which is a contradiction. Therefore, $\lk (uv)\cap \lk (vw)$ does not contain any non-singular vertex. Since $\lk (uv)$ contains at most one singular vertex, $\lk (vw)$ contains at least four more vertices other than the vertices of $\lk (uv)$. Thus, $d(v)\geq 11$.

$(iii)$ Let $d(u)=7$ and $D_{v}u$ be of type 7(4) for some vertex $v\in \lk (u)$. Then Lemma  \ref{lemma:open edge} implies that $\lk (u)\cap \lk (v)\setminus\lk (uv)$ contains  a vertex,  say $w$. It follows from Figure \ref{fig:1} that $d(uv)=4$ and $w$ is connected to 3 vertices of $\lk (uv)$. Let $V(\lk (u))=\{a,b,c,d,e,w,v\}$, $V(\lk (uv))=\{a,b,c,d\}$, and $w$ be connected to the vertices $a,b,c$ in $\lk (uv)$. Since $\lk (uv)$ contains at most one singular vertex, without loss of generality, let $a$ be the singular vertex (if any). It follows from lemma \ref{lemma:missing-triangle} that $b,c\not\in \lk (uw)\cap \lk (vw)$. Therefore, $\lk (vw)$ contains at least $2$ vertices other than the vertices of $\lk (uv)$. Thus, $d(v)\geq 8$. 
\end{proof}

\begin{lemma} \label{lemma:adjacent}
	Let $K\in \mathcal{R}$, and $uv$ be an edge in $K$, where $u$ is a non-singular vertex. 
\begin{enumerate}[$(i)$]	
\item If $d(u)=6$, then $d(v)\geq 8$.
\item  If $d(u)=7$ and $D_v{u}$ is of type $7(5)$, then $d(v)\geq 10$.
\end{enumerate}	
\end{lemma}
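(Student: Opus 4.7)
My plan is to mirror the proof of Lemma \ref{lemma:geq 4}, losing one vertex in each bound because $\lk(uv)$ may now contain up to two singular vertices instead of at most one. Since $u$ is non-singular, $|\lk(u)|\cong\mathbb{S}^2$ and the combinatorial type is forced: in part $(i)$, $\lk(u)$ is the boundary of the octahedron, so $d(uv)=4$ with $V(\lk(uv))=\{p_1,p_2,p_3,p_4\}$, and the unique vertex $w$ antipodal to $v$ in $\lk(u)$ is adjacent to all four $p_i$; in part $(ii)$, the $7(5)$-type forces $\lk(u)$ to be the suspension of a $5$-cycle with apexes $v$ and $w$, so $d(uv)=5$, $V(\lk(uv))=\{p_1,\dots,p_5\}$, and $w$ is adjacent to each $p_i$. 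Applying Lemma \ref{lemma:nonempty} with the roles of $u$ and $v$ interchanged (using non-singularity of $u$) gives $\lk(u)\cap\lk(v)-\lk(uv)\neq\emptyset$, and since $w$ is the unique candidate we conclude $w\in\lk(v)$, i.e., $vw\in K$.

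The heart of the argument is to establish $d(vw)\geq d(uv)$. In part $(i)$ this is immediate from Lemma \ref{lemma:d>4}. In part $(ii)$ I plan to imitate the central retriangulation plus edge contraction trick used in Lemma \ref{lemma:atleast}: centrally retriangulate $\Star(vw)$ with a new center $v^\ast$ to obtain $K'$ with $f_0(K')=f_0(K)+1$ and $f_1(K')=f_1(K)+d(vw)+1$. After this retriangulation, $w$ is no longer in $\lk(v,K')$ since the edge $vw$ is destroyed, while the new vertex $v^\ast$ is not adjacent to $u$ in $K'$ because $u\notin\lk(vw,K)$ (as $uvw\notin K$). Hence $\lk(u,K')\cap\lk(v,K')=\lk(uv,K')$, so Lemma \ref{lemma:homeomorphic} lets us contract $uv$ in $K'$ via the spherical link of $u$, producing $K''$ with $|K''|\cong |K|$ and $g_2(K'')=g_2(K)+d(vw)-d(uv)$. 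Since $K''$ arises from $K$ by an instance of operation $(D)$ of Definition \ref{definition:operation}, the assumption $K\in\mathcal{R}$ forces $d(vw)\geq d(uv)=5$.

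With $d(vw)\geq d(uv)=:n$ in hand, I count vertices of $\lk(v)$. Any non-singular $p_i\in\lk(vw)$ would give $\partial(uvw)\in\lk(p_i)$, and Lemma \ref{lemma:missing-triangle} would then force $uvw\in K$, contradicting $w\notin\lk(uv)$. So $\lk(vw)\cap V(\lk(uv))$ consists only of singular vertices, and since $K$ has at most two singular vertices in total, at most two of the $p_i$ lie in $\lk(vw)$. Hence $\lk(vw)$ contributes at least $d(vw)-2$ vertices outside $V(\lk(uv))$, and these are distinct from $\{u,p_1,\dots,p_n,w\}\subseteq V(\lk(v))$ (note $u\notin\lk(vw)$ and $w\notin\lk(vw)$). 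Therefore $d(v)\geq (n+2)+(d(vw)-2)=n+d(vw)$, giving $d(v)\geq 4+4=8$ in part $(i)$ and $d(v)\geq 5+5=10$ in part $(ii)$.

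The main subtlety, and the only step that requires genuine verification rather than bookkeeping, is checking the intersection condition $\lk(u,K')\cap\lk(v,K')=\lk(uv,K')$ after the central retriangulation in part $(ii)$; this is what legitimizes the subsequent edge contraction and, via operation $(D)$, forces $d(vw)\geq 5$.
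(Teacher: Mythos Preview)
Your proof is correct and follows essentially the same route as the paper's. The paper argues by case analysis on the number of singularities in $\lk(uv)$: with at most one singularity it cites Lemma~\ref{lemma:geq 4} to get the stronger bounds $d(v)\ge 9$ and $d(v)\ge 11$, and with two singularities it says ``from the proof of Lemma~\ref{lemma:geq 4}'' to get the weaker bounds $8$ and $10$. You instead give a unified argument for both parts, allowing up to two singular $p_i$ throughout; the key mechanism---$w\in\lk(v)$, the inequality $d(vw)\ge d(uv)$, and the exclusion of non-singular $p_i$ from $\lk(vw)$ via Lemma~\ref{lemma:missing-triangle}---is identical. One small point worth noting: in part~$(ii)$ the paper's proof of Lemma~\ref{lemma:geq 4} invokes Lemma~\ref{lemma:atleast} for $d(vw)\ge 5$, whose stated hypotheses do not literally match this setting (it is phrased with a singular vertex $t$); your explicit retriangulation-plus-contraction argument is therefore a cleaner justification of that step, and your verification that $\lk(u,K')\cap\lk(v,K')=\lk(uv,K')$ (using $u\notin\lk(vw)$ and $v^\ast\notin\lk(u)$) is exactly what is needed to legitimize the appeal to operation~$(D)$.
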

	
\begin{proof}
The proof follows by the similar arguments as in the proof of Lemma \ref{lemma:geq 4}.
\end{proof}

\begin{definition}\label{edge-weight}{\rm
Let $K\in \mathcal {R}$. First, we fix a singular vertex $t$ such that $d(t)\geq 8$ (cf. Lemmas \ref{lemma:degree-singular} and \ref{lemma:degree-singular 2}) and $g_2(\lk (t,K)) \geq g_2(\lk (v,K))$ for any other vertex $v$ in $K$. Let $u$ be a vertex in $K$. Then for every vertex $v\in \lk(u,K)$, define
the {\em weight $\lambda(u,v)$} of the vertex $u$ with respect to $v$ as follows:
$$
\begin{array}{lcccl}
	\lambda(u,v) & = & \frac{2}{3} & \mbox{ if } & d(u)=6, \mbox{ and either } u \not \in \Star (t) \mbox{ or } v\not \in \Star (t),\medskip\\
	
	& = & \frac{3}{4} & \mbox{ if } & d(u)=7, d(v,\lk (u))=5, \mbox{ and either } u \not \in \Star (t) \mbox{ or } v\not \in \Star (t),\medskip\\
	
	& = & \frac{1}{2} & \mbox{ if } & d(u)=7, d(v,\lk (u))=4, \mbox{ and either } u \not \in \Star (t) \mbox{ or } v\not \in \Star (t),\medskip\\
	
	& = & \frac{1}{2} & \mbox{ if } & d(u)=8, \mbox{ and either } u \not \in \Star (t) \mbox{ or } v\not \in \Star (t),\medskip\\

	& = & 1-\lambda(v,u) & \mbox{ if } & d(u)\geq 9, d(v)\leq 8, \mbox{ and either } u \not \in \Star (t) \mbox{ or } v\not \in \Star (t),\medskip\\ 

 	& = & \frac{1}{2} & & \mbox{otherwise}. 
	
\end{array}
$$}
\end{definition}

\noindent Then from Lemmas \ref{lemma:degree-singular 2}, \ref{lemma:geq 4} and \ref{lemma:adjacent}, it follows that $\lambda(u,v)+\lambda(v,u)=1$ for every edge $uv$ of $K$. For a vertex $u\in K$, we define the {\em weight} of the vertex $u$ as $\mathcal{W}_u:=\sum_{v \in \lk (u)}\lambda(u,v).$ For a vertex $u\in \lk (t)$, we define the {\em outer weight} of the vertex $u$ as $$\mathcal{O}_u:=\displaystyle{\sum_{\substack{v\in \lk (u)\\ uv\not \in \lk (t)}}}\lambda(u,v).$$

\begin{lemma}\label{lemma:O(t_1)}
Let $K\in \mathcal {R}$, and $t$ be the singular vertex in $K$ as in Definition $\ref{edge-weight}$.  If  $u\in K \setminus \Star (t,K)$, then  $\mathcal{W}_u=\sum_{v \in \lk (u)}\lambda(u,v)\geq 4$.
\end{lemma}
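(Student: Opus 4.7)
I would prove the lemma by case analysis on $d(u)$. Since $u\notin\Star (t,K)$ we have $t\notin\lk (u)$, so for every edge $uv$ at $u$ the link $\lk (uv)$ avoids $t$ and contains at most one singularity (namely $t_{1}$, if it exists). Hence Lemma~\ref{lemma:geq 4} is available on every edge incident to $u$, and every $\lambda(u,v)$ falls under one of the first five clauses of Definition~\ref{edge-weight}.

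If $d(u)=6$, Lemma~\ref{lemma:geq 4}(i) gives $d(v)\geq 9$ for every neighbour $v$, so each $\lambda(u,v)=\tfrac{2}{3}$ and $\sum_v \lambda(u,v)=4$. If $d(u)=7$ then $u$ is non-singular (Lemma~\ref{lemma:degree-singular 2}), $\lk (u)$ is a $2$-sphere on $7$ vertices with degree sum $30$, and Lemma~\ref{lemma:d>4} forces every $d(v,\lk (u))\geq 4$; the only candidate degree sequences are $(4^{5},5,5)$ and $(4^{6},6)$. The sequence $(4^{6},6)$ is excluded because $D_{v}u$ at the degree-$6$ vertex would be a hexagonal disk with no interior vertex, whose three forced diagonals contradict Lemma~\ref{lemma:missing-triangle}. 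Hence $\sum_v \lambda(u,v)=5\cdot\tfrac{1}{2}+2\cdot\tfrac{3}{4}=4$. For $d(u)=8$ every edge contributes $\tfrac{1}{2}$, giving $\sum_v \lambda(u,v)=4$.

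For $n:=d(u)\geq 9$ (I treat the non-singular case; $u=t_{1}$ is analogous via the non-sphere Euler identity on $\lk (u)$), I partition the neighbours as $A=\{v:d(v)=6\}$, $B=\{v:d(v)=7,\ d(uv)=5\}$, and the rest, so that
\[
\sum_{v\in \lk (u)}\lambda(u,v)=\frac{n}{2}-\frac{|A|}{6}-\frac{|B|}{4}.
\]
The structural inputs I will use are: (a)~for $v\in A$, $\lk (v)$ is the octahedron so $d(v,\lk (u))=4$, and Lemma~\ref{lemma:adjacent}(i) shows that the $\lk (u)$-neighbours of $v$ have $d\geq 8$, so $A$ is an independent set in $\lk (u)$; (b)~Lemma~\ref{lemma:adjacent}(ii) forces $n\geq 10$ whenever $|B|>0$; (c)~Lemma~\ref{lemma:missing-triangle} forbids any diagonal in $\lk (u)-\Star (w,\lk (u))$ for every $w\in\lk (u)$. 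For $n=9$ these give $|B|=0$, and I would bound $|A|\leq 3$: planarity of $\lk (u)$ eliminates $|A|=5$ (which would embed $K_{5,4}$); and $|A|=4$ forces the four ``miss'' non-$A$-vertices to be distinct (else three $A$-vertices would share three non-$A$-neighbours, producing $K_{3,3}$), so the unique non-$A$ vertex $B_{0}$ outside the image of the miss-map is adjacent to every $A$-vertex. Its cyclic link in $\lk (u)$ must then alternate $A$-vertex, non-$A$-vertex, $A$-vertex, $\ldots$, forcing $d(B_{0},\lk (u))=8$; but then $\lk (u)-\Star (B_{0},\lk (u))$ is a triangulated octagon with no interior vertex, forcibly containing five diagonals and contradicting~(c). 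Hence $|A|\leq 3$ and $\sum_v\lambda(u,v)\geq \tfrac{9}{2}-\tfrac{1}{2}=4$. For $n\geq 10$ the same scheme---independence plus $K_{3,3}$-freeness bounding $|A|$, and the $5$-cycle version of the diagonal argument around each $v\in B$ bounding $|B|$---will give $2|A|+3|B|\leq 6n-48$, so $\sum_v\lambda(u,v)\geq 4$. The hard part will be this last step: for $n\geq 10$ the classes $A$ and $B$ can share vertices of $\lk (u)$ and their planarity and diagonal constraints interact, so the joint bound on $|A|$ and $|B|$ demands a careful simultaneous analysis rather than two independent counts.
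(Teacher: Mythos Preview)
Your treatment of $d(u)\le 8$ agrees with the paper. For $n:=d(u)\ge 9$ you take a genuinely different route, and it leaves real gaps.

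The paper's argument for $n\ge 9$ is entirely local and never looks at $\lk(u)$ globally. One picks any single bad neighbour $v\in A\cup B$. Since $t\notin\lk(u)$ one checks $v\neq t,t_1$, so $\lk(v)$ is the octahedron (type $6(4)$) or the $7$-vertex sphere of type $7(5)$; in either case $\lk(v)\setminus\{u\}$ is the cycle $\lk(uv)$ together with one apex $z$, and Lemma~\ref{lemma:open edge} forces $z\in\lk(u)$. Then Lemma~\ref{lemma:adjacent}(i) (for $6(4)$), respectively Lemmas~\ref{lemma:geq 4}(iii) and~\ref{lemma:adjacent}(ii) (for $7(5)$), show every vertex of $\lk(v)\setminus\{u\}$ has degree $\ge 8$. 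Thus one bad neighbour already produces at least five vertices of $\lk(u)$ outside $A\cup B$, so $|A|+|B|\le n-5$ and $\sum_v\lambda(u,v)\ge \tfrac{5}{2}+\tfrac{n-5}{4}=\tfrac{n+5}{4}\ge 4$ for all $n\ge 11$. For $n=10$ one has $|B|=0$ (Lemma~\ref{lemma:geq 4}(ii) in fact gives $n\ge 11$ whenever $|B|>0$, stronger than the $n\ge 10$ you quote from Lemma~\ref{lemma:adjacent}(ii)) and $|A|\le 5$ already suffices. Only $n=9$ needs one further local step, a short look at $\lk(uz)$ producing a sixth good vertex.

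Your approach instead analyses $\lk(u)$ globally via $K_{3,3}$-freeness and diagonal-free antistars. For $n=9$ with $u$ non-singular this can be made to work, but the ``hard part'' you flag for $n\ge 10$ is left open and is in fact unnecessary: the one-bad-neighbour observation above dissolves it. More seriously, the case $u=t_1\notin\Star(t)$ is a genuine gap. For $K\in\mathcal{R}$ the surface $\lk(t_1)$ need not be a sphere (and in general not $\mathbb{RP}^2$ either; its genus is only bounded by that of $\lk(t)$), so it need not be planar and your $K_{5,4}$/$K_{3,3}$ and diagonal arguments do not transfer; the promised ``Euler identity'' analogy does not recover them. The paper's argument sidesteps this completely because it only uses the structure of $\lk(v)$ for the necessarily non-singular bad vertex $v$, never the structure of $\lk(u)$.
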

	\begin{proof}
If $d(u) \leq 7$, then it follows from Lemmas \ref{lemma:degree-singular} and  \ref{lemma:degree-singular 2} that $u$ is a non-singular vertex. If $d(u)=6$, then for any vertex $v\in \lk (u)$, $\lambda(u,v)=2/3$. Therefore, $ \sum_{v \in \lk (u)}\lambda(u,v)=6\times 2/3=4.$ If  $d(u)=7$, then for any vertex $v\in \lk (u)$, $D_v{u}$ is of type either $7(5)$ or $7(4)$. It follows from Figure \ref{fig:1} that in both cases, $\lk (u)$ contains five vertices whose $D_v{u}$ is of type $7(4)$ and two vertices for which $D_v{u}$ is of type $7(5)$. Therefore $\sum_{v \in \lk (u)}\lambda(u,v)=5\times 1/2 + 2\times 3/4 = 4$.
	
	 If $d(u)=8$, then for every vertex $v\in \lk (u)$, $\lambda(u,v)=1/2$. Therefore $\sum_{v \in \lk (u)}\lambda(u,v)= 4$. 

If $d(u)=9$,  then it follows from Lemma $\ref{lemma:geq 4}$ that $\lk (u)$ contains no singular vertex $v$ for which $\lambda(u,v)=1/4$ holds. If possible, let $v$ be the other singular vertex such that  $\lambda(u,v)=1/4$. Then $d(v)=7$, $d(uv)=5$ and $t\in \lk (v)$ such that $d(vt)=6$. This implies $t\in \lk (u)$, a contradiction as $u\not\in \Star (t)$. If for every vertex $v\in \lk (u)$, $\lambda(u,v)=1/2$, then we are done. Suppose there is a vertex $v\in \lk (u)$ such that $\lambda(u,v)=1/3$, i.e., $d(v)=6$. Let $\lk (v)=B_{u_1,u_2,u_3,u_4}(u,z)$. Then Lemma \ref{lemma:open edge} implies that $z\in \lk (u)$. It follows from Lemma \ref{lemma:adjacent} that the five vertices $u_1,u_2,u_3,u_4,z \in \lk (u)$ have a degree of at least 8. Further, $d(uz)\geq 4$, and Lemma \ref{lemma:interior 2} suggests that one vertex from the set $\{u_1,u_2,u_3,u_4\}$ must be a singular vertex; otherwise, $d(u)\geq 10$. Let's assume that $u_1$ is the singular vertex. Therefore, $\lk (uz)$ forms a 4-cycle, denoted as $C_4(u_1,z_1,z_2,z_3)$. As there are no two adjacent vertices in $C_4(u_1,z_1,z_2,z_3)$ with a degree of 6, we conclude that at least one of the vertices $z_1,z_2,$ or $z_3$ has a degree greater than or equal to 8. Hence, there are more than five vertices in $\lk (u)$ that contribute a value of $1/2$ to $\lambda$. Consequently, we deduce that $\sum_{v \in \lk (u)}\lambda(u,v)\geq 4$.

If $d(u)=10$, then by the same arguments as above, we have at least five vertices in $ \lk (u)$ that contribute a value of $1/2$ to $\lambda$. Hence,  $\sum_{v \in \lk (u)}\lambda(u,v)\geq 4$.

Finally, consider $d(u)\geq 11$. If there exists a vertex $v\in \lk (u)$ such that $\lambda(u,v)=1/3$ or $1/4$, then $D_{u}v$ must be of the type $6(4)$ or $7(5)$, respectively. Using similar reasoning as above, we conclude that there are at least five vertices in $\lk (u)$ that contribute a value of $1/2$ to $\lambda$. Therefore,  $\sum_{v \in \lk (u)}\lambda(u,v)\geq 4$.
\end{proof}

\begin{lemma}\label{lemma:f1(K)(2)}
Let $K\in \mathcal {R}$, and $t$ be the singular vertex in $K$ as in Definition $\ref{edge-weight}$. Then $$g_2(K) \geq g_2(\lk (t))+ \sum_{u \in \lk (t)}\mathcal{O}_u.$$
\end{lemma}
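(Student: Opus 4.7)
The plan is to reduce the target inequality to a statement about vertex weights on the part of $K$ lying outside $\Star(t)$, which is then handled directly by Lemma \ref{lemma:O(t_1)}. By Lemma \ref{lemma:f1(st(t))} we have $f_1(\Star(t)) = 4f_0(\Star(t)) - 3\chi(\lk(t)) - 4$, so the inequality we want is equivalent to
\[
f_1(K) - f_1(\Star(t)) \;\geq\; 4\bigl(f_0(K) - f_0(\Star(t))\bigr) + \sum_{u \in \lk(t)} \mathcal{O}_u.
\]
Write $V_{\mathrm{out}} := V(K) \setminus V(\Star(t))$, $m := |V_{\mathrm{out}}|$, and partition the edges of $K$ outside $\Star(t)$ into three sets: $E_1$ (both endpoints in $V(\lk(t))$), $E_2$ (one endpoint in $V(\lk(t))$ and one in $V_{\mathrm{out}}$), and $E_3$ (both endpoints in $V_{\mathrm{out}}$). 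Then $f_1(K) - f_1(\Star(t)) = |E_1| + |E_2| + |E_3|$.

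Next, I would evaluate $\sum_{u \in \lk(t)} \mathcal{O}_u$ edge by edge, using the key identity $\lambda(u,v) + \lambda(v,u) = 1$ which was established right after Definition \ref{edge-weight}. An edge in $E_1$ has both endpoints in $V(\lk(t))$, so it contributes to the outer weight of each endpoint, giving a total contribution $\lambda(u,v) + \lambda(v,u) = 1$. An edge in $E_2$ contributes only through its $\lk(t)$-endpoint, contributing $\lambda(u,v)$. Edges in $E_3$ contribute nothing. Hence
\[
\sum_{u \in \lk(t)} \mathcal{O}_u \;=\; |E_1| + \sum_{uv \in E_2} \lambda(u,v),
\]
where $u$ denotes the endpoint in $V(\lk(t))$. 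Substituting and cancelling $|E_1|$, the target inequality becomes
\[
\sum_{uv \in E_2} \bigl(1 - \lambda(u,v)\bigr) + |E_3| \;=\; \sum_{uv \in E_2} \lambda(v,u) + |E_3| \;\geq\; 4m.
\]

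Now I would recognize the left-hand side as $\sum_{v \in V_{\mathrm{out}}} \mathcal{W}_v$. Indeed, for any $v \in V_{\mathrm{out}}$ no edge $vt$ exists (otherwise $v \in \lk(t)$), so every neighbor of $v$ lies either in $V(\lk(t))$ (producing an edge of $E_2$) or in $V_{\mathrm{out}}$ (an edge of $E_3$). Summing $\mathcal{W}_v$ over $v \in V_{\mathrm{out}}$, each $E_3$ edge is visited at both endpoints, contributing $\lambda(v,w) + \lambda(w,v) = 1$ in total, while each $E_2$ edge is visited once with weight $\lambda(v,u)$. Thus
\[
\sum_{v \in V_{\mathrm{out}}} \mathcal{W}_v \;=\; |E_3| + \sum_{uv \in E_2} \lambda(v,u).
\]
Applying Lemma \ref{lemma:O(t_1)} vertex-by-vertex gives $\sum_{v \in V_{\mathrm{out}}} \mathcal{W}_v \geq 4m$, completing the proof.

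The one subtle point is correctly matching up the contribution of each edge to both $\sum_u \mathcal{O}_u$ and $\sum_v \mathcal{W}_v$; beyond this bookkeeping the argument is formal, and no case analysis on specific weight values is needed because the identity $\lambda(u,v) + \lambda(v,u) = 1$ does all the work. The real content sits in Lemma \ref{lemma:O(t_1)}, and the present lemma is essentially a clean double-counting that packages that content into a statement purely about $f_1(K)$.
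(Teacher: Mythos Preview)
Your proof is correct and follows essentially the same approach as the paper's: both decompose $f_1(K)$ into $f_1(\Star(t))$ plus the edges outside $\Star(t)$, use $\lambda(u,v)+\lambda(v,u)=1$ to split those edge-counts into $\sum_{u\in\lk(t)}\mathcal{O}_u$ and $\sum_{v\notin\Star(t)}\mathcal{W}_v$, and then invoke Lemma~\ref{lemma:O(t_1)} to bound the latter by $4\,f_0(K-\Star(t))$. Your explicit $E_1,E_2,E_3$ partition is just a slightly more granular bookkeeping of the same double-counting the paper carries out with nested sums.
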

\begin{proof}
We know that $f_1(\Star (t))=f_1(\lk (t))+f_0(\lk (t))=g_2(\lk (t))+4 f_0(\lk (t))-6=g_2(\lk (t))+4 f_0(\Star (t))-10$.
 It follows from Lemma \ref{lemma:O(t_1)} that $\sum_{u \not \in \Star (t)}\mathcal{W}_u\geq 4f_0(K \setminus \Star (t))$. Thus
	$$
	\begin{array}{lcl}
		f_1(K) & = & f_1(\Star (t)) + f_1(K \setminus \Star (t))\smallskip\\
		
		& = & f_1(\Star (t))+ \displaystyle{\sum_{uv\in (K \setminus \Star (t))}} [\lambda(u,v)+\lambda(v,u)]\smallskip\\
		
		
		& = & f_1(\Star (t))+ \displaystyle{ \sum_{u \in \lk (t)}\sum_{\substack{v\in \lk (u) \\ uv\not \in \Star (t)}}} \lambda(u,v) + \sum_{u \not \in \Star (t)}\sum_{v\in \lk (u)}\lambda(u,v)\smallskip\\
		
		& = &  g_2(\lk (t))+4 f_0(\Star (t))-10 + \displaystyle{ \sum_{u \in \lk (t)}}\mathcal{O}_u + \sum_{u \not \in \Star (t)}\mathcal{W}_u\smallskip\\
		
		
		& \geq & g_2(\lk (t))+4 f_0(\Star (t))-10 + \displaystyle{ \sum_{u \in \lk (t)}}\mathcal{O}_u + 4f_0(K \setminus \Star (t))\smallskip\\
		
		& = & 4f_0(K) + g_2(\lk (t))-10 + \displaystyle{ \sum_{u \in \lk (t)}}\mathcal{O}_u. 		
	\end{array}
	$$
Therefore, $g_2(K) \geq g_2(\lk (t))+ \sum_{u \in \lk (t)}\mathcal{O}_u$. This proves the result.
\end{proof}

\begin{lemma} \label{lemma:outside-edge(2)}
Let $K\in \mathcal {R}$, and $t$ be the singular vertex in $K$ as in Definition $\ref{edge-weight}$. Let $u \in \lk (t)$ be a non-singular vertex such that $\lk (u)=B_{u_1,\dots,u_m}(t;z)$. If $u_i$ is a non-singular vertex, for some  $i\in \{1,\dots,m\}$, then $zu_i \not \in \Star (t)$.
\end{lemma}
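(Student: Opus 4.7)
The plan is to argue by contradiction at the non-singular vertex $u_i$. Suppose $zu_i \in \Star(t,K)$; equivalently, the triangle $tzu_i$ belongs to $K$, so $tz$ is an edge of $\lk(u_i,K)$. I will show that $tz$ is then a diagonal of $\lk(u_i,K)-\Star(u,\lk(u_i,K))$, contradicting Lemma \ref{lemma:missing-triangle}.

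First I read off $\lk(uu_i,K)$ from the bipyramid structure $\lk(u,K)=B_{u_1,\ldots,u_m}(t;z)$. The four $2$-faces of this bipyramid incident to $u_i$ are $tu_{i-1}u_i$, $tu_iu_{i+1}$, $zu_{i-1}u_i$ and $zu_iu_{i+1}$, so the four $3$-faces of $K$ incident to $uu_i$ are the corresponding four tetrahedra. Taking links gives $\lk(uu_i,K)=C_4(u_{i-1},t,u_{i+1},z)$, a $4$-cycle in which $t$ and $z$ are \emph{antipodal}; in particular $tz$ is not an edge of this cycle.

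Since $u_i$ is non-singular, $|\lk(u_i,K)|\cong \mathbb{S}^2$, and $\Star(u,\lk(u_i,K))$ is a closed disc whose boundary cycle is $\lk(u,\lk(u_i,K))=\lk(uu_i,K)=C_4(u_{i-1},t,u_{i+1},z)$. The complement $\lk(u_i,K)-\Star(u,\lk(u_i,K))$ is therefore also a closed disc with the same boundary cycle. The edges of $\Star(u,\lk(u_i,K))$ are the four ``spokes'' $ut,uu_{i-1},uu_{i+1},uz$ together with the four cycle edges $u_{i-1}t,tu_{i+1},u_{i+1}z,zu_{i-1}$; the edge $tz$ is none of these. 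Since by hypothesis $tz\in\lk(u_i,K)$, it must be an interior edge of $\lk(u_i,K)-\Star(u,\lk(u_i,K))$ joining the two boundary vertices $t$ and $z$, i.e.\ a diagonal. This contradicts Lemma \ref{lemma:missing-triangle} applied at the non-singular vertex $u_i$.

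The only real decision in the proof---and what I view as the main point---is to pivot from $u$ to the base vertex $u_i$: the hypothesis that $t$ and $z$ are antipodal apexes in $\lk(u,K)$ translates, inside $\lk(u_i,K)$, into the statement that $t$ and $z$ are non-adjacent boundary vertices of the complementary disc, which is exactly the configuration ruled out by the no-diagonal consequence of Lemma \ref{lemma:missing-triangle}. Everything else is bookkeeping against the bipyramid combinatorics and causes no difficulty.
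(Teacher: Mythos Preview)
Your proof is correct and follows essentially the same approach as the paper: both pivot to the non-singular vertex $u_i$ and invoke Lemma~\ref{lemma:missing-triangle} there. The paper applies the first part of that lemma directly---from $tzu_i,\,zu_iu,\,u_iut\in K$ one gets $\partial(tzu)\subset\lk(u_i)$, hence $tzu\in\lk(u_i)$, forcing $z\in\lk(tu)$, a contradiction---whereas you use the equivalent ``no diagonal'' reformulation (the second part of the same lemma) after computing $\lk(uu_i)=C_4(u_{i-1},t,u_{i+1},z)$; the underlying contradiction is identical.
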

\begin{proof}
If $zu_i \in \lk (t)$ for some non-singular vertex $u_i$, then $tzu_i \in K$. Further, $zu_iu, u_iut\in K$. Thus, $\partial(tzu)\subset \lk (u_i)$, and by Lemma \ref{lemma:missing-triangle}, $tzu \in K$. This is a contradiction as $z \not \in \lk (tu)$. Therefore $zu_i \not \in \lk (t)$. Since $z,u_i\neq t$, we have $zu_i \not \in \Star (t)$.
\end{proof}

\begin{lemma}\label{lemma:<1(2)}
Let $K\in \mathcal {R}$, and $t$ be the singular vertex in $K$ as in Definition $\ref{edge-weight}$. If $u$ is a non-singular vertex in $\lk (t)$ with $\mathcal{O}_u<1$, then $\lk (u)\cap \lk (t)\setminus\lk (ut)$ contains exactly one vertex, say $z$. Moreover, if  $z$ is non-singular, then $\mathcal{O}_u=0.5$ and  $\lk (u)=B_{u_1,\dots,u_m}(t;z)$.
\end{lemma}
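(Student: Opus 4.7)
The plan is to handle the two assertions in the lemma separately, with the bipyramid structure (the second assertion) requiring substantially more work. For the first assertion, applying Lemma \ref{lemma:open edge} to the edge $ut$ (with $u$ non-singular) immediately yields at least one vertex in $\lk (u)\cap \lk (t)-\lk (ut)$. For uniqueness, note that any vertex $z'$ in this set has $z'\in \lk (t)\subseteq \Star (t,K)$, so both endpoints of $uz'$ lie in $\Star (t,K)$; the ``otherwise'' clause of Definition \ref{edge-weight} then gives $\lambda(u,z')=1/2$, and since $uz'\notin \lk (t)$ this edge contributes $1/2$ to $\mathcal{O}_u$. Two or more such vertices would push $\mathcal{O}_u\geq 1$, contradicting $\mathcal{O}_u<1$. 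Hence there is exactly one such vertex $z$.

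Now suppose $z$ is non-singular; set $m:=d(ut)$, which is at least $4$ by Lemma \ref{lemma:d>4}, and write $\lk (ut)=C_m(u_1,\dots,u_m)$. The disc $D_tu$ has boundary $\lk (ut)$, contains $z$ in its interior, and admits no diagonals by Lemma \ref{lemma:missing-triangle} applied with $v=t$. I claim $z$ is the only interior vertex. Assume for contradiction that another interior vertex $w\neq z$ exists. Uniqueness of $z$ forces $w\notin \lk (t)$, hence $w\notin \Star (t,K)$, and the edge $uw$ contributes $\lambda(u,w)$ to $\mathcal{O}_u$. Combined with $\lambda(u,z)=1/2$, the bound $\mathcal{O}_u<1$ yields $\lambda(u,w)<1/2$; consulting Definition \ref{edge-weight} (noting that $\lambda\geq 1/4$ always) forces $d(u)\geq 9$, $d(w)\in\{6,7\}$, and $d(uw)\in\{4,5\}$, so $w$ is non-singular by Lemma \ref{lemma:degree-singular 2}, and $w$ is the only additional interior vertex since any third would make $\mathcal{O}_u\geq 1$.

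Euler's formula for the triangulated disc $D_tu$ with $m$ boundary vertices and $2$ interior vertices gives exactly $m+2$ triangles; since $m\geq 4$ the no-diagonal condition forces each triangle to contain $z$ or $w$, hence $d(uz)+d(uw)-(\text{triangles containing both})=m+2$. If $zw\notin K$ then no triangle contains both and $d(uz)+d(uw)=m+2$; a fan analysis around each boundary vertex shows that any boundary vertex simultaneously adjacent to $z$ and $w$ in $\lk (u)$ would produce a triangle $u_izw$ in $D_tu$, forcing $zw\in K$. Thus the boundary neighbourhoods of $z$ and $w$ are disjoint and $d(uz)+d(uw)\leq m$, a contradiction. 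If instead $zw\in K$, the interior edge $zw$ lies in exactly two triangles of $D_tu$, so $d(uz)+d(uw)=m+4$, and every boundary vertex ends up adjacent to $z$ or to $w$. Applying Lemma \ref{lemma:open edge} to $uw$ yields a vertex $y\in \lk (u)\cap \lk (w)-\lk (uw)$; since $D_uw$ is of type $6(4)$ or $7(5)$, the link $\lk (w)$ is the bipyramid $B_{\lk (uw)}(u;y)$, so $y$ is adjacent in $\lk (w)$ to every vertex of $\lk (uw)$. Because $z,w$ exhaust the interior of $D_tu$ and $y=t$ would force $w\in \lk (t)$, the vertex $y$ must be a boundary vertex of $D_tu$ outside $\lk (uw)$, hence not adjacent to $w$ and therefore adjacent to $z$. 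This gives $yz\in \lk (u)\cap \lk (w)$ with $z\in \lk (uw)$ non-singular, contradicting Lemma \ref{lemma:interior 2} applied to $uw$.

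Consequently, $z$ is the unique interior vertex of $D_tu$. The no-diagonal condition then forces every triangle of $D_tu$ to contain $z$, yielding $\lk (uz)=C_m(u_1,\dots,u_m)=\lk (ut)$ and therefore $\lk (u)=B_{u_1,\dots,u_m}(t;z)$. The only edge contributing to $\mathcal{O}_u$ is $uz$, so $\mathcal{O}_u=\lambda(u,z)=1/2$. The main obstacle is the $zw\in K$ subcase, where one must identify the opposite apex $y$ of the bipyramid $\lk (w)$ as a boundary vertex adjacent to $z$ so that Lemma \ref{lemma:interior 2} delivers the contradiction.
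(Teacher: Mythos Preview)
Your argument follows the same route as the paper's: once you know there is at most one extra interior vertex $w$ in $D_tu$ with $d(w)\in\{6,7\}$, you identify the opposite apex of the bipyramid $\lk(w)$ as a boundary vertex $u_k$ not adjacent to $w$ in $\lk(u)$, hence adjacent to $z$, and obtain the contradiction. You even supply a step the paper leaves implicit: the paper simply writes $\lk(w)=B_{w_1,\dots,w_4,z}(u;q)$ with $z$ among the base vertices, whereas your Euler-count and fan analysis actually justify that $z\in\lk(uw)$.

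Two small imprecisions are worth tightening. First, the dichotomy should be on whether $zw\in\lk(u)$ (equivalently $uzw\in K$), not merely $zw\in K$; these are not the same, and your phrase ``the interior edge $zw$'' in the second case needs the former. Your fan argument in the first case in fact produces a triangle $u_izw\in D_tu\subset\lk(u)$, so it already proves $zw\in\lk(u)$; re-labelling the two cases accordingly repairs this with no new work. Second, when you locate $y$, the sentence ``$z,w$ exhaust the interior of $D_tu$'' does not by itself rule out $y=z$; you need the one-line observation that in the second case $z\in\lk(uw)$ while $y\notin\lk(uw)$, hence $y\neq z$. With these adjustments the proof is complete, and your final appeal to Lemma~\ref{lemma:interior 2} is equivalent to the paper's direct use of Lemma~\ref{lemma:missing-triangle} on $\lk(z)$.
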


\begin{proof}

It follows from Lemma \ref{lemma:open edge}  that there are some vertices in  $\lk (u) \cap \lk (t) \setminus \lk (ut)$. Since $\mathcal{O}_u<1$, there is only one such vertex, called $z$.

Let $\lk (tu)=C_m(u_1,u_2,\dots,u_m)$, for some $u_1,\dots,u_m\in \lk (t)$. Then by Lemma \ref{lemma:d>4}, $m \geq 4$.  Since $\lk (u)\cap \lk (t)\setminus \lk (tu)$ has only one vertex $z$, $\mathcal{O}_u\geq \lambda(u,z) = 0.5$. If $\mathcal{O}_u=0.5$, then $\lk (u) \setminus \Star (t,\lk (u))$  contains no vertex other than $z$. Therefore, $\lk (u)=B_{u_1,\dots,u_m}(t;z)$. 

If $0.5< \mathcal{O}_u <1$, then $\lk (u) \setminus  \Star (t,\lk (u))$ contains exactly two vertices $z$ and $w$ such that $\lambda(u,z) = 0.5$ and $0<\lambda(u,w)< 0.5$. This implies that $w \not \in \Star (t)$ and $\lambda(u,w)=\frac{1}{4}$ or $\frac{1}{3}$. If $\lambda(u,w)=\frac{1}{4}$, then $\lk (w)=B_{w_1,\dots,w_5}(u;q)$ and if $\lambda(u,w)=\frac{1}{3}$, then $\lk (w)=B_{w_1,\dots,w_4}(u;q)$. Since  $\lk (tu)=C_m(u_1,u_2,\dots,u_m)$ and there are exactly two vertices $z,w\in \lk (u)\setminus \Star (t,\lk (u))$, we have $d(uz)\leq m+1$.

Let $z$ be a non-singular vertex  and $\lambda(u,w)=\frac{1}{4}$ while $\lk (w)=B_{w_{1},w_2,w_{3},w_4,z}(u;q)$.  It follows from Lemma \ref{lemma:open edge} that $q=u_k$ for some $k$. Thus, $u_kw \not \in \lk (u)$, and hence $u_kz\in \lk (u)$. Therefore, $uu_kz, uwz,wu_kz\in K$. Since $\partial(uwu_k)\subset \lk (z)$, by Lemma \ref{lemma:missing-triangle}, $uwu_k \in K$. But $u_k \not \in \lk (uw)$. This is a contradiction. Thus, $\lambda(u,w)\neq\frac{1}{4}$. By the same arguments, we can show that $\lambda(u,w)\neq\frac{1}{3}$. Therefore, $\mathcal{O}_u=0.5$ and $\lk (u)=B_{u_1,\dots,u_m}(t;z)$, $m\geq 4$.
\end{proof}

\begin{lemma}\label{lemma:0.5(2)}
Let $K\in \mathcal {R}$, and $t$ be the singular vertex in $K$ as in Definition $\ref{edge-weight}$.
Let $u \in \lk (t)$ be a non-singular vertex such that $\lk (u)=B_{u_1,\dots,u_m}(t;z)$, where $z$ is a non-singular vertex. If there is a vertex $v \in \lk (u,\lk (t))$ with $\lk (v)=B_{v_1,\dots,v_k}(t;z_1)$, then $z\neq z_1$.
\end{lemma}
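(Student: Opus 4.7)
The plan is to assume $z=z_1$ for contradiction and then produce an edge contraction of $uv$ that strictly decreases $g_2$, violating condition~$(ii)$ of the definition of $\mathcal{R}$.

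After relabelling, I may take $v=u_1$, since $v\in \lk (u,\lk (t))=\lk (tu)=C_m(u_1,\dots,u_m)$, and symmetrically $u=v_1$. Reading off $\lk (uv)$ from the bipyramid link of $u$ gives the $4$-cycle $C_4(t,u_2,z,u_m)$, while doing so from the link of $v$ (using the assumption $z_1=z$) gives $C_4(t,v_2,z,v_k)$. Hence $\{u_2,u_m\}=\{v_2,v_k\}$, and I set $a=u_2=v_2$ and $b=u_m=v_k$.

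The crux is to prove that $\{u_3,\dots,u_{m-1}\}\cap\{v_3,\dots,v_{k-1}\}=\emptyset$; this is the step I expect to be the main obstacle, because one must carefully rule out every accidental coincidence $c\in\{t,a,z,b\}$. Suppose on the contrary that some vertex $c=u_i=v_j$ lies in the intersection, with $3\leq i\leq m-1$ and $3\leq j\leq k-1$. The index constraints force $c\notin\{u,v,t,a,b,z\}$; in particular, $c$ is not a vertex of $\lk (uv)=C_4(t,a,z,b)$, so $uvc\notin K$. On the other hand, $uvz\in K$ because the apex $z$ is adjacent to $v=u_1$ in the bipyramid $\lk (u)$, so the triangle $uzu_1\in K$; $ucz\in K$ because $c=u_i$ is a vertex of $\lk (uz)=C_m$, so the triangle $uzu_i\in K$; and $vcz\in K$ by the mirror argument in $\lk (v)$. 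Therefore $uv,uc,vc\in\lk (z)$, so $\partial(uvc)\subset \lk (z)$ while $uvc\notin\lk (z)$. This produces a missing $2$-simplex in $\lk (z)$, contradicting Lemma~\ref{lemma:missing-triangle} applied to the non-singular vertex $z$.

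Once the claim is established, the vertex set of $\lk (u)\cap\lk (v)$ equals $\{t,a,z,b\}$, the vertex set of $\lk (uv)$. Since $m,k\geq 4$ by Lemma~\ref{lemma:d>4}, the apex pair $\{t,z\}$ and the base pair $\{a,b\}$ are each non-edges in both bipyramids, so the edges of $\lk (u)$ and of $\lk (v)$ supported on $\{t,a,z,b\}$ are exactly the four edges of $C_4(t,a,z,b)$, and no $2$-simplex of either link is supported on this set. Hence $\lk (u)\cap\lk (v)=\lk (uv)$. Because $v$ is non-singular, Lemma~\ref{lemma:homeomorphic} produces a normal $3$-pseudomanifold $K'$ obtained from $K$ by contracting $uv$, with $|K'|\cong |K|$; since $d(uv)=4$, the computation of operation~$(B)$ in Definition~\ref{definition:operation} yields $g_2(K')=g_2(K)-1<g_2(K)$. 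This contradicts condition~$(ii)$ of the definition of $\mathcal{R}$, and therefore $z\neq z_1$.
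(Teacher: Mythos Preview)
Your proof is correct and rests on exactly the same key observation as the paper's: if $z=z_1$, then any vertex $c$ common to $\lk(u)$ and $\lk(v)$ but outside $\lk(uv)$ forces $\partial(uvc)$ to be a missing triangle in $\lk(z)$, contradicting Lemma~\ref{lemma:missing-triangle}. The organization is different, however. The paper invokes Lemma~\ref{lemma:nonempty} \emph{up front} to obtain such a vertex $w$, and then reaches the contradiction in one line. You instead run the argument as a contrapositive: you show that no common vertex $c$ outside $\lk(uv)$ can exist, then carefully verify that $\lk(u)\cap\lk(v)=\lk(uv)$ at the level of edges and $2$-simplices, and finally re-derive the contradiction by performing the edge contraction of $uv$ by hand --- which is precisely the content of Lemma~\ref{lemma:nonempty}. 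Your route is longer but more explicit (in particular, you spell out that $tz$ and $ab$ are non-edges in both bipyramids, so no higher-dimensional coincidence can occur on $\{t,a,z,b\}$); the paper's route is cleaner because it packages that final contradiction into the already-proved Lemma~\ref{lemma:nonempty}.
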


\begin{proof}
If $|\lk (z_1)|\not\cong \mathbb{S}^2$, then clearly, $z \neq z_1$.
For $|\lk (z_1)|\cong \mathbb{S}^2$, let $z=z_1$, i.e., $\lk (u)=B_{u_1,\dots,u_m}(t;z)$ and  $\lk (v)=B_{v_1,\dots,v_k}(t;z)$, where $v \in \lk (u,\lk (t))$. Since $uv$ is an edge in $K$, it follows from Lemma \ref{lemma:open edge} that $\lk (u) \cap \lk (v) \setminus \lk (uv)$ contains some vertices. Let $w \in \lk (u) \cap \lk (v) \setminus \lk (uv)$. Then $uvz,uwz,$ and $vwz$ are in $K$. This implies that  $\partial(uvw)  \subset \lk (z)$, but $uvw \not \in K$. This contradicts Lemma \ref{lemma:missing-triangle} and hence $z\neq z_1$.
\end{proof}

\begin{lemma}\label{lemma:O_v(2)}
Let $K\in \mathcal {R}$, and $t$ be the singular vertex in $K$ as in Definition $\ref{edge-weight}$. Then $\sum_{v\in \lk (t)} \mathcal{O}_v \geq f_0(\lk (t)) - 1$. Moreover,  if $\lk (t,K)$ does not contain the other singular vertex, then  $\sum_{v\in \lk (t)} \mathcal{O}_v \geq f_0(\lk (t))$. 
\end{lemma}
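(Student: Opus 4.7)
My plan is to partition $V := \lk(t,K)$ into \emph{deficient} vertices $A := \{u \in V : \mathcal{O}_u < 1\}$ and \emph{surplus} vertices $B := V \setminus A$, and to show that the chord-count excess at $B$-vertices which are chord-targets of $A$-vertices precisely covers the deficit at $A$. For $u \in V$, write $c_u := |\{v\in V : uv \in K,\ uv \not\in \lk(t)\}|$ for the chord-degree. Lemmas \ref{lemma:nonempty} and \ref{lemma:interior 2} (the latter applied to the edge $ut \not\in \lk(t)$ when $u = t_1$) force $c_u \geq 1$ for every $u \in V$; since both endpoints of any chord inside $V$ sit in $\Star(t)$ and consequently carry weights $\lambda(u,v) = \lambda(v,u) = 1/2$ by Definition \ref{edge-weight}, this already yields $\mathcal{O}_u \geq c_u/2 \geq 1/2$. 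Lemma \ref{lemma:<1(2)} then refines the picture: every non-singular $u \in A$ has $c_u = 1$ with unique chord-neighbour $z_u$, and if $z_u$ is also non-singular then $\lk(u) = B_{u_1,\ldots,u_m}(t; z_u)$ with $m \geq 4$ and $\mathcal{O}_u = 1/2$. I collect these $u$'s as $A_{\mathrm{mat}}$, and set $A_{\mathrm{sing}} := \{u \in A : u\text{ non-singular},\ z_u = t_1\}$ and $A_{t_1} := A \cap \{t_1\}$.

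The first key step I will use is to rule out mutual pairs in $A_{\mathrm{mat}}$. If $u, u' \in A_{\mathrm{mat}}$ with $z_u = u'$, uniqueness of chord-neighbours forces $z_{u'} = u$. Computing $\lk(uu',K) = \lk_{\lk(u)}(u')$ from the bipyramid $\lk(u) = B(t;u')$ gives $\lk(uu',K) = \lk(t,u)$, and by symmetry $\lk(uu',K) = \lk(t,u')$, so $\lk(t,u) = \lk(t,u')$ as identical cycles in $\lk(t)$. Then $\Star(u, \lk(t)) \cup \Star(u', \lk(t))$ is a $2$-sphere subcomplex of the connected normal $2$-pseudomanifold $\lk(t)$: every edge of this subcomplex already lies in two of its own triangles, so the sphere is closed under cofaces and must equal all of $\lk(t)$, contradicting $\lk(t) \not\cong \mathbb{S}^2$. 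Therefore $z_u \notin A_{\mathrm{mat}}$ for every $u \in A_{\mathrm{mat}}$.

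The second key step will bound $c_v$ from below when $v \in V$ is the common chord-target of $u^{(1)},\ldots,u^{(a_v)} \in A_{\mathrm{mat}}$. By Lemma \ref{lemma:outside-edge(2)}, every non-singular $y \in \lk(t, u^{(j)})$ satisfies $yv \not\in \Star(t)$, so such $y$ is a chord-neighbour of $v$; since $V$ contains at most one singular vertex (the possible $t_1$), this loses at most one vertex per target and so $\bigcup_j \lk(t,u^{(j)}) \setminus \{t_1\}$ supplies at least $m_{u^{(1)}} - 1 \geq 3$ chord-neighbours of $v$, improving to $\geq 4$ when $V$ has no other singular vertex. Lemma \ref{lemma:0.5(2)} together with $\lk(u^{(j)}) = B(t;v)$ shows the $u^{(j)}$'s are pairwise $\lk(t)$-non-adjacent and none sits in any $\lk(t,u^{(i)})$, so they contribute $a_v$ further distinct chord-neighbours. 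Hence $c_v \geq a_v + 3$ in general and $c_v \geq a_v + 4$ in the absence of other singularities, and accordingly $\mathcal{O}_v \geq c_v/2$.

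For the final summation, in the no-other-singular case $A = A_{\mathrm{mat}}$; setting $B_1 := \{v \in B : a_v \geq 1\}$ and using $\sum_{v \in B_1} a_v = |A|$,
\[
\sum_{v \in V} \mathcal{O}_v \geq \tfrac{|A|}{2} + \sum_{v \in B_1} \tfrac{a_v + 4}{2} + |B \setminus B_1| = \tfrac{|A|}{2} + \tfrac{|A|}{2} + 2|B_1| + |B| - |B_1| = n + |B_1| \geq n.
\]
In the general case, each $A_{\mathrm{sing}}$- or $A_{t_1}$-vertex contributes only $\geq 1/2$. Since $c_{t_1} \geq |A_{\mathrm{sing}}|$, the inequality $\mathcal{O}_{t_1} \geq |A_{\mathrm{sing}}|/2$ forces $|A_{\mathrm{sing}}| \leq 1$ whenever $t_1 \in A_{t_1}$, while for $t_1 \in B$ it yields an excess $\mathcal{O}_{t_1} - 1 \geq |A_{\mathrm{sing}}|/2 - 1$ that absorbs the $A_{\mathrm{sing}}$-deficit up to at most $1$ unit. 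A short case analysis on whether $t_1 \in A$ or $t_1 \in B$ then yields $\sum_v \mathcal{O}_v \geq n - 1$. The main obstacle will be this careful half-integer bookkeeping in the singular case, where the $\tfrac12$-deficits at $A_{\mathrm{sing}}$- and $A_{t_1}$-vertices must be matched against the precise surplus at $t_1$ to land the bound $n - 1$ rather than a weaker $n - 2$.
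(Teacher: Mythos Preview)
Your approach coincides with the paper's: both classify the vertices $u$ with $\mathcal{O}_u<1$ via Lemma~\ref{lemma:<1(2)}, group them by their unique chord-target, and invoke Lemma~\ref{lemma:outside-edge(2)} to show that each non-singular target $z$ acquires enough extra chord-edges to offset its group's deficit (your inequality $c_v\ge a_v+3$ is exactly the paper's bound $\mathcal{O}_{z_i}\ge 1.5+\sum_{v\in S_i}\lambda(z_i,v)$), with the possible singular vertex $t_1\in\lk (t)$ treated separately. Your bookkeeping via $c_v$ and $a_v$ is somewhat cleaner than the paper's set-by-set computation, and your direct sphere argument for ``no mutual pairs in $A_{\mathrm{mat}}$'' replaces what the paper handles implicitly through the bound $\mathcal{O}_{z_i}\ge 1.5$.

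One small slip: your appeal to Lemma~\ref{lemma:interior 2} for $c_{t_1}\ge 1$ is not warranted, since that lemma's proof goes through Lemma~\ref{lemma:nonempty}, which needs one endpoint of the edge to be non-singular; indeed the proof of Lemma~\ref{lemma:degree-singular 2} explicitly contemplates $\lk (t)\cap\lk (t_1)-\lk (tt_1)=\emptyset$. This does no damage to your argument, however, because the only lower bounds on $\mathcal{O}_{t_1}$ your case analysis actually needs are $\mathcal{O}_{t_1}\ge |A_{\mathrm{sing}}|/2$ (which follows directly from the chords $t_1u$ for $u\in A_{\mathrm{sing}}$) and the trivial $\mathcal{O}_{t_1}\ge 0$; with these your sketched case split on $t_1\in A$ versus $t_1\in B$ does yield $\sum_v\mathcal{O}_v\ge n-1$.
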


\begin{proof}
If for all vertices $v \in \lk (t)$, $\mathcal{O}_v \geq 1$ holds, then trivially  $\sum_{v\in V(\lk (t))} \mathcal{O}_v\geq f_0(\lk (t)) $. We consider the case when some vertices have an outer weight of less than $1$. Let $p_1 \in \lk (t)$ be a non-singular vertex such that $\mathcal{O}_{p_1}<1$. Then by Lemma \ref{lemma:<1(2)}, $\lk (p_1)\cap \lk (t)\setminus\lk (p_1t)$ contains exactly one vertex, say $z_1$. If $z_1$ is non-singular, then  $\mathcal{O}_{p_1}=0.5$  and  $\lk (p_1)=B_{p^1_1,\dots,p^{m_1}_1}(t;z_1)$.

 Let $S_1$ be the set of all non-singular vertices $v\in \lk (t)$ such that $\mathcal{O}_v=0.5$ and  $\lk (v)=B_{v_1,\dots,{v}_{m}}(t;z_1)$, where $z_1$ is the non-singular vertex as above. Then it follows from  Lemma \ref{lemma:0.5(2)} that $p^1_1,\dots,p^{m_1}_1 \not \in S_1$. Let $S_1'= \{p^1_1,p^2_1,p^3_1,p^4_1\}$. 
Then by Lemma \ref{lemma:outside-edge(2)}, $z_1{p^i_1}\not \in \lk (t)$ for at least three $p^i_1$'s of $S_1'$. Therefore,
 
 $$
 \begin{array}{lcl}
 	\sum_{v \in S_1\cup\{z_1\}} \mathcal{O}_v & = & \mathcal{O}_{z_1}  +  \sum_{v \in S_1 }\mathcal{O}_v \medskip\\

 	& \geq &1.5 +  \sum_{v \in S_1} \lambda(z_1,v) + \sum_{v \in S_1} \mathcal{O}_v \medskip\\
 	
 	
 	
 		& = & 1.5+ card(S_1) ~~ (\mbox{since } \lambda(z_1,v) + \mathcal{O}_v= 1).
 	
 \end{array}
 $$
 	Suppose that there is another non-singular vertex $p_2\in V(\lk (t))\setminus S_1$ such that $\mathcal{O}_{p_2}=0.5$ and   $\lk (p_2)=B_{p^1_2,\dots,p_2^{m_2}}(t;z_2)$,  where $z_2\neq z_1$ is also a non-singular vertex. Let $S_2$ be the set of all non-singular vertices $v\in \lk (t)$ such that $\mathcal{O}_v=0.5$ and  $\lk (v)=B_{v_1,\dots,v_m}(t;z_2)$, where $z_2$ is the non-singular vertex as above. Then by Lemma \ref{lemma:0.5(2)}, $p^1_2,\dots,p^{m_2}_2 \not \in S_2$. Let $S_2'= \{p^1_2,p^2_2,p^3_2,p^4_2\}$. 
 	From Lemma \ref{lemma:outside-edge(2)}, ${z_2}p^i_2\not \in \lk (t)$ for at least three $p^i_2$'s in $S_2'$. By similar arguments as above, we have
 	$$\sum_{v \in S_2\cup\{z_2\}} \mathcal{O}_v \geq {1.5+ card(S_2).}$$
Further, by the assumptions on $S_1$ and $S_2$, we have $(S_1\cup\{z_1\})\cap (S_2\cup \{z_2\})=\emptyset$. Therefore, after a finite number of steps, say $n$,  we get a set $\tilde{S}=(S_1\cup\{z_1\})\cup\dots\cup(S_n\cup\{z_n\})$, where $z_1,\dots,z_n$ are non-singular vertices and  $\sum_{v \in \tilde{S}} \mathcal{O}_v \geq  card(\tilde{S})+n/2$.

Let $tt_1$ be an edge in $K$, where $t_1$ is the other singular vertex in $K$. Then $\tilde{S}\subset V(\lk (t)\setminus {t_1})$. Suppose that there is a non-singular vertex $p_3\in V(\lk (t))$ such that $0.5\leq \mathcal{O}_{p_3} <1$ and  $\lk (p_3)\cap \lk (t)\setminus\lk ({p_3}t)$ contains the vertex $t_1$. Let  $P$ be the set of all non-singular vertices $v \in \lk (t)$ such that  $0.5\leq \mathcal{O}_v <1$ and  $\lk (v)\cap \lk (t)\setminus\lk (vt)$ contains only $t_1$. Then,
 
  $$
 \begin{array}{lcl}
 	\sum_{v \in P\cup\{t_1\}} \mathcal{O}_v & = & \mathcal{O}_{t_1}  +   \sum_{v \in P }\mathcal{O}_v \medskip\\

 	& \geq & \sum_{v \in P} \lambda(t_1,v) + \sum_{v \in P} \mathcal{O}_v \medskip\\
 	
 	
 	
 	& \geq &  card(P)  ~~ (\mbox{since } \lambda(t_1,v) + \mathcal{O}_v\geq 1).\medskip\\ 
 	
\end{array}
$$

From our constructions of  $\tilde{S}$ and $P$, it is clear that $\tilde S\cap (P \cup \{t_1\})= \emptyset$. Further, $v \not\in \tilde{S}\cup(P\cup \{t_1\})$ implies $\mathcal{O}_v\geq 1$. Thus,
$$
 \begin{array}{lcl}
  
   \sum_{v \in \lk (t)} \mathcal{O}_v &=& \sum_{v \in \tilde{S}} \mathcal{O}_v +  \sum_{v \in P\cup \{t_1\}} \mathcal{O}_v + \sum_{v \in V(\lk (t)) \setminus (\tilde S\cup (P\cup \{t_1\}))} \mathcal{O}_v\\
 	&\geq & card(\tilde{S}) + n/2 + card(P)  + f_0(\lk (t)) - card(\tilde S\cup (P\cup \{t_1\})) \\
	&=&  card(\tilde{S}) + n/2 +card(P)  + f_0(\lk (t)) -  card(\tilde{S}) - card(P) -1\\
		&=& f_0(\lk (t)) + n/2 -1 \\
    &\geq& f_0(\lk (t)) -1.
	
\end{array}
$$

If $tt_1$ is not an edge in $K$ then $P$ becomes empty, and $v \not\in \tilde{S}$ implies $\mathcal{O}_v\geq 1$. Thus,
$$
 \begin{array}{lcl}
  
   \sum_{v \in \lk (t)} \mathcal{O}_v &=& \sum_{v \in \tilde{S}} \mathcal{O}_v + \sum_{v \in V(\lk (t)) \setminus \tilde S} \mathcal{O}_v\\
 	&\geq & card(\tilde{S}) + n/2 + f_0(\lk (t)) - card(\tilde S) \\
		&\geq& f_0(\lk (t)).
	
\end{array}
$$
This proves the result.
 \end{proof}

\begin{lemma} \label{lemma:bound:0.5(a)}
Let $K\in \mathcal {R}$, and $t$ be the singular vertex in $K$ as in Definition $\ref{edge-weight}$. Let $u\in \lk (t,K)$ be a non-singular vertex such that $\mathcal{O}_u=0.5$ and  $\lk (u,\lk (t))=C_m(u_1,\dots,u_m)$, for some $m\geq 4$. Then $f_0(\lk (t))\geq 2m + 1$. 
\end{lemma}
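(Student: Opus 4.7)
The plan is to show that the given data force $\lk(u,K)=B_{u_1,\dots,u_m}(t;z)$ for a single extra vertex $z\in\lk(t)$, and then to prove that most of $z$'s neighbours in $\lk(t)$ must be new. By Lemma \ref{lemma:<1(2)}, there is exactly one vertex $z\in\lk(u)\cap\lk(t)-\lk(ut)$. Since $u$ and $z$ both lie in $\lk(t)\subseteq\Star(t)$, the ``otherwise'' clause of Definition \ref{edge-weight} gives $\lambda(u,z)=1/2$, so $\mathcal{O}_u\geq\lambda(u,z)=1/2$; the hypothesis $\mathcal{O}_u=0.5$ then rules out any further edge at $u$ contributing to $\mathcal{O}_u$, forcing $V(\lk(u,K))\subseteq\{t,u_1,\dots,u_m,z\}$. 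As $u$ is non-singular, $|\lk(u,K)|\cong\mathbb{S}^2$; since $t$ has link $C_m(u_1,\dots,u_m)$ inside $\lk(u,K)$, the complementary $2$-disk is a fan from its single interior vertex $z$, so $\lk(u,K)=B_{u_1,\dots,u_m}(t;z)$. In particular $d(uz,K)=m$, and the two apices of this bipyramid are not joined by an edge of $\lk(u,K)$, so $tuz\notin K$.

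I would then apply Lemma \ref{lemma:atleast} with $w=z$ in the single-vertex case $\lk(t)\cap\lk(u)-\lk(ut)=\{z\}$ to obtain $d(tz,K)\geq d(tu,K)=m$, so $z$ has at least $m$ neighbours in $\lk(t)$. Because $tuz$ is not a $2$-simplex, $u\notin\lk(z,\lk(t))$. For any non-singular $u_i\in\lk(tu)$, if $tzu_i$ were a $2$-simplex of $K$ then the three triangles $tuu_i,\ tzu_i,\ zuu_i$ (the last coming from the bipyramid) would place $\partial(tzu)\subset\lk(u_i)$, and Lemma \ref{lemma:missing-triangle} would force $tzu\in K$, contradicting $z\notin\lk(ut)=\{u_1,\dots,u_m\}$. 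So no non-singular $u_i$ belongs to $\lk(z,\lk(t))$. Since $K$ has at most two singular vertices and $t$ is one of them, at most one of $u_1,\dots,u_m$ is singular, hence $\lk(z,\lk(t))$ contains at most one vertex of $\{u_1,\dots,u_m\}$.

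Assembling the count, the at-least-$m$ neighbours of $z$ in $\lk(t)$ contribute at least $m-1$ vertices outside $\{u,u_1,\dots,u_m\}$, which together with the $m+2$ already-known vertices $\{u,u_1,\dots,u_m,z\}$ give $f_0(\lk(t))\geq(m+2)+(m-1)=2m+1$. The principal delicate point I anticipate is the case $z=t_1$: Lemma \ref{lemma:<1(2)} only delivers the bipyramid conclusion when $z$ is non-singular, so the bipyramid must be re-derived from $\mathcal{O}_u=0.5$ and $\lambda(u,z)=1/2$ directly, as in the first paragraph. Fortunately the weight computation is indifferent to the singularity of $z$, and when $z=t_1$ all of $u_1,\dots,u_m$ are automatically non-singular (the two singular vertices of $K$ are $t$ and $z=t_1$), so no $u_i$ lies in $\lk(z,\lk(t))$ and the bound even improves to $f_0(\lk(t))\geq 2m+2$.
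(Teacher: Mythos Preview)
Your proof is correct and follows essentially the same route as the paper's: establish that $\lk(u)=B_{u_1,\dots,u_m}(t;z)$, invoke Lemma~\ref{lemma:atleast} to get $d(tz)\geq m$, use Lemma~\ref{lemma:missing-triangle} at each non-singular $u_i$ to show $u_i\notin\lk(tz)$, and count. One small logical reordering would make it cleaner---the verification that $\lk(t)\cap\lk(u)-\lk(ut)$ equals $\{z\}$ or $(t_1,z]$ (needed as the hypothesis of Lemma~\ref{lemma:atleast}) is exactly your argument that no non-singular $u_i$ lies in $\lk(z,\lk(t))$, so that paragraph should precede the appeal to Lemma~\ref{lemma:atleast}; the paper's own proof has this same forward-reference structure, and your explicit treatment of the case $z=t_1$ is a welcome addition.
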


\begin{proof}
Since $\mathcal{O}_u=0.5$, $\lk (u) \setminus \Star (t,\lk (u))$ contains exactly one vertex, say $z$. Then $\lk (u)\cap \lk (t)\setminus\lk (tu)=\{z\}$ or $(w,z]$ for some singular vertex $w\in \lk (ut)$. It follows from Lemma \ref{lemma:atleast} that $d(tz),d(uz)\geq m$. Then $u_iz\in\lk (u)$, i.e., $uu_i{z}\in K$ for  $1\leq i \leq m$. If $u_i\in \lk (tz)\cap \lk (tu)$ is a non-singular vertex, then $u_itz, u_iuz\in K$. Since $u_itu\in K$, $\partial(utz)\subset \lk (u_i)$. This implies  $u_ituz\in K$ and hence $z\in \lk (tu)$, which is not possible. Thus, $u_iz \not \in\lk (t)$ for each non-singular vertex $u_i$. Therefore, $f_0(\lk (t,K))\geq f_0(\lk (tu))-1+ f_0(\lk (tz))+card\{u\}+card\{z\}=2m+1$.
\end{proof}

\begin{lemma} \label{lemma:4,5}
Let $K\in \mathcal {R}$, and $t$ be the singular vertex in $K$ as in Definition $\ref{edge-weight}$. Let $u\in \lk (t,K)$ be a non-singular vertex such that $4\leq d (ut)\leq 5$. Then either $\mathcal{O}_u=0.5$ or $\mathcal{O}_u\geq 1$.
\end{lemma}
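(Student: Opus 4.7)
The plan is to suppose, toward a dichotomy, that $\mathcal{O}_u < 1$ and show that this forces $\mathcal{O}_u = 1/2$. By Lemma \ref{lemma:<1(2)}, the set $\lk(u)\cap\lk(t)-\lk(ut)$ consists of a single vertex $z$; if $z$ is non-singular, that same lemma already yields $\mathcal{O}_u = 1/2$ together with $\lk(u)=B_{u_1,\dots,u_m}(t;z)$. So the only situation left to handle is when $z$ is the other singular vertex $t_1$ of $K$.

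In that situation, I would work with the outer disc $D := \lk(u)-\Star(t,\lk(u))$, whose boundary is the cycle $\lk(ut)$ of length $m=d(ut)\in\{4,5\}$. Its interior consists of $t_1$ together with $l := d(u)-2-m$ further vertices $w_1,\ldots,w_l$. Each $w_i$ lies outside $\Star(t)$, since by assumption only $t_1$ lies in $\lk(t)$ among these interior vertices, whereas both $u$ and $t_1$ lie in $\Star(t)$ (indeed $tt_1 \in K$ because $t_1 \in \lk(t)$). The ``otherwise'' clause of Definition \ref{edge-weight} therefore gives $\lambda(u,t_1)=1/2$, so
\[ \mathcal{O}_u = \tfrac{1}{2} + \sum_{i=1}^{l}\lambda(u,w_i). \]

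The remaining ingredient is a short case-check using the uniform bounds $\lambda(u,w_i) \geq 1/2$ for $d(u)\in\{6,7,8\}$ and $\lambda(u,w_i) \geq 1/4$ for $d(u)\geq 9$, both read off directly from Definition \ref{edge-weight} (the minimum $1/4$ occurring when $d(w_i)=7$ with $D_u w_i$ of type $7(5)$, giving $\lambda(w_i,u)=3/4$). Since $D$ has at least one interior vertex, $d(u)\geq m+2$. The pairs $(m,d(u))\in\{(4,6),(5,7)\}$ give $l=0$, hence $\mathcal{O}_u=1/2$; the pairs $(4,7),(4,8),(5,8)$ give $d(u)\leq 8$ and $l\geq 1$, hence $\mathcal{O}_u\geq 1$; the pair $(5,9)$ gives $l=2$ and $\mathcal{O}_u\geq 1/2+2\cdot 1/4 = 1$; and every remaining $(m,d(u))$ has $d(u)\geq 9$ with $l\geq 3$, so $\mathcal{O}_u\geq 1/2+3\cdot 1/4 = 5/4$. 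In every sub-case $\mathcal{O}_u\in\{1/2\}\cup[1,\infty)$, which is what the lemma asserts.

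The main obstacle is the high-degree regime $d(u)\geq 9$, where individual weights can drop as low as $1/4$. The hypothesis $d(ut)\leq 5$ is precisely what keeps $l \geq d(u)-7$ large enough that even these smallest contributions sum to at least $1/2$ (the tight case being $m=5$, $d(u)=9$); for larger $d(ut)$ the analogous count would leave $l=1$ at $d(u)=9$ and the argument would fail, so restricting to $d(ut)\in\{4,5\}$ is essential to this method.
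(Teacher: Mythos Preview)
Your proof is correct. Both arguments begin the same way---assuming $\mathcal{O}_u<1$, invoking Lemma~\ref{lemma:<1(2)} to get a unique $z\in\lk(u)\cap\lk(t)-\lk(ut)$, and disposing of the non-singular case---but diverge when $z=t_1$. The paper argues by contradiction: assuming $0.5<\mathcal{O}_u<1$ forces exactly one further interior vertex $w$, hence $d(u)=m+3\le 8$; but $\lambda(u,w)<0.5$ means $\lambda(w,u)>0.5$, and then Lemma~\ref{lemma:geq 4} (applicable since $t\notin\lk(uw)$) forces $d(u)\ge 9$, a contradiction. You instead compute $\mathcal{O}_u=\tfrac12+\sum_i\lambda(u,w_i)$ directly and run through all pairs $(m,d(u))$, using only the raw bounds $\lambda(u,w_i)\ge\tfrac12$ for $d(u)\le 8$ and $\lambda(u,w_i)\ge\tfrac14$ for $d(u)\ge 9$ read straight from Definition~\ref{edge-weight}. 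Your route is more elementary in that it never appeals to Lemma~\ref{lemma:geq 4}; the paper's route is shorter because that lemma packages the degree-forcing argument. The tight case $(m,d(u))=(5,9)$ in your analysis, where the bound is exactly $1$, is precisely the point where the hypothesis $d(ut)\le 5$ becomes sharp, as you correctly observe.
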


\begin{proof}
It follows from Lemma \ref{lemma:open edge} that $\mathcal{O}_u \geq 0.5$. If possible, let  $0.5<\mathcal{O}_u<1$. Then $\lk (u)\cap \lk (t)\setminus\lk (ut)$ contains exactly one vertex, say $z$, and $\lk (u,K) \setminus \Star (t,\lk (u))$ contains exactly one vertex, say $w\not \in \lk (t,K)$,  other than $z$. Then $d(u)\leq 8$ and $\lambda(u,w)<0.5$. Then $\lambda(w,u)>0.5$, and  Lemma \ref{lemma:geq 4} implies that $d(u)\geq 9$. This is a contradiction.  Thus, the result follows. 
\end{proof}

\begin{lemma} \label{lemma:5,6}
Let $K\in \mathcal {R}$, and $t$ be the singular vertex in $K$ as in Definition $\ref{edge-weight}$ such that $f_0(\lk (t))\leq 10$.  Let $u\in \lk (t,K)$ be a non-singular vertex such that $5\leq d (ut)\leq 6$.  Then $\mathcal{O}_u\geq 1$.
\end{lemma}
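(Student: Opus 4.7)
The plan is to argue by contradiction: suppose $\mathcal{O}_u<1$, and write $m=d(ut)\in\{5,6\}$ with $\lk (ut)=C_m(u_1,\ldots,u_m)$. By Lemma~\ref{lemma:<1(2)}, $\lk (u)\cap \lk (t)-\lk (ut)$ consists of a single vertex $z$. If $z$ is non-singular, Lemma~\ref{lemma:<1(2)} forces $\mathcal{O}_u=1/2$ and $\lk (u)=B_{u_1,\ldots,u_m}(t;z)$, so Lemma~\ref{lemma:bound:0.5(a)} yields $f_0(\lk (t))\geq 2m+1\geq 11$, contradicting $f_0(\lk (t))\leq 10$. This disposes of the non-singular case immediately.

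The remaining case is that $z$ is singular, so $z=t_1$ is the second singular vertex. Applying Lemma~\ref{lemma:atleast} with $w=z=t_1$ (the single vertex in $\lk (u)\cap \lk (t)-\lk (ut)$) gives $d(tt_1),d(ut_1)\geq m$. Since $u$ is non-singular, $\lk (u)$ is a triangulated $2$-sphere, and since $t_1\notin \lk (ut)$ I would deduce $t\notin \lk (ut_1)$ and $u\notin \lk (tt_1)$. The constraint $\mathcal{O}_u<1$ together with $\lambda(u,t_1)=1/2$ permits at most one further vertex $w\notin \lk (t)$ in $\lk (u)-\Star (t,\lk (u))$, so $\lk (ut_1)\subseteq \{u_1,\ldots,u_m,w\}$ and hence $|\lk (ut_1)\cap \{u_1,\ldots,u_m\}|\geq m-1$. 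The key step is then to rule out any $u_i\in \lk (ut_1)\cap \lk (tt_1)$: for such a $u_i$ the triangles $u_iut,\, u_iut_1,\, u_itt_1$ all lie in $K$, so $\partial(utt_1)\subseteq \lk (u_i)$; since $u_i$ is non-singular (the only singular vertices are $t,t_1$, and neither is in $\lk (ut)$), Lemma~\ref{lemma:missing-triangle} forces $utt_1\in \lk (u_i)$, i.e.\ $t_1\in \lk (ut)$, a contradiction.

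Therefore $|\lk (tt_1)\cap \{u_1,\ldots,u_m\}|\leq 1$. Since $|\lk (tt_1)|\geq m$ and $u,t_1\notin \lk (tt_1)$, at least $m-1$ vertices of $\lk (tt_1)$ lie outside $\{u_1,\ldots,u_m\}$. Counting the pairwise distinct vertices $u$, $t_1$, $u_1,\ldots,u_m$, and these $m-1$ vertices from $\lk (tt_1)$ yields $f_0(\lk (t))\geq 2m+1\geq 11$, again contradicting the hypothesis. I expect the main obstacle to be precisely this singular-$z$ case: Lemma~\ref{lemma:bound:0.5(a)} does not apply directly when $z$ is singular, so one must combine the missing-triangle property with the $2$-sphere structure of $\lk (u)$ to force $\lk (ut_1)$ to almost exhaust $\{u_1,\ldots,u_m\}$ and thereby push the vertices of $\lk (tt_1)$ outside the already accounted-for portion of $\lk (t)$.
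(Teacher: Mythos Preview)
Your argument is correct and rests on the same core tools as the paper (Lemma~\ref{lemma:atleast}, the missing-triangle argument of Lemma~\ref{lemma:missing-triangle}, and the vertex count behind Lemma~\ref{lemma:bound:0.5(a)}), but you organize the case analysis differently. The paper splits by the value of $d(ut)$: for $d(ut)=5$ it simply quotes Lemmas~\ref{lemma:4,5} and~\ref{lemma:bound:0.5(a)}, and for $d(ut)=6$ it asserts (using $d(uz)\ge 6$) that $\lk(u)$ already contains at least two vertices beyond $\lk(ut)\cup\{z\}$, whence $\mathcal{O}_u\ge 1$. You instead split by whether the unique vertex $z$ is singular. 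Your treatment of the singular case $z=t_1$ is the genuinely new content: allowing one possible extra vertex $w\notin\lk(t)$, you show $\lk(ut_1)$ contains at least $m-1$ of the $u_i$, then use the missing-triangle argument to force those $u_i$ out of $\lk(tt_1)$, yielding $f_0(\lk(t))\ge 2m+1$ uniformly for $m\in\{5,6\}$. This single count covers both values of $m$ at once and supplies precisely the justification that the paper's $d(ut)=6$ step leaves implicit.
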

\begin{proof}
 If $\lk (u)\cap \lk (t)\setminus\lk (ut)$ contains two or more vertices, then the result follows. Suppose $\lk (u)\cap \lk (t)\setminus\lk (ut)$ contains exactly one vertex, say $z$. Then Lemma \ref{lemma:atleast} implies that $d(uz),d(tz)\geq d(ut)$.
 If $d(ut)=6$, then $\lk (u)$ contains at least two vertices other than $z$ and the vertices of $\lk (ut)$. Thus $\mathcal{O}_u\geq 1$.
   If $d(ut)=5$, then the result follows from Lemmas \ref{lemma:bound:0.5(a)} and \ref{lemma:4,5}.
	\end{proof}
	
	\begin{lemma} \label{lemma:4 cycle}
	Let $K\in \mathcal {R}$, and  $t$ be the singular vertex in $K$ as in Definition $\ref{edge-weight}$ such that $f_0(\lk (t))\leq 10$. Let $u\in \lk (t)$ be a non-singular vertex such that $d(ut)=4$. Then either $\mathcal{O}_u\geq 1$ or $\mathcal{O}_u = 0.5$ and there exists a vertex $z$ in $\lk (t)$ such that $\mathcal{O}_z\geq 2$.
	\end{lemma}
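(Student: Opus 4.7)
The plan is to assume $\mathcal{O}_u < 1$ (otherwise the first conclusion already holds), force $\mathcal{O}_u = 0.5$ together with a rigid local structure for $\lk(u)$, and then produce the required vertex $z\in\lk(t)$ explicitly. Since $d(ut)=4$, Lemma~\ref{lemma:4,5} immediately gives $\mathcal{O}_u=0.5$. The $\mathcal{O}_u=0.5$ sub-argument inside the proof of Lemma~\ref{lemma:<1(2)} then forces $\lk(u)\cap\lk(t)-\lk(ut)=\{z\}$ and, because no vertex of $\lk(u)$ can sit outside $\Star(t,\lk(u))\cup\{z\}$ without pushing $\mathcal{O}_u$ above $0.5$, one gets the bipyramid $\lk(u)=B_{u_1,u_2,u_3,u_4}(t;z)$ with $\lk(ut)=C_4(u_1,u_2,u_3,u_4)$. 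I propose that this $z$ is the sought vertex.

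The next task is to forbid most of $\{u,u_1,u_2,u_3,u_4\}$ from being neighbors of $t$ in the ring $\lk(tz)$. First, $u\notin\lk(tz)$, because $u\in\lk(tz)$ would make $tz$ an edge of $\lk(u)$, but the two apexes of a bipyramid are not joined. Second, no non-singular $u_i$ can lie in $\lk(tz)$: if $|\lk(u_i)|\cong\mathbb{S}^2$ and $u_i\in\lk(tz)$, then $u_iut$, $u_iuz$ and $u_itz$ all lie in $K$, so $\partial(tuz)\subset\lk(u_i)$ and Lemma~\ref{lemma:missing-triangle} would force $tuz\in K$, putting $z\in\lk(ut)$, a contradiction. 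Because $K$ has at most two singularities (one being $t$), at most one of $u_1,\ldots,u_4$ is singular, hence at most one $u_i$ can be in $\lk(tz)$. Lemma~\ref{lemma:atleast} applies (the set $\lk(u)\cap\lk(t)-\lk(ut)$ is either $\{z\}$ or the half-open edge $(t_1,z]$ when the singular $u_i=t_1$ lies in $\lk(tz)$) and yields $d(tz)\geq d(ut)=4$, so at least three of the $u_i$'s are outside $\lk(tz)$.

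Finally, I bound $\mathcal{O}_z$ from below. Since $z$ and each of $u,u_1,\ldots,u_4$ lie in $\Star(t)$, every weight $\lambda(z,\cdot)$ among these neighbors falls into the \emph{otherwise} clause of Definition~\ref{edge-weight} and equals $\tfrac12$. The edge $zu$ sits outside $\Star(t)$ and contributes $\tfrac12$; for each of the at least three indices $i$ with $u_i\notin\lk(tz)$, the edge $zu_i$ also sits outside $\Star(t)$ and contributes $\tfrac12$. Therefore $\mathcal{O}_z\geq \tfrac12 + 3\cdot\tfrac12 = 2$, which is the conclusion sought. The most delicate step is the observation that at most one $u_i$ lies in $\lk(tz)$: it combines the Lemma~\ref{lemma:missing-triangle} obstruction (ruling out every non-singular $u_i$) with the global hypothesis that $K$ has at most two singularities. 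Once that is in hand, the degree bound from Lemma~\ref{lemma:atleast} and the straightforward weight count close the argument.
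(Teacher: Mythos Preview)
Your proof is correct and follows essentially the same route as the paper: once $\mathcal{O}_u<1$ forces $\mathcal{O}_u=0.5$ (via Lemma~\ref{lemma:4,5}) and the bipyramid $\lk(u)=B_{u_1,\dots,u_4}(t;z)$, both you and the paper rule out the non-singular $u_i$'s from $\lk(tz)$ by the $\partial(tuz)\subset\lk(u_i)$ argument and then count the four half-weights $\lambda(z,u),\lambda(z,u_{i_1}),\lambda(z,u_{i_2}),\lambda(z,u_{i_3})$ to get $\mathcal{O}_z\ge 2$. One expository quibble: your ``so at least three of the $u_i$'s are outside $\lk(tz)$'' follows from the preceding singularity count, not from the bound $d(tz)\ge 4$ obtained via Lemma~\ref{lemma:atleast}, which is in fact superfluous to your argument.
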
	
	
	\begin{proof}
If $\lk (u) \setminus \Star (t,\lk (u))$ contains more than two vertices, then the result follows.
		If $\lk (u) \setminus \Star (t,\lk (u))$ has two vertices, then $D_t{u}$ must be of type $7(4)$, and the outer weight is 0.5 for both the vertices of $\lk (u)\setminus \Star (t,\lk (u))$. Therefore, $\mathcal{O}_u=1$. If $\lk (u) \setminus \Star (t,\lk (u))$ has exactly one vertex, say $z$, then $\mathcal{O}_u=0.5$ and from Lemma \ref{lemma:bound:0.5(a)}, $f_0(\lk (t))\geq 9$. It follows from Lemma \ref{lemma:interior 2} that $\lk (uz)\cap \lk (tz)$ does not contain any non-singular vertex. Thus, $d(tz)=4$ and $\lk (z)\cap \lk (t)\setminus\lk (tz)$ contains at least 4 vertices, and therefore $\mathcal{O}_z\geq 2$.	
\end{proof}

\begin{lemma}\label{lemma:O(u)=1}
Let $K\in \mathcal {R}$, and $t$ be the singular vertex in $K$ as in Definition $\ref{edge-weight}$. Let $u\in \lk (t)$ be a non-singular vertex such that $\mathcal{O}_u =1$ and $d(ut)=n$, where $4\leq n\leq 6$. Then there is a vertex $z\in \lk (t)$ such that $\mathcal{O}_z\geq 1.5$ for $n=4,$ and $\mathcal{O}_z\geq 2$ for $n=5,6$.
\end{lemma}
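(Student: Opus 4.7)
The plan is a case analysis on $n = d(ut) \in \{4,5,6\}$ together with an analysis of the closed disc $D := \lk (u) - \Star (t,\lk (u))$. Since $u$ is non-singular we have $|\lk (u)| \cong \mathbb{S}^2$, so $D$ is a 2-disc with boundary the $n$-cycle $\lk (tu)$. The interior vertices of $D$ are exactly the vertices $v\in \lk (u)$ with $v\neq t$ and $v\notin \lk (tu)$, and by definition their weights $\lambda(u,v)$ sum to $\mathcal{O}_u = 1$. Any such $v$ lying in $\lk (t)$ satisfies $u,v\in \Star (t)$ and hence contributes exactly $1/2$. Lemma \ref{lemma:nonempty} guarantees at least one interior vertex of $D$ lies in $\lk (t)$; I pick one such vertex and call it $z$ (preferring a non-singular $z$, and treating the case $z=t_1$ separately when needed).

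With $z$ fixed, Lemma \ref{lemma:atleast} gives $d(tz)\geq n$ and $d(uz)\geq n$. To estimate $\mathcal{O}_z$ I analyse the disc $D_z := \lk (z)-\Star (t,\lk (z))$ (which is again a 2-disc when $z$ is non-singular, and a more general surface-with-boundary region when $z=t_1$). The vertex $u$ itself is always an interior vertex of $D_z$ and contributes $\lambda(z,u)=1/2$; thus I need to produce additional interior weight $1/2$ when $n=4$, and $3/2$ when $n=5,6$. By Lemma \ref{lemma:interior} (or Lemma \ref{lemma:interior 2} if $z$ is adjacent to the other singular vertex), the edge $uz\notin \lk (t)$ forces $\lk (uz)$ to avoid every non-singular vertex of $\lk (tz)$. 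Combining this with $d(tz)\geq n$ and with the already-known structure of $D$ around $z$ pins down $\lk (tz)$ and forces a controlled number of interior vertices of $D_z$, many of which must lie in $\lk (t)$ and therefore contribute $1/2$ each to $\mathcal{O}_z$.

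The main obstacle is the subcase $\lk (u)\cap \lk (t)-\lk (tu)=\{z\}$ where $D$ carries extra interior vertices $w\notin \lk (t)$ whose $\lambda(u,w)$-weights ($1/4$, $1/3$, or $3/4$) total $1/2$. Such a $w$ sits between $u$ and $z$ inside $D$, so $w\in \lk (u)\cap \lk (z)-\lk (uz)$, and Lemma \ref{lemma:missing-triangle} applied inside $\lk (z)$ (together with the Lemma \ref{lemma:atleast} lower bound on $d(tz)$) prevents $w$'s existence from collapsing the disc $D_z$, and instead forces additional vertices of $\lk (t)\cap \lk (z)$ outside $\lk (tz)$. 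A careful count through the possible shapes of $D$ (read off from Figure \ref{fig:1} for $d(u)\leq 7$, and from the $\mathcal{O}_u=1$ constraint more generally) then yields $\mathcal{O}_z\geq 1.5$ for $n=4$ and $\mathcal{O}_z\geq 2$ for $n=5,6$. The gap between the bounds arises because $\lk (tu)=C_4$ is too short to force the boundary of $D_z$ to enclose as many additional $\lk (t)$-vertices as happens when $n=5,6$.
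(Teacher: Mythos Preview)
Your plan has the right shape but misses the key reduction that makes the paper's proof work. The crucial first step in the paper is to show that $D=\lk(u)-\Star(t,\lk(u))$ has \emph{exactly two} interior vertices: with three or more one would need two vertices $p,q\notin\Star(t)$ with $\lambda(u,p)=\lambda(u,q)=\tfrac14$, and then Lemma~\ref{lemma:geq 4}$(ii)$ forces $d(u)\ge 11$, contradicting $d(u)\le n+4\le 10$. Once $D$ has only the two interior vertices $z,w$, every neighbour of $z$ in $\lk(u)$ other than $w$ lies on the boundary cycle $\lk(tu)$. That is what lets the paper conclude, for each non-singular $x_i\in\lk(uz)\cap\lk(tu)$, that $zx_i\notin\lk(t)$ (otherwise $\partial(tuz)\subset\lk(x_i)$ and Lemma~\ref{lemma:missing-triangle} gives $z\in\lk(tu)$), and hence $\mathcal{O}_z\ge \tfrac12+(n-2)\cdot\tfrac12=\tfrac{n-1}{2}$.

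Without that reduction your appeal to Lemma~\ref{lemma:interior}/\ref{lemma:interior 2} does not give what you claim. Applying Lemma~\ref{lemma:interior 2} to the edge $uz$ with the distinguished point $p=t\in\lk(u)\cap\lk(z)-\lk(uz)$ only rules out non-singular $q\in\lk(uz)$ with $tq\in\lk(u)\cap\lk(z)$, i.e.\ $q\in\lk(tu)\cap\lk(tz)$; it does \emph{not} show that $\lk(uz)$ avoids all of $\lk(tz)$. You need $q\in\lk(tu)$, and that is exactly what the two-interior-vertex reduction supplies. Relatedly, your blanket use of Lemma~\ref{lemma:atleast} to get $d(tz),d(uz)\ge n$ is illegitimate when $\lk(u)\cap\lk(t)-\lk(tu)$ contains two vertices (the paper handles that case separately without invoking Lemma~\ref{lemma:atleast}), and the sentence ``such a $w$ sits between $u$ and $z$ inside $D$, so $w\in\lk(u)\cap\lk(z)-\lk(uz)$'' is not right: $u\notin D$, and nothing prevents $w$ from being a neighbour of $z$ in $D$ (indeed in the paper's Case~1 it typically is). Reinstate the two-vertex reduction first; the rest of your outline then collapses to the paper's short edge count.
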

	
\begin{proof}
It follows from Lemma \ref{lemma:open edge} that  $\lk (u)\cap \lk (t)\setminus\lk (ut)$ contains some vertices. If $\lk (u) \setminus \Star (t,\lk (u))$ contains more than three vertices, then $\mathcal{O}_u>1$, which is a contradiction. If $\lk (u)\setminus \Star (t,\lk (u))$ contains exactly three vertices, then $\mathcal{O}_u=1$ implies that $(i)$ $\lk (u)\cap \lk (t)\setminus lk (ut)$ contains exactly one vertex, say $z$, and $(ii)$ other two vertices, say $p,q\in \lk (u)$ but  $p,q\not \in \lk (t)$, and $\lambda(u,p)=\lambda(u,q)=0.25$. Further, $d(u)\leq 10$. However, $\lambda(u,p)=0.25$, and $\lk (up)$ contains at most one singular vertex implying $d(u)\geq 11$ (cf. Lemma \ref{lemma:geq 4} $(ii)$), which is a contradiction.

Therefore,  $\lk (u) \setminus \Star (t,\lk (u))$ contains exactly two vertices, say $z$ and $w$. It follows from Lemma \ref{lemma:open edge} that $\lk (u)\cap \lk (t)\setminus\lk (ut)$ contains either both $z$ and $w$ or exactly one vertex, say $z$.

\noindent \textbf{Case 1:} Let $\lk (u)\cap \lk (t)\setminus\lk (ut)$ contain only $z$. It follows from Lemma \ref{lemma:atleast} that $d(uz)\geq d(tu)=n$. Therefore, $z$ is connected  with $n-1$ vertices, say $x_1,x_2,\dots,x_{n-1}$, of $\lk (ut)$ in $D_{t}u$. Since $\lk (ut)$ can have at most one singular vertex, at least $n-2$ vertices from $x_1,x_2,\dots,x_{n-1}$ are non-singular. Let $x_1,x_2,\dots,x_{n-2}$ be non-singular vertices. Then the edges $zx_1,zx_2,\dots,zx_{n-2},$ and $zu$ are not in $\lk (t)$. Therefore,   $\mathcal{O} _z\geq \frac{n-1}{2}$.

\smallskip

\noindent \textbf{Case 2:} Let $\lk (u)\cap \lk (t)\setminus\lk (ut)$ contain both $z$ and $w$. For $n=4$, both $z$ and $w$ are connected  with precisely three vertices of $\lk (ut)$ in $D_{t}u$. By the same arguments as in Case 1, we get  $\mathcal{O} _z\geq 1.5$. For $n=5,6$, one vertex, say $z$, is connected  with at least four vertices of $\lk (ut)$ in $D_{t}u$. Therefore, by similar arguments as in Case 1, we get  $\mathcal{O} _z\geq 2$.
\end{proof}

\begin{lemma} \label{lemma:n-2(2)}
Let $K\in \mathcal {R}$, and $t$ be the singular vertex in $K$ as in Definition $\ref{edge-weight}$. Let $u \in \lk (t)$ be a non-singular vertex such that $\lk (ut)$ is a $(n-2)$-cycle and $f_0(\lk (t))=n$. Then $\mathcal{O}_u\geq \ceil[\big]{\frac{n-3}{2}} \times 0.5 + \floor[\big]{\frac{n-3}{2}} \times 0.25 +0.5$.
\end{lemma}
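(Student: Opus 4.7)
My plan is to lower-bound $\mathcal{O}_u$ by analyzing the structure of the disk $D := \lk(u) - \Star(t, \lk(u))$ around a distinguished ``antipodal'' vertex $z$. Since $f_0(\lk(t)) = n$ and $|\lk(ut)| = n-2$, there is exactly one vertex $z \in V(\lk(t)) \setminus (V(\lk(ut)) \cup \{u\})$. Lemma \ref{lemma:nonempty} places $z$ in $\lk(u) \cap \lk(t) - \lk(ut)$, and Lemma \ref{lemma:atleast} (applied with $w = z$) yields $d(uz), d(tz) \geq n-2$; the latter is tight since only $n-2$ candidate neighbors are available, so $z$ is adjacent in $\lk(t)$ to every $u_i \in \lk(ut)$. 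Writing $\lk(ut) = C_{n-2}(u_1, \ldots, u_{n-2})$, the closed $2$-disk $D$ has boundary $\lk(ut)$ and interior vertices $\{z, w_1, \ldots, w_k\}$ with every $w_j \notin \lk(t)$. Both $u$ and $z$ lie in $\Star(t)$, so $\lambda(u,z) = 1/2$ by the ``otherwise'' clause of Definition~\ref{edge-weight}, and thus $\mathcal{O}_u = 1/2 + \sum_{j=1}^{k} \lambda(u, w_j)$.

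Next I establish the counts and weights. If $k = 0$, then $D = \{z\} \ast \partial D$ and $\lk(u) = B_{u_1,\ldots,u_{n-2}}(t;z)$, so $\mathcal{O}_u = 1/2$; Lemma~\ref{lemma:bound:0.5(a)} then forces $f_0(\lk(t)) \geq 2(n-2)+1$, contradicting $f_0(\lk(t)) = n$ whenever $n \geq 6$. Hence $k \geq 1$. More generally, the cycle $\lk_D(z)$ has length $d(uz) \geq n-2$, and Lemma~\ref{lemma:missing-triangle} forbids diagonals between non-adjacent boundary vertices of $D$. A planar-combinatorial count in the annulus $D - \Star_D(z)$ that propagates forced adjacencies across each ``gap'' between consecutive boundary neighbors of $z$ should show $k \geq n-3$ and identify an ordering of $n-3$ of the $w_j$'s along $\lk_D(z)$ in which alternating vertices play different roles.

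Finally I combine the weights. Each $\lambda(u, w_j) \geq 1/4$ by Definition~\ref{edge-weight}, with equality only when $d(u) = n+k \geq 9$, $d(w_j) = 7$, and $d(uw_j) = 5$ (the type $7(5)$ configuration). In this minimal case, Lemma~\ref{lemma:adjacent}(ii) applied to any $w_l$ adjacent to $w_j$ along $\lk_D(z)$ forces $d(w_l) \geq 10$, hence $\lambda(u, w_l) = 1/2$. Thus two consecutive $w_j$'s on $\lk_D(z)$ cannot both be minimal-weight, and alternation along the ordering from the previous step yields at least $\ceil[\big]{\frac{n-3}{2}}$ contributions of $1/2$ and $\floor[\big]{\frac{n-3}{2}}$ contributions of $1/4$. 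Summing delivers the claimed inequality. The hard part will be the joint rigor of the planar count (establishing $k \geq n-3$) and the ``no two consecutive minimal weights'' principle; both require careful case analysis across the ranges $d(u) \in \{7,8\}$ versus $d(u) \geq 9$, and $d(w_j) \in \{6,7,\geq 8\}$, and repeatedly invoke Lemmas~\ref{lemma:adjacent}, \ref{lemma:geq 4}, and the no-diagonal property of Lemma~\ref{lemma:missing-triangle}.
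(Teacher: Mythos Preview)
Your overall strategy matches the paper's: locate the unique vertex $z \in \lk(t) \setminus (\lk(ut) \cup \{u\})$, use Lemma~\ref{lemma:atleast} to get $d(uz) \ge n-2$, and then extract $\ge n-3$ ``new'' interior vertices from the cycle $\lk(uz)$ on which an alternation argument bounds the weights. However, two steps in your plan need correction.

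\medskip
\textbf{The count $k \ge n-3$.} You propose a ``planar-combinatorial count in the annulus $D - \Star_D(z)$,'' but this is unnecessarily elaborate. The paper's argument is a one-line application of Lemma~\ref{lemma:missing-triangle}: if a non-singular $w \in \lk(uz)$ were also in $\lk(tu)$, then (since $d(tz)=n-2$ forces $zw \in \lk(t)$) we would have $tuw, uzw, tzw \in K$, hence $\partial(tuz) \subset \lk(w)$, hence $tuz \in \lk(w)$, contradicting $z \notin \lk(tu)$. Thus at most one vertex of $\lk(uz)$ (the possible singular $t_1$) lies on $\partial D$, and the remaining $\ge m-1 \ge n-3$ vertices are interior, i.e.\ among the $w_j$'s. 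No annulus bookkeeping is needed.

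\medskip
\textbf{The alternation lemma is mis-cited.} When $\lambda(u,w_j)=1/4$ you have $d(w_j)=7$ and $d(uw_j)=5$, so $\lk(w_j)$ is the bipyramid $B_{\,\lk(uw_j)}(u;q)$. A neighbour $w_l$ of $w_j$ along $\lk_D(z)=\lk(uz)$ satisfies $uw_lw_j \in K$, hence $w_l \in \lk(uw_j)$: it is a \emph{base} vertex of the bipyramid, so $d(w_jw_l)=4$ and $D_{w_l}w_j$ is of type $7(4)$, \emph{not} $7(5)$. Thus Lemma~\ref{lemma:adjacent}(ii) does not apply. The correct reference is Lemma~\ref{lemma:geq 4}(iii) (noting $t \notin \lk(w_jw_l)$ since $w_j \notin \Star(t)$, so $\lk(w_jw_l)$ has at most one singularity), which yields $d(w_l) \ge 8$ rather than $\ge 10$. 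This weaker bound still gives $\lambda(u,w_l)=1/2$, so your conclusion survives, but the reasoning you wrote is incorrect as stated.
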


\begin{proof}
It follows from Lemma \ref{lemma:open edge} that  $\lk (u)\cap \lk (t)\setminus\lk (ut)$ contains exactly one vertex, say $z$. Then  Lemma \ref{lemma:atleast} implies that $d(tz), d(uz) \geq d(tu)=n-2$. Therefore, $d(tz)=n-2$. Let $\lk (uz)=C_{m}(z_1,z_2,\dots z_{m})$, for some $m\geq n-2$.  It follows from Lemma \ref{lemma:interior 2} that $\lk (uz)\cap \lk (tu)$ does not contain any non-singular vertices.
Since  $K$ has at most two singularities, $\lk (uz)$ contains at most one singular vertex, say $z_m$ (if it exists), and hence $\mathcal{O}_u\geq \lambda(u,z)+ \sum_{i=1}^ {m-1}\lambda(u,z_i)$. If $\lambda(u,z_i)=0.25$, then $f_0(\lk (z_i))=7$, and hence $f_0(\lk (z_{i-1}))$ and $f_0(\lk (z_{i+1}))$ must be bigger than 8. Therefore $\lambda(u,z_{i-1})=\lambda(u,z_{i+1})=0.5$ (here the summation in subscripts is modulo $m$). Therefore $\mathcal{O}_u\geq \ceil[\big]{\frac{n-3}{2}} \times 0.5 + \floor[\big]{\frac{n-3}{2}} \times 0.25 +0.5$. 
\end{proof}

\begin{lemma} \label{lemma:n-3(2)}
Let $K\in \mathcal {R}$, and $t$ be the singular vertex in $K$ as in Definition $\ref{edge-weight}$. Let $u \in \lk (t)$ be a non-singular vertex such that $\lk (ut)$ is a $(n-3)$-cycle and  $f_0(\lk (t))=n$, $8\leq n \leq 10$. Then  $\mathcal{O}_u \geq 1.33$ for n=8,9 and 
	$\mathcal{O}_u \geq 1.25$ for n=10.
\end{lemma}

\begin{proof} It follows from Lemma \ref{lemma:open edge} that  $\lk (u)\cap \lk (t)\setminus\lk (ut)$ contains at most two vertices.
	
\noindent \textbf{Case 1:} Let $\lk (u)\cap \lk (t)\setminus\lk (tu)=\{z\}$ or $(y,z]$ for some singular vertex $y\in \lk (tu)$. Then by Lemma \ref{lemma:atleast}, $d(tz), d(uz) \geq d(tu)=n-3$. Since $f_0(\lk (ut))=f_0(\lk (t))-3$, $\lk (t) \setminus \Star (u,\lk (t))$ contains exactly two vertices, and one of them is $z$. Therefore, at least $n-4$ vertices of $\lk (ut)$ are joined with $z$ in $\lk (t)$ and at least $n-5$ of them are non-singular. If one of those $n-5$ non-singular vertices is joined with $z$ in $\lk (u)$, then this contradicts the hypothesis of the Lemma \ref{lemma:interior 2}. Therefore, $\lk (u)$ contains at least $n-5$  vertices other than $z$ and the vertices of $\lk (tu)$. Let $z_1,z_2,\dots z_m$ be the vertices i $\lk u$, where $m \geq n-5$. Therefore,  $\mathcal{O}_u\geq \lambda(u,z)+ \sum_{i=1}^m\lambda(u,z_i) \geq 0.5 + \sum_{i=1}^{n-5}\lambda(u,z_i)\geq 1.5,1.75$ for $n=9, 10,$ respectively.
	
	For $n=8$, $f_0(\lk (ut))=5$ and $\lk (u)$ contains at least 3 vertices other than $z$ and the vertices of  $\lk (ut)$. Therefore, $d(u)\geq 10$. If $d(u)=10$, then $\lk (u)$ contains exactly 3 vertices, say $z_1,z_2,$ and $z_3$ other than vertices of  $\lk (ut)$, where $z_1,z_2,z_3\not\in \lk (t)$. Then from Lemma \ref{lemma:geq 4}, $\lambda(u,z_i)\geq 1/3$. Thus, $\mathcal{O}_u\geq1.5$. If $d(u)\geq 11$, then $\lk (u)$ contains at least 4 vertices other than $z$ and the vertices of  $\lk (ut)$, and hence $\mathcal{O}_u\geq 1.5$.

\smallskip

\noindent \textbf{Case 2:}  Let $\lk (u)\cap \lk (t)\setminus\lk (tu)$ contain exactly two vertices, say $z$ and $w$. We claim that $\lk (u) \setminus  \Star (t,\lk (u))$ contains at least three vertices (i.e., one extra vertex other than $z$ and $w$). If possible, let $\lk (u) \setminus  \Star (t,\lk (u))$ have exactly two vertices $z,w$. Since $D_tu$ does not contain any diagonal (cf. Lemma \ref{lemma:missing-triangle}) and $D_tu$ is a triangulated disc, $zw$ must be an edge, and each vertex in $\lk (tu)$ is joined with either $z$ or $w$ (or both) in $\lk (u)$. Let $\lk (zw,\lk (u))=\{p,q\}$. Then $p,q\in \lk (tu)$ and $p, q$ are joined with both $z$ and $w$.

 If possible, let $zw\not\in\lk (t)$. If $p$ (respectively $q$) is non-singular, then it follows from  Lemma \ref{lemma:interior 2} that $p$ (respectively $q$) is not joined with $z$ and $w$ in $\lk (t)$. Further,  Lemma \ref{lemma:interior 2} implies that a non-singular vertex in $\lk (tu)$, which is joined with $z$ (respectively $w$) in $\lk (u)$, is not joined with $z$ (respectively $w$) in $\lk (t)$. Therefore, 	a  non-singular vertex $v (\neq p,q ) \in \lk (tu)$ is joined with at most one of $z$ and $w$ in $\lk (t)$. Since $V(\lk (t))=\{z,w,u\}\cup V(\lk(tu))$ and $\lk (t)$ contains at most one singular vertex, we have $d(tz)+d(tw)\leq n-3$.  If $n\leq 10$, then $d(tz)+d(tw)\leq 7$, which contradicts the hypothesis of Lemma \ref{lemma:d>4}. Therefore, $zw$ must be an edge in $\lk (t)$.
 
 Let $\lk (zw,\lk (t))=\{r,s\}$.  Then $rz, rw, sz,$ and $sw$ are edges in $\lk (t)$. Since $V(\lk (t))=\{z,w,u\}\cup V(\lk(tu))$ and $\lk (t)$ contains at most one singular vertex, without loss of generality, we assume that $r \in  \lk (tu)$ is a non-singular vertex. Since $r \in  \lk (tu)$, $r$ is joined with either $z$ or $w$ in $\lk (u)$. This contradicts the hypothesis of the Lemma \ref{lemma:interior 2}.   
 
 Therefore, $\lk (u) \setminus  \Star (t,\lk (u))$ has at least three vertices.  Let $x$ be the third vertex. If $\lk (u) \setminus  \Star (t,\lk (u))$ has exactly three vertices, then $d(u)=1+n-3+3=n+1$. Therefore, $\lambda(u,x)\geq 0.33$ for $n=8,9$, and $\lambda(u,x)\geq 0.25$ for $n=10$. Thus, $\mathcal{O}_u\geq \lambda(u,z)+\lambda(u,w)+\lambda(u,x)\geq 1.33$ for $n=8,9,$ and 
	$\mathcal{O}_u \geq 1.25$ for $n=10$. If $\lk (u) \setminus  \Star (t,\lk (u))$ has  more than three vertices, then $\mathcal{O}_u \geq 1.5$. 	
\end{proof}


\section{Normal 3-pseudomanifolds with exactly one singularity}

In this section we consider  normal 3-pseudomanifolds with exactly one singularity. Let us denote  $\mathcal{R}_1=\{K\in \mathcal{R}\,:\,$  $K$ has exactly one singularity$\}$. Let $K\in \mathcal{R}_1$ and $t$ be the singular vertex in $K$. Then $\lk (t,K)$ is either a connected sum of tori or a connected sum of Klein bottles. In short, we say that $|\lk (t,K)|$ is a closed connected surface with $h$ number of handles, for $h\geq 1$. For $m \geq 4$, let $x_m$ be the number of vertices in $\lk (t,K)$ with degree $m$ in $\lk (t,K)$.

\begin{lemma}\label{lemma:12}
Let $K\in \mathcal {R}_1$, and $t$ be the singular vertex in $K$. Then $\sum_{v \in \lk (t)} \mathcal{O}_v \geq 10$.
\end{lemma}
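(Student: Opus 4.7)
The plan is to split on $n := f_0(\lk(t,K))$. Since $K \in \mathcal{R}_1$ has $t$ as its unique singular vertex, Lemma \ref{lemma:degree-singular} gives $n \geq 8$, and the strengthened conclusion of Lemma \ref{lemma:O_v(2)} (no second singular vertex in $\lk(t)$) already yields the baseline
$$\sum_{v \in \lk(t)} \mathcal{O}_v \;\geq\; n.$$
Thus the case $n \geq 10$ is immediate, and the real work is for $n \in \{8,9\}$, where we must extract a surplus of at least $10 - n$ above the baseline.

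The leverage for this surplus comes from the topological constraint $\chi(\lk(t)) = 2 - 2h \leq 0$. Using $\sum_{m \geq 4} x_m = n$ together with the surface identity
$$\sum_{m \geq 4} m\,x_m \;=\; 2f_1(\lk(t)) \;=\; 6n - 6\chi(\lk(t)) \;\geq\; 6n,$$
one obtains a degree-surplus in $\lk(t)$ of at least $6h$, which forces the existence of several vertices $u \in \lk(t)$ whose $\lk(ut)$ is a long cycle (length $n-2$ or $n-3$, since $d(ut) \leq n-1$).

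For each such vertex I would apply the improved outer-weight bounds: Lemma \ref{lemma:n-2(2)} gives $\mathcal{O}_u \geq 2.5$ when $n=8, d(ut)=6$ and $\mathcal{O}_u \geq 2.75$ when $n=9, d(ut)=7$, while Lemma \ref{lemma:1.33(2)} gives $\mathcal{O}_u \geq 1.33$ when $d(ut) = n-3$ (for $n = 8,9$). Vertices with smaller $d(ut) \in \{4,5\}$ are handled by Lemmas \ref{lemma:4,5} and \ref{lemma:5,6}, which either give $\mathcal{O}_u \geq 1$ (no degradation of the baseline) or force the pairing situation $\mathcal{O}_u = 0.5$ but then produce a partner $z \in \lk(t)$ with $\mathcal{O}_z \geq 1.5$ via Lemmas \ref{lemma:bound:0.5(a)} and \ref{lemma:O(u)=1}. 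These partner-boosts are exactly the $n/2$ slack counted in the proof of Lemma \ref{lemma:O_v(2)}, and here they must be accumulated rather than discarded.

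The main obstacle is the extremal case $n=8, h=1$, where the degree-surplus is only $6$ and we need to gain $2$ above the baseline. The argument reduces to showing, by a careful accounting on the degree sequence $(x_4,x_5,x_6,x_7)$ subject to $\sum x_m = 8$ and $\sum m\, x_m = 48$, that either (i) there are sufficiently many vertices with $d(ut) \geq 6$ to supply the gain via Lemmas \ref{lemma:n-2(2)}--\ref{lemma:1.33(2)}, or (ii) the remaining low-degree vertices come in $\mathcal{O}_u = 0.5$ clusters whose forced partners $z$ have outer weight $\geq 1.5$, compensating by the combinatorial structure of $\lk(t)$. Once this bookkeeping is done simultaneously with the case $n=9$ (which is strictly easier, needing only a gain of $1$), the inequality $\sum_v \mathcal{O}_v \geq 10$ follows.
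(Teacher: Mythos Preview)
Your overall skeleton matches the paper's: handle $n \geq 10$ via the sharpened Lemma~\ref{lemma:O_v(2)}, and for $n \in \{8,9\}$ run a linear-programming count on the degree sequence in $\lk(t)$ using the per-vertex bounds from Lemmas~\ref{lemma:n-2(2)} and~\ref{lemma:1.33(2)}. But you have missed the simplification that drives the paper's argument, and in its place you propose a partner-boost scheme that is both unnecessary and misreferenced.

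The point you are missing is that in $\mathcal{R}_1$ the case $\mathcal{O}_u = 0.5$ \emph{cannot occur at all} for $n \in \{8,9\}$. For $n=8$ this is immediate from Lemma~\ref{lemma:bound:0.5(a)}, which forces $n \geq 2m+1 \geq 9$. For $n=9$ with $d(ut)=4$, if $\mathcal{O}_u = 0.5$ then $\lk(t) \cap \lk(u) - \lk(ut)$ is a single vertex $z$; since $K \in \mathcal{R}_1$ there is no second singular vertex inside $\lk(t)$, so Lemma~\ref{lemma:interior} gives $\lk(tz) \cap \lk(tu) = \emptyset$, and counting yields $f_0(\lk(t)) \geq 4+4+2 = 10$, a contradiction. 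Once every vertex of $\lk(t)$ satisfies $\mathcal{O}_u \geq 1$, the LPP is immediate. For $n=8$ one also has $x_7=0$ (no room in $\lk(t)$ for Lemma~\ref{lemma:nonempty}), so the constraints $\sum x_m = 8$, $\sum m\,x_m \geq 48$ with $m \leq 6$ force $x_6 = 8$, and Lemma~\ref{lemma:n-2(2)} alone gives $\sum_v \mathcal{O}_v \geq 19$. For $n=9$ the analogous minimization gives $\sum_v \mathcal{O}_v \geq 11.97$. Both are far above $10$, so the ``gain of $2$'' you flag as the main obstacle is a non-issue.

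Your fallback plan of pairing an $\mathcal{O}_u = 0.5$ vertex with a partner $z$ having $\mathcal{O}_z \geq 1.5$ invokes Lemma~\ref{lemma:O(u)=1}, but that lemma treats $\mathcal{O}_u = 1$, not $0.5$; the relevant statement for $\mathcal{O}_u = 0.5$ is Lemma~\ref{lemma:4 cycle}. Even with the correct reference this detour is superfluous: the one-singularity hypothesis kills the $0.5$ case outright, and you should use that rather than leave the bookkeeping undone.
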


\begin{proof}
It follows from Lemma \ref{lemma:degree-singular} that $f_0(\lk (t,K))\geq 8$. First, let us assume  $f_0(\lk (t,K))= 8$. It follows from  Lemma \ref{lemma:open edge} that $x_m=0$ for $m=7$. Let $u\in \lk (t,K)$ be any non-singular vertex such that $d(ut)=4$. It follows from Lemmas \ref{lemma:bound:0.5(a)} and \ref{lemma:4,5}  that  $\mathcal{O}_u\geq 1$. Further, Lemmas \ref{lemma:n-2(2)} and \ref {lemma:n-3(2)} imply  $\sum_{v \in \lk (t)} \mathcal{O}_v \geq x_4 + 1.33 x_5 + 2.375 x_6$, where $x_4 + x_5 + x_6  = 8$ and $4 x_4 + 5 x_5 + 6 x_6 = 48$. Thus, by solving the L.P.P., $\sum_{v \in \lk (t)} \mathcal{O}_v \geq 19$.

Now, we assume that  $f_0(\lk (t,K))= 9$. It follows from  Lemma \ref{lemma:nonempty} that $x_m=0$ for $m=8$. Let $u\in \lk (t,K)$ be any non-singular vertex such that $d(ut)=4$. It follows from Lemma \ref{lemma:4,5}  that either $\mathcal{O}_u=0.5$ or $\mathcal{O}_u\geq 1$. If $d(ut)=4$ and $\mathcal{O}_u=0.5$, then by Lemma \ref{lemma:open edge}, $\lk (t)\cap \lk (u)-\lk (tu)$ contains exactly one vertex, say $z$. From Lemma \ref{lemma:d>4}, we get $d(tz), d(uz)\geq 4$. Since $\lk (t,K)$ does not contain any singular vertex, by Lemma \ref{lemma:interior 2}, $\lk (tz,K)\cap \lk (tu,K)=\emptyset$. This implies, $f_0(\lk (t,K))\geq 10$. This is a contradiction. Therefore,   $\mathcal{O}_u\geq 1$. It follows from Lemmas \ref{lemma:5,6}, \ref{lemma:n-2(2)} and \ref {lemma:n-3(2)} that
$\sum_{v \in \lk (t)} \mathcal{O}_v \geq x_4 + x_5 + 1.33 x_6 + 2.75 x_7$, where $x_4 + x_5 + x_6 + x_7 = 9$ and $4 x_4 + 5 x_5 + 6 x_6 +7 x_7 = 54$. Thus, by solving the L.P.P., we get $\sum_{v \in \lk (t)} \mathcal{O}_v \geq 11.97$.
	
If   $f_0(\lk (t,K))\geq 10$, then from Lemma \ref{lemma:O_v(2)}, we have $\sum_{v \in \lk (t)} \mathcal{O}_v\geq  f_0(\lk (t,K)) \geq 10$. This proves the result.
\end{proof}

\begin{remark}{\rm
Let $K\in \mathcal{R}_1$, and $t$ be the singular vertex in $K$. Then the lower bound for  $\sum_{v \in \lk (t)} \mathcal{O}_v$ can  be easily improved from $10$. However, we did not move in that direction, as the lower bound 10 serves all of our purposes. 
}
\end{remark}

\begin{theorem}\label{theorem:6h+2}
If $K\in\mathcal{R}_1$, then  $g_2(K)\geq g_2(\lk (v, K))+10$ for every vertex $v\in K$.
\end{theorem}
\begin{proof}
Let $t$ be the singular vertex in $K$. It follows from Lemma \ref{lemma:12} that $ \sum_{u \in \lk (t)}\mathcal{O}_u\geq 10$.  Since $g_2(\lk (t,K)) \geq g_2(\lk (v,K))$ for any vertex $v$ in $K$, the result follows from  Lemma \ref{lemma:f1(K)(2)}. 
\end{proof}

\begin{theorem}\label{theorem:1-singularity}
Let $K$ be a normal $3$-pseudomanifold with exactly one singularity at $t$ such that $|\lk (t,K)|$ is a connected sum of $n$ copies of tori or Klein bottles for some positive
integer $n$. Then $g_2{(K)} \leq 9+6n$ implies $K$ is obtained from some boundary complexes  of $4$-simplices by a sequence of operations of types connected sums,  bistellar $1$-moves, edge contractions, edge expansions, and vertex foldings. More precisely, the sequence of operations includes exactly $n$ vertex foldings and a finite number of remaining operations. 
\end{theorem}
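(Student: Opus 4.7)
The plan is to proceed by strong induction on the lexicographically ordered pair $(g_2(K), f_0(K))$, and to reduce every case to a smaller instance of the same theorem or to Proposition \ref{proposition:BG} (the no-singularity base case).

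\emph{Step 1 ($K\notin\mathcal{R}_1$).} Because $|\lk(t,K)|$ is a closed surface of Euler characteristic $2-2n$ (both for tori and for Klein bottles), a direct count gives $g_2(\lk(t,K))=6n$. The hypothesis $g_2(K)\le 9+6n<10+6n$ combined with Theorem \ref{theorem:6h+2} forces $K\notin\mathcal{R}_1$. Hence at least one of the defining properties of $\mathcal{R}_1$ fails: either (i) $K$ contains some missing tetrahedron $\sigma$, or (ii) $K$ admits one of the operations $(A),(B),(C),(D)$ of Definition \ref{definition:operation} yielding a normal 3-pseudomanifold $K'$ with $g_2(K')<g_2(K)$.

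\emph{Step 2 (Case (ii)).} Each of $(A)$--$(D)$ preserves the PL-homeomorphism type of $|K|$, so $K'$ has exactly one singularity with $|\lk(t,K')|$ still a connected sum of $n$ tori/Klein bottles. The inductive hypothesis applied to $K'$ (strict decrease in $g_2$) yields a decomposition containing exactly $n$ vertex foldings. The inverse of $(A),(B),(C),(D)$ is respectively a bistellar $1$-move, an edge expansion, a composition of a bistellar $1$-move with an edge contraction, and a composition of an edge expansion with an edge contraction --- all listed operations --- so appending it produces the required decomposition of $K$.

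\emph{Step 3 (Case (i)).} Fix a missing tetrahedron $\sigma=abcd$ and split according to whether its boundary triangles separate the relevant links. In \emph{Subcase 3a}, suppose $\partial(\sigma-x)$ separates $\lk(x,K)$ for every $x\in\sigma$. Lemma \ref{connected sum and handle addition} says $K$ arises by handle addition or connected sum, and Lemma \ref{connected sum} (applicable since $g_2(K)\le g_2(\lk(t,K))+9$) excludes handle addition. Write $K=K_1\#_\psi K_2$; each $K_i$ has $f_0(K_i)\le f_0(K)-1$ and at most one singular vertex. Either $t\notin\sigma$ (then one piece has no singularity and the other inherits $\lk(t,K)$ with $n$ handles), or $t\in\sigma$ is the identification of $t_1\in K_1$ with $t_2\in K_2$, and $\lk(t,K)=\lk(t_1,K_1)\#\lk(t_2,K_2)$ splits the $n$ handles as $n_1+n_2=n$. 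In either situation, Lemma \ref{g2>face} gives $g_2(K_j)\ge 6n_j$, whence
\[
g_2(K_i)=g_2(K)-g_2(K_{3-i})\le 9+6n-6n_{3-i}=9+6n_i.
\]
Apply the inductive hypothesis to each $K_i$ (or Proposition \ref{proposition:BG} if $n_i=0$), producing decompositions with $n_1$ and $n_2$ vertex foldings, and combine via connected sum to obtain the decomposition of $K$ with $n$ vertex foldings.

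\emph{Step 4 (Subcase 3b).} Now suppose $\partial(\sigma-a)$ fails to separate $\lk(a,K)$ for some $a\in\sigma$. Every cycle in a $2$-sphere separates, so $\lk(a,K)$ is not a sphere and therefore $a=t$. For each $x\in\sigma\setminus\{t\}$, $|\lk(x,K)|\cong\mathbb{S}^2$ and the boundary triangle $\partial(\sigma-x)$ does separate. Lemma \ref{lemma:missingtetra2} furnishes a normal $3$-pseudomanifold $K'$ with $K=(K')^\psi_t$ obtained by a vertex folding at $t$. Since folding lowers the Euler characteristic of $\lk(t,\cdot)$ by $2$, $|\lk(t,K')|$ has $n-1$ handles, and $g_2(K')=g_2(K)-6\le 9+6(n-1)$. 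If $n\ge 2$, induction gives a decomposition of $K'$ with $n-1$ vertex foldings and appending the folding $K'\to K$ gives the required $n$. If $n=1$, $K'$ has no singularity and $g_2(K')\le 9$, so Proposition \ref{proposition:BG} supplies the decomposition of $K'$ by connected sums, bistellar $1$-moves, edge contractions and edge expansions, and the single vertex folding $K'\to K$ finishes the job.

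\emph{Main obstacle.} The most delicate bookkeeping is in Subcase 3a when $t\in\sigma$: one must show that the $9$ units of slack in $g_2(K)\le 9+6n$ can be allocated consistently to both summands while the $n$ handles at $t$ are distributed as $n_1+n_2$. The crucial input making this accounting go through cleanly is Kalai's lower bound $g_2(K_j)\ge g_2(\lk(t_j,K_j))=6n_j$ combined with the additivity $g_2(K_1)+g_2(K_2)=g_2(K)$ under connected sum; everything else is routine once this split is valid.
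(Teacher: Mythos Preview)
Your proof is correct and follows essentially the same strategy as the paper: invoke Theorem \ref{theorem:6h+2} to force $K\notin\mathcal{R}_1$, hence either a $g_2$-reducing operation or a missing tetrahedron exists, then reduce via connected sum (Lemmas \ref{connected sum and handle addition}, \ref{connected sum}) or vertex unfolding (Lemma \ref{lemma:missingtetra2}). The only difference is organizational: the paper inducts on the number of handles $n$ and, for each fixed $n$, iterates operations and connected-sum splittings with an ad hoc finiteness argument, while your lexicographic induction on $(g_2(K),f_0(K))$ handles one reduction at a time and makes termination automatic --- a slightly cleaner packaging of the same content.
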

\begin{proof}
Let $\Delta$ be a normal $3$-pseudomanifold with exactly one singularity at $t$ such that $|\lk (t,\Delta)|$ is a connected sum of $m$ copies of tori or Klein bottles for some positive integer $m$. Then $g_2(\lk (t,K))=6m$. Let  $g_2{(\Delta)} \leq 9+6m$. We have the following observation:

\begin{enumerate}

\item [Observation $1$:] Let $\bar{\Delta}$ (may be $\Delta$ itself) be a  normal 3-pseudomanifold obtained from $\Delta$ by the repeated applications of combinatorial operations mentioned in Remark \ref{remark:operation}, such that there is no normal 3-pseudomanifold $\Delta'$ that is obtained from $\bar \Delta$ by a combinatorial operation mentioned in Remark \ref{remark:operation} and $g_2(\Delta')< g_2(\bar \Delta)$. If $\bar \Delta$ has no missing tetrahedron, then $\bar \Delta\in \mathcal{R}_1$ and hence by Theorem \ref{theorem:6h+2}, $g_2(\bar \Delta)\geq 6m+10$. Thus,  $g_2(\Delta)\geq g_2(\bar \Delta)\geq 6m+10$. This contradicts the given condition. Therefore, $\bar \Delta$ must have a missing tetrahedron.
\end{enumerate}

There can be four types of missing  tetrahedra in $\Delta$:

\begin{enumerate}
\item [Type $1$:] Let $\sigma$ be a missing tetrahedron in $\Delta$ such that $t$ is not a vertex of $\sigma$.

\item [Type $2$:] Let $\sigma$ be a missing tetrahedron in $\Delta$ such that $t\leq \sigma$ and  $\lk (t,\Delta)$ is separated into two portions by the missing triangle formed by the other three vertices of $\sigma$, where one portion is a disc.

\item [Type $3$:] Let $\sigma$ be a missing tetrahedron in $\Delta$ such that $t\leq \sigma$ and  $\lk (t,\Delta)$ is not separated into two portions by the missing triangle formed by the other three vertices of $\sigma$.

\item [Type $4$:] Let $\sigma$ be a missing tetrahedron in $\Delta$ such that $t\leq \sigma$ and  $\lk (t,\Delta)$ is separated into two portions by the missing triangle formed by the other three vertices of $\sigma$, where no portions are triangulated discs.
\end{enumerate}

Now, we are ready to prove our result. Let $K$ be a  normal $3$-pseudomanifold with exactly one singularity at $t$ such that $|\lk (t,K)|$ is a connected sum of $n$ copies of tori or Klein bottles and $g_2{(K)} \leq 9+6n$ for some positive integer $n$.  We use the principle of mathematical induction on $n$. If a normal 3-pseudomanifold $\Delta$ has no singular vertices and  $g_2{(\Delta)} \leq 9$, then we can assume $K=\Delta$, $n=0$, and $t$ is any vertex of $\Delta$. By Proposition \ref{proposition:BG}, we can say that the result is true for $n=0$. Let us assume that the result is true for $0,1,\dots,n-1$, and let $K$ be the  normal $3$-pseudomanifold that corresponds to $n$.

\begin{enumerate}

\item [Step $1$:] Let $\Delta$ be a  normal 3-pseudomanifold obtained from $K$ by repeated applications of the combinatorial operations mentioned in Remark \ref{remark:operation}, such that there is no normal 3-pseudomanifold $K'$ that is obtained from $\Delta$ by a combinatorial operation mentioned in Remark \ref{remark:operation} and $g_2(K')< g_2(\Delta)$. Then by Observation 1, we get $\Delta$ must have a missing tetrahedron.

\item [Step $2$:] Let $\Delta$ have a missing tetrahedron of Type 1 or 2. Then it follows from Lemma \ref{connected sum} that $\Delta$ is formed using a connected sum of $\Delta_1$  and $\Delta_2$. 
 Let $t\in \Delta_1$ (in case of Type 2, we take $t$ as a vertex in $\Delta_1$ such that $\lk (t,\Delta_1)$ is a connected sum of $n$ number of handles). Then $g_2(\Delta_1)\geq g_2(\lk (t,\Delta_1))\geq 6n$.  Therefore $g_2(\Delta_2)=g_2(\Delta)-g_2(\Delta_1)\leq 9$. Thus, after a finite number of steps, we have  $\Delta=\Delta_1\#\Delta_2\#\cdots\#\Delta_n$, where $(i)$ $t\in \Delta_1$ and $|\lk (t,\Delta_1)|$ is a connected sum of $n$ copies of tori or Klein bottles, $(ii)$ $\Delta_1$ has no missing  tetrahedron of Types 1 and 2, $(iii)$ $6n \leq g_2(\Delta_1)\leq 6n+9$, and $(iv)$ for $2\leq i\leq n$, $\Delta_i$ has no singular vertices, and $g_2(\Delta_i)\leq 9$. Thus, by Proposition \ref{proposition:BG} we have, for $2\leq i\leq n$, $\Delta_i$ is a triangulated $3$-sphere and is obtained from some boundary complexes of $4$-simplices by a sequence of operations of types connected sums,  bistellar $1$-moves, edge contractions, and edge expansions. If  there is no normal 3-pseudomanifold $\Delta_1'$, which is obtained from $\Delta_1$ by a combinatorial operation mentioned in Remark \ref{remark:operation} and $g_2(\Delta_1')< g_2(\Delta_1)$, then by Observation 1, $\Delta_1$ has a missing tetrahedron of Type 3 or 4, and we move to Step 3 or 4, respectively. Otherwise, we move to Step 1 and replace $K$ with $\Delta_1$. Since $K$ has a finite number of vertices and $g_2(K)$ is also finite, after a finite number of steps, we must move to either Step 3 or Step 4.
 
\item [Step $3$:]  Let $\Delta_1$ have a missing tetrahedron of Type 3. It follows from  Lemma \ref{lemma:missingtetra2} that $\Delta_1$  is formed using a vertex folding from a  normal 3-pseudomanifold  $\Delta_1'$ at $t\in \Delta_1'$ and  $g_2(\Delta_1')=g_2(\Delta_1)-6$. Here $\lk (t,\Delta_1')$ is a connected sum of $n-1$ copies of tori or Klein bottle and $g_2(\Delta_1')\leq 9+6(n-1)$. Now, the result follows by the induction hypothesis.

\item [Step $4$:] Let $\Delta_1$ have a missing tetrahedron of Type 4. Then it follows from Lemma \ref{connected sum} that $\Delta_1$  is formed using a connected sum of $\Delta_1'$  and $\Delta_1''$. Let $t_1\in \Delta_1'$ and $t_2\in \Delta_1''$ be identified during the connected sum and produce $t \in \Delta_1$.
Let $\lk (t_1,\Delta_1')$ and $\lk (t_2,\Delta_1'')$ be  the connected sum of $n_1$ and $n_2$ copies of tori or Klein bottles, respectively, where $n_1+n_2=n$ for some positive integers $n_1, n_2$. Since $n_1,n_2>0$, both $n_1,n_2<n$. Further, $g_2(\Delta_1')\leq 9+6n_1$ and $g_2(\Delta_1'')\leq 9+6n_2$. Then the result follows by the  induction hypothesis.
\end{enumerate}
From the construction, we can see that the sequence of operations includes exactly $n$ number of vertex foldings and a finite number of remaining operations.
\end{proof}

\begin{remark}\label{remark:sharp1}{\rm
The upper bound in Theorem \ref{theorem:1-singularity} is sharp. In other words, there exists a  normal $3$-pseudomanifold with exactly one singularity such that $g_2{(K)} = 10+6n$ and $K$ is not obtained from some boundary complexes of $4$-simplices by a sequence of operations of types connected sums,  bistellar $1$-moves, edge contractions, edge expansions, and vertex foldings. Let us take a connected sum of a finite number of the boundary complexes of the $4$-simplices, and then apply a handle addition. Let $\Delta_0$ be the resulting 3-dimensional manifold. Then  $g_2{(\Delta_0)} = 10$. Note that we can choose either an orientable or a non-orientable manifold according to our purpose. For $1\leq i \leq n$, let  $\Delta_i$ be a normal 3-pseudomanifold obtained from  a connected sum of a finite number of the boundary complexes of the $4$-simplices by applying a vertex folding at $v_i$, where $v_i\in\Delta_i$. Then  $g_2{(\Delta_i)} = 6$ and  $\lk (v_i,\Delta_i)$ is a torus or Klein bottle (choose the surface according to the purpose).  Let $\Delta$ be the connected sum $\Delta_0\#\Delta_1\#\cdots\#\Delta_n$, where the vertices $v_1,\dots,v_n$ are identified to a single vertex $v$. Then $\Delta$ is a  normal $3$-pseudomanifold with exactly one singularity at $v$ such that $g_2{(\Delta)} = 10+6n$. This  normal $3$-pseudomanifold $\Delta$ will serve  our purpose.
 }
\end{remark}

Let $K$  be formed using a vertex folding from a  normal 3-pseudomanifold  $K'$ at $t\in K'$ and $|K'|$ is a handlebody with its boundary coned off. Then  $K$  is the following pseudomanifold: take $K'[V(K')\setminus \{t\}]$ (the induced subcomplex of $K'$ on the vertex set $V(K')\setminus \{t\}$), identify two triangles (with an admissible bijection between them) on the boundary, then coning off the boundary at $t$. Therefore,  $|K|$ is a handlebody with its boundary coned off.  Further, let $\lk (t_1,\Delta_1)$ and $\lk (t_2,\Delta_2)$ be  the connected sum of $n_1$ and $n_2$ copies of tori or Klein bottles, respectively, where $n_1+n_2=n$ for some positive integers $n_1, n_2$. Let $\Delta_1$ and $\Delta_2$ be triangulated  handlebodies with its boundary coned off at $t_1$ and $t_2$, respectively. Let $\Delta=\Delta_1\#\Delta_2$, where $t_1\in \Delta_1$ and $t_2\in \Delta_2$ are identified to $t \in \Delta$ during the connected sum. Then  $\Delta$ is the following pseudomanifold: take two triangulated handlebodies $\Delta_1[V(\Delta_1)\setminus \{t_1\}]$ and $\Delta_2[V(\Delta_2)\setminus \{t_2\}]$, identify two triangles  from each of  the boundaries, then coning off the boundary at $t$. Therefore,  $|\Delta|$ is a handlebody with its boundary coned off. Therefore, from the proof of Theorem \ref{theorem:1-singularity} and Remark \ref{remark:sharp1}, we have the following result:

\begin{corollary}\label{corollary:handlebody}
	Let $K$ be a normal $3$-pseudomanifold with exactly one singularity at $t$ such that $|\lk (t,K)|$ is a connected sum of $n$ copies of tori or Klein bottles for some positive
integer $n$. Then
	$g_2{(K)} \leq 9+6n$ implies $|K|$ is a handlebody with its boundary coned off. Moreover, there exists a  normal $3$-pseudomanifold with exactly one singularity such that $g_2{(K)} = 10+6n$ and $|K|$ is not a handlebody with its boundary coned off.
\end{corollary}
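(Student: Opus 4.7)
The plan is to derive the corollary from Theorem \ref{theorem:1-singularity} (for the implication) and Remark \ref{remark:sharp1} (for the sharpness), tracking the topology through each combinatorial move. For the implication, I would use the decomposition provided by Theorem \ref{theorem:1-singularity}: $K$ is built from boundary complexes of $4$-simplices by a sequence involving exactly $n$ vertex foldings together with connected sums, bistellar $1$-moves, edge contractions, and edge expansions. I would induct on $n$. The base case $n=0$ is immediate because the remaining four operations preserve PL-homeomorphism type and a connected sum of $3$-spheres is $\mathbb{S}^3$, i.e., a genus-$0$ handlebody with boundary coned off. For the inductive step, consider the final vertex folding producing $\bar K$ at a vertex $t$ from some $\bar K'$; operations performed afterwards preserve $|\cdot|$, so $|K|\cong|\bar K|$. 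By induction $|\bar K'|$ is a handlebody $H_{n-1}$ with boundary coned off at $t$. Using the alternative description of vertex folding given in the paper (delete $t$, identify two triangles on the resulting boundary via the admissible bijection, re-cone), the boundary identification topologically attaches a $1$-handle to $H_{n-1}$, producing $H_n$; the re-coning then realises $|\bar K|$ as a genus-$n$ handlebody with boundary coned off.

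For the sharpness statement, I would exhibit the complex $\Delta$ of Remark \ref{remark:sharp1}. Since $\Delta_0$ is built by a single handle addition on a connected sum of boundary complexes of $4$-simplices, the underlying space $|\Delta_0|$ is $\mathbb{S}^2\times\mathbb{S}^1$ (or its non-orientable analogue), and in particular $H_1(|\Delta_0|;\mathbb{Z}/2)\neq 0$. Each $\Delta_i$ for $1\le i\le n$ is a genus-$1$ handlebody with a torus coned off at $v_i$, with $g_2(\Delta_i)=6$. The iterated connected sum $\Delta=\Delta_0\#\Delta_1\#\cdots\#\Delta_n$ identifying all $v_i$ to a single vertex $v$ yields a normal $3$-pseudomanifold with unique singular vertex $v$ whose link is a connected sum of $n$ tori, with $g_2(\Delta)=10+6n$. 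A Mayer--Vietoris argument across each connected-sum $2$-sphere shows that the non-trivial $\mathbb{Z}/2$-class of $H_1(|\Delta_0|;\mathbb{Z}/2)$ survives in $H_1(|\Delta|;\mathbb{Z}/2)$, since each handlebody summand has all of its $H_1$ classes killed after being absorbed into the cone at $v$. On the other hand, for any $|L|=H_g\cup_{\partial H_g}C(\partial H_g)$, attaching the cone is homotopically the same as collapsing $\partial H_g$, and the long exact sequence of the pair $(H_g,\partial H_g)$ combined with the surjectivity of $H_1(\partial H_g;\mathbb{Z}/2)\to H_1(H_g;\mathbb{Z}/2)$ gives $\tilde H_1(|L|;\mathbb{Z}/2)=0$. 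This discrepancy rules out $|\Delta|$ being a handlebody with boundary coned off.

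The main obstacle I anticipate is the sharpness step: rigorously justifying that the $H_1$-class introduced by the handle addition on $\Delta_0$ is not killed by the subsequent connected sums at $v$, and that the homology comparison with the standard handlebody-with-boundary-coned-off model is clean (in particular, that collapsing the boundary genuinely kills $H_1$ for both orientable and non-orientable handlebodies). The forward direction is essentially a bookkeeping exercise, since the topological effect of each of the five allowed moves has already been analysed in the paper, and my argument only needs to assemble them inductively using the paper's own alternative description of vertex folding.
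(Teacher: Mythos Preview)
Your overall strategy matches the paper's: deduce the topology from the inductive decomposition in Theorem~\ref{theorem:1-singularity}, and use the construction of Remark~\ref{remark:sharp1} for sharpness. However, the forward direction as you have written it has a genuine gap. You induct on $n$ and pass to ``the final vertex folding,'' asserting that all operations applied afterwards preserve $|\cdot|$. This is not true in general: the recursion in the proof of Theorem~\ref{theorem:1-singularity} has, besides Step~3 (vertex unfolding), a Step~4 in which $\Delta_1$ splits as a connected sum $\Delta_1'\#\Delta_1''$ \emph{at the singular vertex $t$}, with both pieces carrying a singularity ($n_1,n_2>0$). In the forward direction this connected sum is performed \emph{after} the vertex foldings that created the singularities in each piece, and it does not preserve the homeomorphism type. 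Your inductive step, which only treats vertex folding, does not cover this case. The paper's argument (the paragraph immediately preceding the corollary) deals with exactly this point: it shows separately that (i) vertex folding at $t$ of a handlebody-coned-off-at-$t$ yields another such space, and (ii) a connected sum identifying the cone points of two handlebodies-with-boundary-coned-off is again a handlebody with boundary coned off (delete both cone points, glue the two handlebodies along a boundary triangle, re-cone). You need both closure properties, and your induction should follow the branching of Theorem~\ref{theorem:1-singularity} rather than a linear ``last vertex folding'' picture.

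Your sharpness argument is actually more explicit than the paper's: the paper simply points to Remark~\ref{remark:sharp1} and asserts the conclusion, whereas you supply a homological obstruction. That argument is sound: for $|L|=H_g\cup C(\partial H_g)$ one has $\tilde H_1(|L|;\mathbb{Z}/2)\cong H_1(H_g,\partial H_g;\mathbb{Z}/2)=0$ since the boundary inclusion is $H_1$-surjective, while a Mayer--Vietoris across each connected-sum $2$-sphere (which has $H_1=0$) shows $H_1(|\Delta|;\mathbb{Z}/2)$ contains $H_1(|\Delta_0|;\mathbb{Z}/2)\neq 0$ as a direct summand, the $\Delta_i$ ($i\ge 1$) contributing nothing since they already have trivial $H_1$. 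So this part of your proposal is fine and in fact fills in detail the paper omits.
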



\section{Normal 3-pseudomanifolds with exactly two singularities}
In this section we will consider normal 3-pseudomanifolds with exactly two singularities. Let us denote  $\mathcal{R}_2=\{K\in \mathcal{R}\, : \, K$ has exactly two singularities\}. Let $K\in \mathcal{R}_2$ and $t$ be the singular vertex in $K$ as in Definition \ref{edge-weight}, i.e., $g_2(\lk (t,K)) \geq g_2(\lk (v,K))$ for any vertex $v$ in $K$. By Lemma \ref{lemma:degree-singular 2}, we can assume that $d(t)\geq 8$. Let $y_4$ be the number of non-singular vertices of degree $4$ in $\lk (t)$  whose outer weight is $0.5$, and let $x_n$ be the number of non-singular vertices of degree $n$ in $\lk (t)$ with outer weight greater than or equal to 1, for $4\leq n \leq 9$.

\begin{lemma}\label{lemma:8(2)}
Let $K\in \mathcal{R}_2$, and $t$ be the singular vertex in $K$ as in Definition $\ref{edge-weight}$. If  $f_0(\lk (t,K))=8$, then $\sum_{v \in \lk (t)} \mathcal{O}_v >10$.
\end{lemma}
\begin{proof}
Since $f_0(\lk (t,K))=8$, it follows from  Lemma \ref{lemma:nonempty} that $x_7=0$. Let $u\in \lk (t,K)$ be any non-singular vertex such that $d(ut)=4$. It follows from Lemmas \ref{lemma:bound:0.5(a)} and \ref{lemma:4,5}  that  $\mathcal{O}_u\geq 1$. We take the outer weight of any seven non-singular vertices of $K$ contained in $\lk (t)$. It follows from  Lemmas \ref{lemma:n-2(2)} and \ref {lemma:n-3(2)} that
$\sum_{v \in \lk (t)} \mathcal{O}_v \geq x_4 + 1.33x_5 + 2.375x_6$. Further, we have $x_4 + x_5 + x_6 = 7$, and $4x_4 + 5x_5 + 6x_6\geq 35$.  Therefore, by solving the L.P.P., we get $\sum_{v \in \lk (t)} \mathcal{O}_v \geq 9.31$. \end{proof}

	\begin{lemma}\label{lemma:9(2)}
		Let $K\in\mathcal{R}_2$, and $t$ be the singular vertex  in $K$ as in Definition $\ref{edge-weight}$. If $f_0(\lk (t,K))=9$, then $\sum_{v \in \lk (t)} \mathcal{O}_v >9$.
	\end{lemma}
	
	\begin{proof}
	Since $f_0(\lk (t,K))=9$, Lemma \ref{lemma:open edge} implies $x_8=0$.  It follows from Lemma \ref{lemma:4,5}  that $\mathcal{O}_u=0.5$ or $\geq 1$, when $d(tu)=4$.  Further, Lemmas \ref{lemma:5,6}, \ref{lemma:n-2(2)}, and \ref{lemma:n-3(2)} imply   $\mathcal{O}_u \geq 1$ when $d(tu)=5$, $\mathcal{O}_u \geq 1.33$ when $d(tu)=6$, and $\mathcal{O}_u \geq 2.75$ when $d(tu)=7$. Let $t_1$ be the singular vertex in $K$ other than $t$. Now we have the following cases:
	
\noindent \textbf{Case 1:} Let there be a non-singular vertex $u\in \lk (t,K)$ such that $\mathcal{O}_u=0.5$. Since $f_0(\lk (t,K))$ $=9$, it follows from Lemma \ref{lemma:4 cycle} that there must be a vertex $z\in \lk (t)$ such that $(i)$ $\mathcal{O}_z\geq 2$, $(ii)$ $d(tu)=d(tz)=4$, and $(iii)$ $\lk (tu)\cap \lk (tz) =\{t_1\}$. Further, if $\lk (t,K)$ contains at least two non-singular vertices, say $u_1$ and $u_2$, of $K$ such that $\mathcal{O}_{u_1}=\mathcal{O}_{u_2}=0.5$, then  we have two vertices $z_1\neq z_2$ such that $\mathcal{O}_{z_1},\mathcal{O}_{z_2}\geq 2$ and $d(tu_1)=d(tu_2)=d(tz_1)=d(tz_2)=4$. Then   $\sum_{v \in \lk (t)} \mathcal{O}_v \geq 0.5y_4 + x_4 + x_5 + 1.33x_6 + 2.75x_7 +2z$, with one of the following conditions:

\begin{enumerate}[$(i)$]
\item  $y_4 + x_4 + x_5 + x_6  + x_7 + z =8$,  $4y_4+4x_4 + 5x_5 + 6x_6 + 7x_7 + 4z \geq 40$, $y_4 =1$, $z= 1$.
\item  $y_4 + x_4 + x_5 + x_6  + x_7 + z =8$,  $4y_4+4x_4 + 5x_5 + 6x_6 + 7x_7 + 4z \geq 40$, $y_4 \geq 2$, $z\geq 2$.
\end{enumerate}
Thus, by solving the L.P.P., we get $\sum_{v \in lk} \mathcal{O}_v \geq 9.16$.

\noindent \textbf{Case 2:} Suppose that for all non-singular vertices $u\in \lk (t,K)$, $\mathcal{O}_u> 0.5$.  It follows from Lemmas \ref{lemma:4,5} and \ref{lemma:5,6} that $\mathcal{O}_u \geq 1$, when $d(tu)=4$ and $5$. Further, Lemmas \ref{lemma:n-2(2)} and \ref{lemma:n-3(2)} imply  $\mathcal{O}_u \geq 1.33$ when $d(tu)=6$, and $\mathcal{O}_u \geq 2.75$ when $d(tu)=7$. 

\noindent \textbf{Case 2a:}  Let there be a vertex $u\in \lk (t)$ such that $d(ut)=5$ and $\mathcal{O}_u=1$. Then we must have a vertex $z$ in $\lk (t)$ such that  $\mathcal{O}_z\geq 2$ and $d(tz)=4$. Then   $\sum_{v \in \lk (t)} \mathcal{O}_v \geq x_4 + x_5 + 1.33x_6 + 2.75x_7 +2z$, where  $x_4 + x_5 + x_6  + x_7 + z \geq 8$,  $4x_4 + 5x_5 + 6x_6 + 7x_7 + 4z \geq 40$, $x_5 \geq 1$, and $z\geq 1$. Thus, by solving the L.P.P., we get $\sum_{v \in lk(t)} \mathcal{O}_v \geq 9.2$.

\noindent \textbf{Case 2b:} 
Suppose that  $\mathcal{O}_u>1$ for all non-singular vertices $u\in \lk (t,K)$, with $d(ut) = 5$, i.e.,   $\mathcal{O}_u>1.16$. Then   $\sum_{v \in \lk (t)} \mathcal{O}_v \geq x_4 + 1.16x_5 + 1.33x_6 + 2.75x_7$, where $x_4 + x_5 + x_6  + x_7 \geq 8$ and  $4x_4 + 5x_5 + 6x_6 + 7x_7 \geq 40$. Thus, by solving the L.P.P., we get $\sum_{v \in lk(t)} \mathcal{O}_v \geq 9.28$.	\end{proof}

\begin{lemma}\label{lemma:10(2)}
	Let $K\in\mathcal{R}_2$, and $t\in K$ be the singular vertex as in Definition $\ref{edge-weight}$.  If $f_0(\lk (t))=10$, then $\sum_{v \in \lk (t)} \mathcal{O}_v >9$.
\end{lemma}
\begin{proof}
 If there is a non-singular vertex $u\in \lk (t)$ such that $\mathcal{O}_u=0.5$, then from Lemma 3.18 we get $\sum_{v \in \lk (t)} \mathcal{O}_v\geq f_0(\lk (t))-1/2$. Therefore,  $\sum_{v \in \lk (t)} \mathcal{O}_v>9$. Now, we assume that, for all non-singular vertex $u\in \lk (t)$, $\mathcal{O}_u>0.5$. Then by Lemmas \ref{lemma:5,6} and \ref{lemma:4 cycle}, we have $\mathcal{O}_u \geq 1$, when $d(tu)=4,5$ and $6$. Further, Lemmas \ref{lemma:n-2(2)} and \ref{lemma:n-3(2)} imply  $\mathcal{O}_u \geq 1.25$ when $d(tu)=7$ and $\mathcal{O}_u \geq 3.125$ when $d(tu)=8$. 
Since $f_0(\lk (t,K))=10$, it follows from  Lemma \ref{lemma:open edge}, $x_9=0$.

\noindent \textbf{Case 1:} If there is a non-singular vertex $u\in \lk (t)$ such that  $\mathcal{O}_u=1$ for $d(tu)=4,5$ or $6$, then By Lemma \ref{lemma:O(u)=1}, there is another vertex $z \in \lk (t)$ such that $\mathcal{O}_z \geq 1.5$. Then $\sum_{v \in lk} \mathcal{O}_v \geq  x_4 + x_5 + x_6 + 1.25x_7 +3.125 x_8 + 1.5z$, where $z\geq 1$ and $x_4 + x_5 + x_6  + x_7 + x_8 + z \geq 9$. Therefore,  $\sum_{v \in lk(t)} \mathcal{O}_v \geq 9.5$.

\noindent \textbf{Case 2:} For all non-singular vertices  $u\in \lk (t)$,  $\mathcal{O}_u>1$. Then $\sum_{v \in lk(t)} \mathcal{O}_v >  x_4 + x_5 + x_6 + x_7 + x_8\geq 9$. 

This proves the result.
\end{proof}

\begin{theorem}\label{theorem:3h}
If $K\in\mathcal{R}_2$, then  $g_2(K)\geq g_2(\lk (v, K))+10$ for every vertex $v\in K$.
\end{theorem}
\begin{proof}
Let $t$ be the singular vertex in $K$ as in Definition \ref{edge-weight}, i.e., $g_2(\lk (t,K)) \geq g_2(\lk (v,K))$ for any vertex $v$ in $K$.  If  $8\leq f_0(\lk (t))\leq 10$, then it follows from Lemmas \ref{lemma:8(2)}, \ref{lemma:9(2)} and \ref{lemma:10(2)} that $ \sum_{u \in \lk (t)}\mathcal{O}_u\geq 10$.  If $f_0(\lk (t))\geq 11$, then it follows from Lemma \ref{lemma:O_v(2)} that $ \sum_{u \in \lk (t)}\mathcal{O}_u\geq 10$. Now, the result follows from  Lemma \ref{lemma:f1(K)(2)}. 
\end{proof}

Let $K$ be a  normal $3$-pseudomanifold with exactly two singularities  at $t$ and $t_1$ such that  $|\lk (t_1)|\cong \mathbb{RP}^2 $. Then $|\lk (t)|$ is a connected sum of $(2m-1)$ copies of $\mathbb{RP}^2$ for some $m\in \mathbb{N}$, and  $g_2(\lk (t, K))=6m-3$. If  $K\in \mathcal{R}_2$, then  $g_2(K)\geq 7+6m$.

\begin{theorem}\label{theorem:2-singularity}
Let $K$ be a  normal $3$-pseudomanifold with exactly two singularities  at $t$ and $t_1$ such that $|\lk (t)|$ is a connected sum of $(2m-1)$ copies of $\mathbb{RP}^2$ and $|\lk (t_1)|\cong \mathbb{RP}^2 $ for some positive integer $m$. Then $g_2{(K)} \leq 6+6m$ implies that $K$ is obtained from some boundary complexes  of $4$-simplices by a sequence of operations of types connected sums,  bistellar $1$-moves, edge contractions, edge expansions, vertex foldings, and edge foldings. More precisely, the sequence of operations includes exactly $(m-1)$ vertex foldings, one edge folding and a finite number of remaining operations. Further, this upper bound is sharp for such normal $3$-pseudomanifolds.
\end{theorem}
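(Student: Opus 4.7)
The plan is to prove Theorem~\ref{theorem:2-singularity} by induction on $m$, mirroring the structure of the proof of Theorem~\ref{theorem:1-singularity} but adding a fifth case to the missing-tetrahedron dichotomy that corresponds to edge folding. The base case is $m=1$, where $|\lk(t,K)|\cong|\lk(t_1,K)|\cong\mathbb{RP}^2$ and $g_2(K)\leq 12$, and the claim there is that $K$ admits the desired sequence using no vertex folding and exactly one edge folding. For the inductive step (assuming the result for $1,2,\ldots,m-1$), the goal is to produce a single vertex folding at $t$ whose unfolding removes a pair of crosscaps from $|\lk(t)|$, sending $\#_{2m-1}\mathbb{RP}^2$ to $\#_{2m-3}\mathbb{RP}^2$ and decreasing $g_2$ by $6$, thereby reducing to the $(m-1)$-case.

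First, I would apply the operations $(A)$--$(D)$ of Definition~\ref{definition:operation} repeatedly to $K$ until reaching a normal $3$-pseudomanifold $\Delta$ on which no such $g_2$-decreasing operation is available (this is the Step~$1$/Observation~$1$ pattern from the proof of Theorem~\ref{theorem:1-singularity}). If $\Delta$ has no missing tetrahedron then $\Delta\in\mathcal{R}_2$, and Theorem~\ref{theorem:3h} yields $g_2(\Delta)\geq 10+3(2m-1)=6m+7$, contradicting $g_2(\Delta)\leq g_2(K)\leq 6m+6$. Hence $\Delta$ must contain a missing tetrahedron $\sigma$.

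Next, I would split into five types by how $\sigma$ meets $\{t,t_1\}$. Types~$1$, $2$ and $4$ are the analogues of those in the proof of Theorem~\ref{theorem:1-singularity} (respectively: $\sigma$ contains no singular vertex; $\sigma$ contains a single singular vertex $v$ and $\sigma-v$ separates $\lk(v,\Delta)$ into a disc plus remainder; or $\sigma-v$ separates $\lk(v,\Delta)$ into two non-disc portions). Each is handled by a two-singularity analogue of Lemma~\ref{connected sum}, writing $\Delta=\Delta_1\#\Delta_2$ and recursing on the piece(s) containing the singularities (with smaller or equal $m$) while finishing any manifold piece via Proposition~\ref{proposition:BG}. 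Type~$3$ is the vertex-folding case: $\sigma$ contains $t$ and $\sigma-t$ does not separate $\lk(t,\Delta)$. Here Lemma~\ref{lemma:missingtetra2} gives $\Delta=(\Delta')^{\psi}_{t}$ with $g_2(\Delta')=g_2(\Delta)-6$ and $|\lk(t,\Delta')|$ a connected sum of $2m-3$ copies of $\mathbb{RP}^2$, so $\Delta'$ sits in the $(m-1)$-regime of the inductive hypothesis. The new Type~$5$ is the edge-folding case: $\sigma=tt_1ab$ contains both singular vertices and the M\"obius-strip hypotheses of Lemma~\ref{lemma:missingtetra3} hold. Lemma~\ref{lemma:missingtetra3} then produces $\Delta=(\Delta')^{\psi}_{tt_1}$ with $\Delta'$ a normal $3$-pseudomanifold in which both singularities at $t$ and $t_1$ are resolved and $g_2(\Delta')=g_2(\Delta)-3$; in the base case $m=1$ this gives $g_2(\Delta')\leq 9$, which Proposition~\ref{proposition:BG} finishes.

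The main obstacle is arguing that at each reduction step a missing tetrahedron of the correct type is present: in the inductive step ($m\geq 2$) a Type~$3$ tetrahedron at $t$ (or a Type~$1$/$2$/$4$ split) must arise before the edge-folding case is invoked, and in the base case ($m=1$) Type~$5$ must be available. Vertex unfolding at $t_1$ would demand that $|\lk(t_1,\Delta')|$ have Euler characteristic $3$, which is impossible; edge folding invoked while $|\lk(t,\Delta)|$ still carries several crosscaps would force $|\lk(t,\Delta')|$ to be an odd-crosscap surface paired through a M\"obius identification, again impossible. These parity and $\chi$-bookkeeping obstructions, together with the inequality of Theorem~\ref{theorem:3h} applied to every reduced sub-complex, pin down the correct type at each stage. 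For sharpness, I would adapt Remark~\ref{remark:sharp1}: take a manifold $\Delta_0$ obtained from connected sums of boundary complexes of $4$-simplices followed by a single handle addition, so $g_2(\Delta_0)=10$; for $1\leq i\leq m-1$ let $\Delta_i$ be a vertex-folded boundary complex producing a Klein-bottle link at $v_i$ with $g_2(\Delta_i)=6$; and let $\Delta_m$ be an edge-folded boundary complex producing two $\mathbb{RP}^2$ singularities at $u_m,v_m$ with $g_2(\Delta_m)=3$. Successive connected sums identifying $v_1,\ldots,v_{m-1},u_m$ to a common vertex $t$ yield a normal $3$-pseudomanifold with singular links $\#_{2m-1}\mathbb{RP}^2$ and $\mathbb{RP}^2$ and with $g_2=10+6(m-1)+3=6m+7$; since the construction requires the forbidden handle addition, the resulting complex cannot arise from the permitted operations, establishing sharpness of the bound $6+6m$.
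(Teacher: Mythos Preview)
Your overall architecture matches the paper's: reduce by operations $(A)$--$(D)$, invoke Theorem~\ref{theorem:3h} to force a missing tetrahedron, classify the tetrahedron by how it meets $\{t,t_1\}$, and recurse. The sharpness construction is also essentially the paper's. However, there is a genuine gap in your case analysis.

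Your claim that edge folding (your Type~$5$) is ``impossible'' for $m\geq 2$ is wrong, and the parity obstruction you sketch does not hold. Suppose $\sigma=tt_1ab$ is a missing tetrahedron with a M\"obius neighborhood of $\partial(\sigma-t_1)$ in $\lk(t_1,\Delta)$, so Lemma~\ref{lemma:missingtetra3} applies and $\Delta=(\Delta')^\psi_{tt_1}$. Cutting $|\lk(t_1,\Delta)|\cong\mathbb{RP}^2$ along a one-sided curve and capping gives $\mathbb{S}^2$, so $t_1$ becomes non-singular in $\Delta'$. Cutting $|\lk(t,\Delta)|\cong\#_{2m-1}\mathbb{RP}^2$ along a one-sided curve and capping gives $\#_{2m-2}\mathbb{RP}^2$, which is a connected sum of $m-1$ Klein bottles --- a perfectly good closed surface, not an ``odd-crosscap surface''. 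Thus $\Delta'$ has exactly one singularity at $t$ with $|\lk(t,\Delta')|$ a connected sum of $m-1$ Klein bottles, and $g_2(\Delta')=g_2(\Delta)-3\leq 6m+3=9+6(m-1)$. This is precisely the hypothesis of Theorem~\ref{theorem:1-singularity}, which finishes the argument immediately. The paper does exactly this: its Type~$3$ (your Type~$5$) is allowed at every $m$, and whenever it occurs one jumps straight to Theorem~\ref{theorem:1-singularity} rather than continuing the two-singularity induction. Your attempt to forbid edge folding for $m>1$ and force all vertex foldings to occur first is both incorrect and unnecessary.

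A secondary looseness: your Types~$2$--$4$ are phrased for ``a single singular vertex $v$'', but $\sigma$ may contain both $t$ and $t_1$. The paper's Types~$2$, $4$, $5$ explicitly allow $t_1\in\sigma$ with the extra condition that $\partial(\sigma-t_1)$ separates $\lk(t_1,\Delta)$; this is needed for Lemmas~\ref{connected sum and handle addition} and~\ref{lemma:missingtetra2} to apply. You should make that explicit, and note that when $t_1\in\sigma$ and $\partial(\sigma-t_1)$ does \emph{not} separate $\lk(t_1,\Delta)$, the neighborhood is automatically a M\"obius strip (since $|\lk(t_1)|\cong\mathbb{RP}^2$), landing you in the edge-folding case.
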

\begin{proof}
Let $\Delta$ be a  normal $3$-pseudomanifold with exactly two singularities  at $t$ and $t_1$ such that $|\lk (t,\Delta)|$ is a connected sum of $(2k-1)$ copies of $\mathbb{RP}^2$ and $|\lk (t_1,\Delta)|\cong \mathbb{RP}^2 $ for some positive integer $k$. Then  $g_2(\lk (t, K))=6k-3$. Let $g_2{(\Delta)} \leq 6+6k$.  We have the following observation:

\begin{enumerate}

\item [Observation $1$:] Let $\bar{\Delta}$ (may be $\Delta$ itself) be a  normal 3-pseudomanifold obtained from $\Delta$ by the repeated applications of the combinatorial operations mentioned in Remark \ref{remark:operation}, such that there is no normal 3-pseudomanifold $\Delta'$ which is obtained from $\bar \Delta$ by a combinatorial operation mentioned in Remark \ref{remark:operation} and $g_2(\Delta')< g_2(\bar \Delta)$. If $\bar \Delta$ has no missing tetrahedron, then $\bar \Delta\in \mathcal{R}_2$ and hence by Theorem \ref{theorem:3h}, $g_2(\bar \Delta)\geq  (6k-3)+10=6k+7$. This contradicts the given condition. Therefore, $\bar \Delta$ must have a missing tetrahedron.
\end{enumerate}

There can be five types of missing  tetrahedra in $\Delta$:

\begin{enumerate}
\item [Type $1$:] Let $\sigma$ be a missing tetrahedron in $\Delta$ such that $t$ and $t_1$ are not vertices  of $\sigma$.

\item [Type $2$:] Let $\sigma$ be a missing tetrahedron in $\Delta$ such that $t\leq \sigma$ and $\lk (t,\Delta)$ is separated into two portions by the missing triangle formed by the other three vertices of $\sigma$, where one portion is a disc. If $t_1\leq \sigma$, then $\lk (t_1,\Delta)$ is separated into two portions by the missing triangle formed by the other three vertices of $\sigma$.

\item [Type $3$:] Let $\sigma$ be a missing tetrahedron in $\Delta$ such that $t,t_1\leq \sigma$ and $\lk (t_1,\Delta)$ is not separated into two portions by the missing triangle formed by the other three vertices. Then
a small neighborhood of $|\partial(\Delta[V(\sigma)\setminus \{t_1\}])|$ in $|\lk (t_1,\Delta)|$ is a M\"{o}bius strip, and it follows from Lemma \ref{lemma:missingtetra3}
that  a small neighborhood of $|\partial(\Delta[V(\sigma)\setminus \{t\}])|$ in $|\lk (t,\Delta)|$ is also a M\"{o}bius strip. Further, there exists a  normal $3$-pseudomanifold  $\Delta'$ such that  $\Delta = (\Delta')^\psi_{tt_1}$ is obtained from an edge folding at $tt_1 \in \Delta'$. Therefore, $\Delta'$ has exactly one singularity, say $t$, such that $(i)$ $|\lk (t,\Delta')|$ is a connected sum of $(k-1)$ copies of tori or Klein bottles, and $(ii)$  $g_2(\Delta')\leq 6(k-1)+9$. It follows from Theorem \ref{theorem:1-singularity} that $\Delta'$ is obtained from some boundary complexes of $4$-simplices by a sequence of operations of types connected sums,  bistellar $1$-moves, edge contractions, edge expansions, and vertex foldings.

\item [Type $4$:] Let $\sigma$ be a missing tetrahedron in $\Delta$ such that $t\leq \sigma$ and  $\lk (t,\Delta)$ is not separated into two portions by the missing triangle formed by the other three vertices of $\sigma$.  If $t_1\leq \sigma$, then $\lk (t_1,\Delta)$ is separated into two portions by the missing triangle formed by the other three vertices of $\sigma$.

\item [Type $5$:] Let $\sigma$ be a missing tetrahedron in $\Delta$ such that $t\leq \sigma$ and  $\lk (t,\Delta)$ is separated into two portions by the missing triangle formed by the other three vertices of $\sigma$, where no portions are triangulated discs. If $t_1\leq \sigma$, then $\lk (t_1,\Delta)$ is separated into two portions by the missing triangle formed by the other three vertices of $\sigma$.
\end{enumerate}

Now, we are ready to prove our result. 	Let $K$ be a  normal $3$-pseudomanifold with exactly two singularities  at $t$ and $t_1$ such that $|\lk (t)|$ is a connected sum of $(2m-1)$ copies of $\mathbb{RP}^2$, $|\lk (t_1)|\cong \mathbb{RP}^2 $ and $g_2{(K)} \leq 6+6m$ for some positive integer $m$.

\begin{enumerate}
\item [Step $1$:] Let $\Delta$ be a  normal 3-pseudomanifold obtained from $K$ by repeated applications of the combinatorial operations mentioned in Remark \ref{remark:operation} such that there is no normal 3-pseudomanifold $K'$ which is obtained from $\Delta$ by a combinatorial operation mentioned in Remark \ref{remark:operation} and $g_2(K')< g_2(\Delta)$. Then by Observation 1, $\Delta$ must have a missing tetrahedron.

\item [Step $2$:] Let $\Delta$ have a missing tetrahedron of Type 1 or 2. Then it follows from Lemma \ref{connected sum} that $\Delta$  is formed using a connected sum of $\Delta_1$  and $\Delta_2$. Let $t\in \Delta_1$ (in case of Type 2, we can take $t$ as a vertex in $\Delta_1$ such that $|\lk (t,\Delta_1)|$ is a connected sum of $(2m-1)$ copies of $\mathbb{RP}^2$). Then $g_2(\Delta_1)\geq g_2(\lk (t,\Delta_1))\geq 3(2m-1)$ and $t_1\in \Delta_1$, where $|\lk (t_1)|\cong \mathbb{RP}^2$.  Therefore, $g_2(\Delta_2)=g_2(\Delta)-g_2(\Delta_1)\leq 9$. Thus, after a finite number of steps, we have  $\Delta=\Delta_1\#\Delta_2\#\cdots\#\Delta_n$, where $(i)$ $t,t_1\in \Delta_1$ such that $|\lk (t)|$ is a connected sum of $(2m-1)$ copies of $\mathbb{RP}^2$ and $|\lk (t_1)|\cong \mathbb{RP}^2$ $(ii)$ $\Delta_1$ has no missing tetrahedron of Types 1 and 2, $(iii)$ $6m-3 \leq g_2(\Delta_1)\leq 6m+6$, and $(iv)$ for $2\leq i\leq n$, $\Delta_i$ has no singular vertices and $g_2(\Delta_i)\leq 9$. Thus, by Proposition \ref{proposition:BG} we have, for $2\leq i\leq n$, each $\Delta_i$ is a triangulated $3$-sphere and is obtained from some boundary complexes of $4$-simplices by a sequence of operations of types connected sums,  bistellar $1$-moves, edge contractions, and edge expansions. If  there is no normal 3-pseudomanifold $\Delta_1'$, which is obtained from $\Delta_1$ by a combinatorial operation mentioned in Remark \ref{remark:operation} and $g_2(\Delta_1')< g_2(\Delta_1)$, then by Observation 1, $\Delta_1$ has a missing tetrahedron of Type 3, 4 or 5, and we move to Steps 3, 4 or 5, respectively. Otherwise, we move to Step 1 and replace $K$ with $\Delta_1$. Since $K$ has a finite number of vertices and $g_2(K)$ is also finite, after a finite number of steps we must move to Steps 3, 4 or 5.

\item [Step $3$:]  Let $\Delta_1$ have a missing tetrahedron of Type 3. From the above arguments for Type 3, we get our result.
\end{enumerate}

We use the principle of mathematical induction on $m$. First, we take $m=1$. In this case, a missing tetrahedron will be of Type 1, 2 or 3 only. Therefore, from Step 2 we must move to Step 3 only. Thus, the result is true for $m=1$. Let us assume that the result is true for $1,\dots,m-1$, and let $K$ be the  normal $3$-pseudomanifold that corresponds to $m$. Then we start from Step 1, and  after a finite number of steps we must move to Steps 3, 4 or 5. If we move to Step 3, then we are done. We can use the induction hypothesis if we move to either Step 4 or Step 5.

\begin{enumerate}
\item [Step $4$:]  Let $\Delta_1$ have a missing tetrahedron of Type 4. Then it follows from  Lemma \ref{lemma:missingtetra2} that $\Delta_1$  is formed using a vertex folding from a  normal 3-pseudomanifold  $\Delta_1'$ at $t\in \Delta_1'$ and  $g_2(\Delta_1')=g_2(\Delta_1)-6$. Here $|\lk (t,\Delta_1')|$ is a connected sum of $2m-3$ copies of $\mathbb{RP}^2$ and $g_2(\Delta_1')\leq 6+6(m-1)$. Now, the result follows by the  induction hypothesis.

\item [Step $5$:] Let $\Delta_1$ have a missing tetrahedron $\sigma$ of Type 5. Then it follows from Lemma \ref{connected sum} that $\Delta_1$  is formed using a connected sum of $\Delta_1'$  and $\Delta_1''$. Let $t'\in \Delta_1'$ and $t''\in \Delta_1''$ be identified during the connected sum and produce $t \in \Delta_1$. Without loss of generality, assume  $t_1\in \Delta_1'$ (if $t_1\leq \sigma$, then we can take $t_1$ as a vertex in $\Delta_1'$ such that $|\lk (t_1,\Delta_1')|\cong \mathbb{RP}^2$). Then $|\lk (t',\Delta_1')|$  is a connected sum of $2n_1-1$ copies of $\mathbb{RP}^2$ and $|\lk (t'',\Delta_1'')|$ is  the connected sum of $n_2$ copies of tori or Klein bottles, where $n_1+n_2=m$ for some positive integers $n_1, n_2$. Since $n_1,n_2>0$, both $n_1,n_2<m$. Further, $g_2(\Delta_1')\leq 6+6n_1$ and $g_2(\Delta_1'')\leq 9+6n_2$. Now, the result follows by the induction hypothesis and Theorem \ref{theorem:1-singularity}.
\end{enumerate}

From the construction, we can see that the sequence of operations includes exactly $m-1$ number of vertex foldings, one edge folding, and a finite number of remaining operations. Further, the upper bound in Theorem \ref{theorem:2-singularity} is sharp, i.e., there exists a  normal $3$-pseudomanifold with exactly two singularities $\mathbb{RP}^2$ and $\#_{(2m-1)}\mathbb{RP}^2$  such that $g_2{(K)} = 7+6m$ and $K$ is not obtained from some boundary complexes of $4$-simplices by a sequence of operations of types connected sums,  bistellar $1$-moves, edge contractions, edge expansions, vertex foldings, and edge foldings. We can
construct a normal $3$-pseudomanifold $\Delta$ as in Remark \ref{remark:sharp1}, where  $|\lk (t,\Delta)|$ is  the connected sum of $m-1$ copies of tori or Klein bottles, and then apply an edge folding at some edge $ta$. Then the  normal $3$-pseudomanifold $\Delta^\psi_{ta}$ will serve  our purpose.
\end{proof}

\bigskip

\noindent {\em Proof of Theorem} \ref{main theorem}. 
Let $K$ have exactly one singularity at $t$. Then $|\lk (t,K)|$ is a connected sum of $n$ copies of tori or Klein bottles for some $n\in \mathbb{N}$, and $g_2(\lk (t, K))=6n$.
Since $g_2(K) \leq g_2(\lk (v)) + 9$ for some vertex $v$ in $K$ and $g_2(\lk (v))  \leq g_2(\lk (t)) $, we have $g_2(K) \leq 6n+ 9$.  Therefore, the result follows from Theorem \ref{theorem:1-singularity}. 

Now consider, $K$ has exactly two singularities  at $t$ and $t_1$ such that $|\lk (t_1, K)|\cong \mathbb{RP}^2$. Then $|\lk (t, K)|$ is a connected sum of $(2m-1)$ copies of $\mathbb{RP}^2$ for some $m\in \mathbb{N}$ and  $g_2(\lk (t, K))=3(2m-1)$. Since $g_2(K) \leq g_2(\lk (v)) + 9$ for some vertex $v$ in $K$ and $g_2(\lk (v))  \leq g_2(\lk (t)) $, we have $g_2(K) \leq 6m+6$. Thus, the result follows from Theorem \ref{theorem:2-singularity}.

The sharpness of this bound follows from Remark \ref{remark:sharp1} and Theorem \ref{theorem:2-singularity}. \hfill $\Box$

\medskip

\noindent {\bf Acknowledgement:} The author would like to thank the anonymous referees for many useful comments and suggestions.
The first author is supported by Science and Engineering Research Board (CRG/2021/000859). The second author is supported by CSIR (India). The third author is supported by Prime Minister's Research Fellows (PMRF) Scheme.
  
  {

\end{document}